\newtheorem*{thm1}{Theorem 1}
\newtheorem*{thm2}{Theorem 2}
\begin{document}

\begin{titlepage}

\title{\textbf{Derived smooth stacks and prequantum categories}}
\author{James Wallbridge\\
Kavli IPMU (WPI), UTIAS, University of Tokyo\\
5-1-5 Kashiwanoha, Kashiwa, 277-8583, Japan\\
james.wallbridge@ipmu.jp}
\date{{\today}}
\maketitle

\begin{center}
\textbf{Abstract}
\end{center}

The Weil-Kostant integrality theorem states that given a smooth manifold endowed with an integral complex closed 2-form, then there exists a line bundle with connection on this manifold with curvature the given 2-form.  It also characterises the moduli space of line bundles with connection that arise in this way.  This theorem was extended to the case of $p$-forms by Gajer in \cite{Ga}.  In this paper we provide a generalization of this theorem where we replace the original manifold by a derived smooth Artin stack.  Our derived Artin stacks are geometric stacks on the \'etale $\infty$-site of affine derived smooth manifolds.  We introduce the notion of a $n$-shifted $p$-preplectic derived smooth Artin stack in analogy with the algebraic case constructed by Pantev-To\"en-Vaqui\'e-Vezzosi in \cite{PTVV}.  This is a derived smooth Artin stack endowed with a complex closed $(p+1)$-form which has been cohomologically shifted by degree $n$.  It is a far reaching generalization of a $p$-preplectic manifold which includes orbifolds and other highly singular objects.  We then show that when its $n$-shifted $p$-preplectic form is integral, then there exists a $(p+n-1)$-gerbe with $p$-connection data and curvature corresponding to the original $p$-preplectic form.  We also provide the characterization of the moduli stack of gerbes with connections arising in this context.  We construct a canonical functor from the $(\infty,1)$-category of integral $n$-shifted $p$-preplectic derived smooth Artin stacks to the $(\infty,1)$-category of linear $(\infty,p+n-1)$-categories.  When $n=0$ and $p=1$, this functor can be thought of like a cohomology functor in that it associates to a derived presymplectic smooth Artin stack a linear invariant in the form of a differential graded module.  In the general case we obtain higher prequantum categories which requires the machinery of linear $(\infty,n)$-categories.
\end{titlepage}


\tableofcontents

\section*{Introduction}

In this paper we construct linear invariants of certain derived stacks of a smooth nature.  Our first task is to prove a version of the Weil-Kostant integrality theorem in the setting of derived smooth geometry.  The Weil-Kostant integrality theorem states that given a manifold endowed with an integral complex closed 2-form, then there exists a line bundle with connection on this manifold such that the curvature of this connection coincides with the original 2-form.  It also characterises the moduli space of line bundles with connection that arise in this way.  

More precisely we have the following theorem (see for example \cite{Br} or \cite{We}\cite{Ko} for the original references).

\begin{thm1}[Weil-Kostant]
Let $X$ be a smooth manifold endowed with a complex closed $2$-form $\omega$.  
\begin{enumerate}
\item If $\omega$ is integral, ie. the class $[\omega]$ lies in the image of the map 
\[  H^2(X,\bb{Z}(1))\ra H^2(X,\bb{C}), \]
where $\bb{Z}(1):=(2\pi\sqrt{-1})\cdot\bb{Z}$, then there exists a pair $(L,\theta)$ consisting of a line bundle $L$ on $X$ with connection $\theta$ such that $\omega$ is the curvature of $\theta$.
\item The set of isomorphism classes of pairs $(L,\theta)$ with curvature $\omega$ form a torsor for the group $H^1(X,\bb{C}^*)$ of isomorphism classes of flat line bundles over $X$.
\end{enumerate}
\end{thm1} 

This theorem was generalized to integral complex closed $p$-forms by Gajer in \cite{Ga}.  However, many objects of interest to us, for example non-transverse intersections of manifolds or quotients of a manifold by a Lie group with nonfree action, are not contained in these theorems.  We need to introduce a category of more general objects which includes these examples and which contains the category of manifolds as a full subcategory.  The first objective of this article is to provide a proof of this theorem in the case where our objects are derived smooth Artin stacks.  

We start by embedding the category of manifolds into the category of \textit{derived manifolds}.  These are a generalization of the quasi-smooth derived manifolds introduced in \cite{Sp1}.  One can think of a derived $k$-manifold, where $k$ is the field of real or complex numbers, as a dg-ringed topological space with extra structure, ie. a topological space endowed with a sheaf of commutative differential graded $k$-algebras with structure enabling one to ``compose with smooth functions" ($\cC^\infty$ or holomorphic functions) which is moreover, locally given by a finite limit of $k$-manifolds.  The foundations of the general theory of structured spaces was laid out in \cite{LV}.  

In considering derived $k$-manifolds instead of ordinary $k$-manifolds we gain, in addition to the inclusion of far more general spaces, better formal properties of the $\infty$-category of such objects.  For example, the $\infty$-category of derived $k$-manifolds is closed under finite limits.  The finite limits in the category of $k$-manifolds that are correct, for example transverse intersections, are preserved by the fully faithful functor from $k$-manifolds to derived $k$-manifolds.

Still further examples are not contained in the $\infty$-category of derived $k$-manifolds.  For example the category of derived $k$-manifolds is not closed under arbitrary colimits and we would like to include possibly singular quotients of manifolds in our theorem.  We build from the $\infty$-category of derived $k$-manifolds the notion of a derived $k$-smooth stack.  A derived $k$-smooth stack will be defined as a sheaf of spaces on the $\infty$-site of affine derived $k$-manifolds with respect to the \'etale topology.  An affine derived $k$-manifold is a local model for a derived $k$-manifold.

We show that the $\infty$-site of affine derived $k$-manifolds with the \'etale topology is subcanonical by showing that the presheaf of $\infty$-categories sending an affine derived $k$-manifold to its $\infty$-category of modules is a sheaf of $\infty$-categories.  As usual, objects in the essential image of the Yoneda embedding from the $\infty$-category of affine derived $k$-manifolds to the $\infty$-category of derived $k$-smooth stacks will be called affine derived $k$-smooth stacks. 

We then define what it means for a derived $k$-smooth stack to be Artin.  One can roughly think of a derived $k$-smooth Artin stack as a presheaf of spaces on the $\infty$-category of affine derived $k$-manifolds which is a sheaf for the \'etale topology and which is locally representable by an affine derived $k$-manifold with respect to the smooth topology.  This uses the theory of geometries outlined in \cite{TVII} which we recall.  We also discuss examples of derived $k$-smooth Artin stacks.  They can be presented as quotients by derived Lie groupoid actions.  The main reason for restricting to the collection of Artin stacks is that it includes all the examples of interest to us whilst guaranteeing the existence of a cotangent complex for such objects.  This is necessary for studying presymplectic geometry in our context in the subsequent sections.  

In summary, we will introduce three $\infty$-categories which lie to the right of the category of $k$-manifolds in a chain of inclusions
\[   \bMan_k\subset\dMan_k\subset\dSmAr_k\subset\dSmSt_k   \]
with obvious notation, in order to deal with examples whose structure is inaccessible from the first category.  This chain can be compared with the algebraic setting from smooth varieties and derived schemes up to derived (Artin) stacks.  A similar chain holds in the complex analytic setting.  From the discussion above, derived $k$-smooth Artin stacks often arise as solutions to derived moduli problems in the smooth setting.  

The analogue of a smooth manifold endowed with a complex closed $(p+1)$-form, or what one may call a $p$-preplectic manifold, in our setting is a derived $n$-shifted $p$-preplectic smooth Artin stack.  This is a derived smooth Artin stack (over $\bb{R}$) endowed with a complex closed $(p+1)$-form that has been cohomologically shifted by degree $n$.  In the case where the derived $n$-shifted $p$-preplectic smooth Artin stack is simply a smooth manifold endowed with a zero shifted 2-form, we recover the theory of presymplectic $k$-manifolds.  However, zero shifted $p$-forms exist on spaces containing singularities and so our definition is a natural extension of $p$-preplectic structures and can be utilized in many more general examples.  

We also define what it means for a complex closed $p$-form on a derived smooth Artin stack to be integral.  In analogy with the standard definition, it will mean that its cohomology class is the image of an integral class.  

The first main result of this paper, a derived version of the Weil-Kostant integrality theorem, is stated as follows (see Section~\ref{dwkit}, Theorem~\ref{mainthm}).

\begin{thm1}
Let $(X,\omega)$ be a derived $n$-shifted $p$-preplectic smooth Artin stack.  
\begin{enumerate}
\item There exists a $(p+n-1,p)$-gerbe on $X$ with curvature $\omega$ if and only if $\omega$ is integral.
\item The space of $(p+n-1,p)$-gerbes on $X$ with curvature $\omega'$ is parametrized by the space of flat $(p+n-1,p)$-gerbes.
\end{enumerate}
\end{thm1} 

A $(p,q)$-gerbe on a derived smooth stack is a $p$-gerbe with $i$-connections, where $i$ ranges from $1$ to $q$, on the derived smooth stack (see Definition~\ref{gerbeonstack}).  We also use the terminology $p$-gerbe with $q$-connection data.  It is an extension of the notion of a complex line bundle with connection on a smooth manifold.  Our theorem subsumes the classical Weil-Kostant integrality theorem, reconstructing it when the derived smooth Artin stack is an integral ($0$-shifted) $1$-preplectic $k$-manifold.  In this case a $(0,1)$-gerbe is simply a complex line bundle with connection.  When our derived smooth stack is a (singular) smooth space endowed with an integral complex closed $2$-form, this $(0,1)$-gerbe is understood as a complex line bundle with connection in a \textit{derived} sense, ie. the line bundle is a bundle of complexes of $k$-modules which encodes how the form differs from being smooth.  The notion of derived geometry arising when one deals with singular spaces is well known and exemplified in this result.

One application of the classical Weil-Kostant integrality theorem is to studying linear invariants of smooth spaces.  Classically, linear invariants of integral presymplectic smooth manifolds arise by considering the complex vector space of sections of the complex line bundle arising from the Weil-Kostant integrality theorem.  This construction is used for example in the theory of geometric quantization, where it is sometimes referred to as the associated prequantum vector space, and is useful in understanding the quantization of classical mechanical systems.  The functor which associates to an integral presymplectic smooth manifold a prequantum vector space is like a cohomology functor but satisfies different functorial properties.  

Similarly, one of the main utilities of the derived Weil-Kostant integrality theorem is to the subject of \textit{derived} geometric quantization.  This is useful in understanding the quantization of classical field theories in an extended sense, ie. as extended quantum field theories in which higher categorical data is associated to manifolds of greater codimension than one.  In this case, the prequantization functor supplies, in addition to a prequantum vector space (or more generally, a prequantum complex of vector spaces) certain prequantum linear higher categories.   

The collection of pairs $(X,\omega)$ consisting of a derived smooth Artin stack together with an integral $n$-shifted complex closed $(p+1)$-form $\omega$ on $X$ form an $\infty$-category which we denote by $p\mbox{-}\PrPlAr_{n}^{\tu{in}}$.  The objects in this $\infty$-category will be called integral $n$-shifted $p$-preplectic derived smooth Artin stacks.  The collection of $\bb{C}$-linear $(\infty,m)$-categories also form an $\infty$-category denoted $m\mbox{-}\bLin$.  Using the derived Weil-Kostant integrality theorem we prove the following second main result of this paper (see Section~\ref{pqc}, Corollary~\ref{maincor}).   

\begin{thm2}\label{mainthm2}
Let $p>0$, $n\in\bb{Z}$ and $(p+n)>0$.  There exists a prequantum functor
\[    \sP_n^p:p\mbox{-}\PrPlAr_{n}^{\tu{in}}\ra (p+n-1)\mbox{-}\bLin \]
of complex linear $(\infty,p+n-1)$-categories.
\end{thm2}

The application of this theorem to derived Artin stacks arising from moduli problems in classical field theory will appear elsewhere.

\section*{Notation}

An $\infty$-category will refer to an $(\infty,1)$-category, the theory of which is contained in \cite{L1} and \cite{Si}.  The opposite of an $\infty$-category $C$ will be denoted $C^\circ$.  Appendix~A contains a summary of the theory of $\inftyn$-categories, based on the approach in \cite{Si}, which is sufficient for our purposes.  More advanced structures in the formal theory of $\inftyn$-categories, needed in the main text, have been relegated to Appendix~B.

Given a simplicial model category $\sM$, the $\infty$-category arising from the localization of $\sM$ with respect to its class of weak equivalences will always be denoted $L(\sM)$.  In the setting of \cite{Si}, this corresponds to the Dwyer-Kan simplicial localization and in the setting of \cite{L1}, it corresponds the homotopy coherent nerve of the category $\sM^{f/c}$ of fibrant-cofibrant objects in $\sM$.

All $\infty$-categories of special note will be written in boldface.  In particular, we let $\textbf{S}$ denote the $\infty$-category
\[  \textbf{S}:=L(\tu{sSet})\xras L(\Top)=:\bTop  \] 
of spaces given by the localization of the category $\sSet$ of simplicial sets endowed with the Kan model structure.  This is equivalent to the $\infty$-category $\bTop$ of topological spaces where the category $\Top$ of topological spaces is endowed with its standard model structure.  The $\infty$-category of functors between two objects $C$ and $D$ in the model category of $\infty$-categories will be denoted $\bFun(C,D)$.

Finally, questions about categorical \textit{size} will be neglected throughout and can be addressed through the implementation of universes.

\section*{Acknowledgements}

The results of the derived Weil-Kostant integrality theorem were presented at the CATS 4 conference (Luminy, 2 - 7 July 2012).  I would like to thank the organisers for the invitation to speak.  Special thanks to Bertrand To\"en for many helpful discussions.  I also thank Jacob Lurie and Mauro Porta for correspondence related to this work.  This research was partly carried out at Harvard University and Max Planck Institute for Mathematics and partially supported by the World Premier International Research Center Initiative (WPI), MEXT, Japan.

\section{Derived manifolds}\label{dm}

Throughout this article we will fix $k\in\{\bb{R},\bb{C}\}$ to be the field of real or complex numbers.  We will call a function $\bb{R}$-smooth if it is of class $\cC^\infty$ and $\bb{C}$-smooth if it is holomorphic.  We will speak of $k$-smooth functions in general.  In this section we introduce the notion of derived $k$-manifold using the theory of structured spaces contained in \cite{LV}.

Every smooth or complex manifold has a description as a locally ringed space.  Let $V$ be an open subset of $k^n$ and $\cO_V$ the sheaf of $k$-smooth functions on $V$.  A pair $(|M|,\cO^\disc_M)$ consisting of a topological space $|M|$ (here assumed to be Hausdorff and second countable) together with a sheaf $\cO^\disc_M$ of commutative $k$-algebras on $|M|$ will be called a \textit{$k$-manifold} if for every point $x$ in $|M|$, there exists a neighborhood $U$ of $x$ in $|M|$ and a map
\[    (U,\cO^\disc_M|U)\ra(V,\cO_V)    \]
of ringed spaces such that the induced map $f:U\ra V$ is a homeomorphism of $U$ onto $V$ and the map 
\[  f^\#:\cO_V\ra f_*(\cO^\disc_M|U)   \]
is an isomorphism of sheaves.  Let $\Man_k$ denote the full subcategory of the category of ringed spaces over $k$ spanned by $k$-manifolds.

In order to define the notion of a derived $k$-manifold, it is not enough to replace the sheaf of $k$-algebras with a sheaf of ``derived $k$-algebras", for example a sheaf of simplicial commutative $k$-algebras.  One needs to include the smooth structure by considering simplicial $k$-smooth rings.  A convenient formal setting in which to consider such objects is within the theory of pregeometries found in \cite{LV}.     

Let the category $\Man_k$ of $k$-manifolds be endowed with the Grothendieck topology which is generated by locally homeomorphic (in the $k=\bb{R}$ case) or locally biholomorphic maps (in the $k=\bb{C}$ case).  For ease of notation, we follow \cite{LV} and refer to these generating maps as \textit{admissible}.  A collection $\{N_i\ra M\}$ of admissible maps generates a covering sieve on $M$ if and only if, for all $x\in M$, some inverse image $N_i\times_M\{x\}$ is nonempty.  

\begin{dfn}\label{pkstructure}
Let $\sM$ be a model category.  A functor $\cO:\Man_k\ra\sM$ is said to be a \textit{$k$-manifold structure} on $\sM$ if it satisfies the following conditions~:
\begin{enumerate}
\item The functor $\cO$ preserves finite products.
\item The functor $\cO$ preserves pullback diagrams along admissible maps.
\end{enumerate}
\end{dfn}

Let $\Str_{\Man_k}(\sM)$ denote the full subcategory of $L(\sM^{\Man_k})$ spanned by $k$-manifold structures.  Note that we have an equivalence
\[   L(\sM^{\Man_k})\xras\bFun(\Man_k,L(\sM))  \] 
of $\infty$-categories.

\begin{dfn}\label{local}
Let $\sM$ be a model category.  A $k$-manifold structure on $\sM$ is said to be \textit{local} if for any collection of admissible maps $\{N_i\ra M\}$ which generates a covering sieve, the induced map 
\[   \coprod_i\cO(N_i)\ra\cO(M)   \] 
is an effective epimorphism in $\sM$. 
\end{dfn}

This definition means that the $\check{\tu{C}}$ech nerve of the induced map is a simplicial resolution of $\cO(M)$ or equivalently, it is an effective epimorphism in the underlying homotopy category $\h(\sM)$.  

Let $\Str^\loc_{\Man_k}(\sM)$ denote the subcategory of $\Str_{\Man_k}(\sM)$ consisting of local $k$-manifold structures $\cO$ on $\sM$ and maps $f:\cO\ra\cO'$ of $k$-manifold structures on $\sM$ satisfying the following condition~: for all admissible morphisms $N\ra M$ in $\Man_k$, the diagram
\begin{diagram}
\cO(N)  &\rTo  &\cO'(N)\\
\dTo     &        &\dTo\\
\cO(M)  &\rTo  &\cO'(M)
\end{diagram}
is a pullback in $\sM$.  

\begin{construction}\label{constr}
Let $\sN$ be a (simplicial) model category and consider the functor 
\[   \Sh_\sN:\Top^\circ\ra\PC(\sSet)   \]
sending $|A|$ to the model category $\Sh_{\sN}(A)$ of $\sN$-valued sheaves on $|A|$.  Here $\PC(\sSet)$ is the model category of $(\infty,1)$-precategories (the notation of which is explained in Appendix~A).  We denote by $\int_{\Top}\Sh_\sN\ra\Top$ the fibered $\infty$-category classified by the functor $\Sh_{\sN}$.  

Let $\bTop_{\sN}(\Man_k)$ (resp. $\bTop^\loc_{\sN}(\Man_k)$) denote the opposite of the subcategory of 
\[    \bFun(\Man_k,\int_{\Top}\Sh_{\sN})\times_{\bFun(\Man_k,\bTop)}\bTop  \]
whose objects are (local) $k$-manifold structured spaces $(|A|,\cO_A)$ on $\Sh_{\sN}(A)$, and a map 
\[   (|A|,\cO_A)\ra(|B|,\cO_B)   \] 
is given by a map $f:|A|\ra |B|$ of topological spaces together with a (local) natural transformation $f^*\cO_B\ra\cO_A$ of $k$-manifold structures on $\Sh_{\sN}(A)$.  
\end{construction}

\begin{dfn}
Let $|A|$ be a topological space.  A \textit{discrete (local) $k$-manifold structure on $|A|$} is a (local) $k$-manifold structure on the trivial model category $\Sh_{\Set}(A)$ of sheaves of sets on $|A|$.
\end{dfn}

The $\infty$-category of discrete $k$-manifold structures on $|A|$ will be denoted 
\[  \Str^{\disc}_{\Man_k}(A):=\Str_{\Man_k}(\Sh_{\Set}(A)) \] 
and discrete local $k$-manifold structures on $|A|$ by $\Str^{\loc,\disc}_{\Man_k}(A):=\Str_{\Man_k}^\loc(\Sh_{\Set}(A))$.  

A pair $(|A|,\cO^\disc_A)$ consisting of a topological space $|A|$ together with a discrete (local) $k$-manifold structure $\cO_A^\disc$ on $|A|$ will be called a \textit{discrete (local) $k$-manifold structured space}.  The $\infty$-category of discrete (local) $k$-manifold structured spaces will be denoted $\bTop_{\Set}(\Man_k)$ and $\bTop^\loc_{\Set}(\Man_k)$ respectively.

\begin{ex}
When $k=\bb{R}$, the $\infty$-category $\bTop_{\Set}(\Man_k)$ is closely related to the category of $\cC^\infty$-ringed spaces.  See for example \cite{Du} for backround on $\cC^\infty$-ringed spaces.  In particular, when $|A|=*$, the $\infty$-category $\Str^{\disc}_{\Man_k}(*)$ is equivalent to the category of $\cC^\infty$-rings which preserve pullbacks of admissible maps. More generally, we call 
\[  \textbf{SmRng}_k:=\Str^{\disc}_{\Man_k}(*)  \] 
the $\infty$-category of \textit{$k$-smooth rings}.  One similarly has an $\infty$-category $\textbf{SmRng}_k^\loc:=\Str^{\loc,\disc}_{\Man_k}(*)$ of local $k$-smooth rings.
\end{ex}

\begin{notn}\label{notn1}
Let $C$ be an $\infty$-category.  If $X$ is a locally presentable $\infty$-category, we will denote by 
\[   \textbf{\bPr}_{X}(C):=\bFun(C^\circ,X)   \] 
the $\infty$-category of $X$-valued presheaves on $C$.  When $C$ is endowed with a Grothendieck topology $\tau$, we will refer to the pair $(C,\tau)$ as a \textit{$\infty$-site} and $\textbf{Sh}_{X}(C,\tau)$ the $\infty$-category of $X$-valued sheaves on $(C,\tau)$.  
A (pre) sheaf valued in the $\infty$-category $\textbf{S}$ of spaces will be called a (pre) stack and the $\infty$-category of (pre) stacks will be denoted $\textbf{Sh}(C,\tau)$ (resp. $\textbf{\bPr}(C)$).  

Recall that when $(D,\tau)$ is a model site and $Y$ is a model category satisfying $C=L(D)$ and $X=L(Y)$, then there exists a natural model category of $Y$-valued sheaves on $D$ and an equivalence
\[   \textbf{Sh}_{X}(C,\tau)\xras L(\Sh_Y(D))  \]
of $\infty$-categories.  For a topological space $|A|$, we will denote by 
\[  \textbf{\Sh}(A):=L(\Sh_{\sSet}(A))  \]
the $\infty$-category of stacks on $|A|$
\end{notn}

A useful lemma is the following~:

\begin{lem}\label{strtosh}
Let $|A|$ be a topological space and $\sM$ a model category.  Then there exists an equivalence
\[  \tu{\Str}_{\Man_k}^\loc(\tu{Sh}_{\sM}(A))\ra\tu{\bSh}_{\tu{\Str}_{\Man_k}^\loc(\sM)}(A)  \]
of $\infty$-categories.
\end{lem}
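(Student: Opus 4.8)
The plan is to deduce the equivalence from an exchange-of-variables argument, matching the defining conditions of $\Str^\loc_{\Man_k}$ on the two sides term by term. Write $\mathrm{Op}(A)$ for the poset of open subsets of $|A|$, so that $\Sh_\sM(A)$ is the model category of $\sM$-valued sheaves on the site $\mathrm{Op}(A)$ and, by the equivalence recorded in the Notation section, $L(\Sh_\sM(A))\simeq\bSh_{L(\sM)}(A)$. First I would invoke the exponential equivalence
\[ \bFun\bigl(\Man_k,\bFun(\mathrm{Op}(A)^\circ,L(\sM))\bigr)\xras\bFun\bigl(\mathrm{Op}(A)^\circ,\bFun(\Man_k,L(\sM))\bigr), \]
which swaps the two functorial variables. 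Since the inclusion of sheaves into presheaves preserves limits, and limits of presheaves are computed sectionwise, a presheaf valued in $\bFun(\Man_k,L(\sM))$ satisfies descent if and only if its evaluation at each $M\in\Man_k$ does. Hence the swap restricts to an equivalence between $\bFun(\Man_k,L(\Sh_\sM(A)))$ and $\bSh_{\bFun(\Man_k,L(\sM))}(A)$, identifying a functor $\cO$ with the assignment $U\mapsto\cO(-)(U)$.

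The second step disposes of the finite-limit conditions defining a $k$-manifold structure, which are sectionwise. Because the inclusion $\Sh_\sM(A)\hookrightarrow\bFun(\mathrm{Op}(A)^\circ,L(\sM))$ preserves limits, a functor $\cO:\Man_k\to L(\Sh_\sM(A))$ preserves finite products and pullbacks along admissible maps if and only if $\cO(-)(U)$ does for every open $U$, i.e. if and only if each $\cO(-)(U)$ lies in $\Str_{\Man_k}(\sM)$. As $\Str_{\Man_k}(\sM)$ is a \emph{full} subcategory of $\bFun(\Man_k,L(\sM))$ closed under limits (product preservation being stable under limits), a $\bFun(\Man_k,L(\sM))$-valued sheaf takes values objectwise in $\Str_{\Man_k}(\sM)$ exactly when it is a $\Str_{\Man_k}(\sM)$-valued sheaf. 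This yields the non-local equivalence $\Str_{\Man_k}(\Sh_\sM(A))\simeq\bSh_{\Str_{\Man_k}(\sM)}(A)$.

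The heart of the matter, and the step I expect to be the main obstacle, is upgrading this to the local versions. On the left, locality of $\cO$ requires that for every admissible cover $\{N_i\to M\}$ the map $\coprod_i\cO(N_i)\to\cO(M)$ be an effective epimorphism in $\Sh_\sM(A)$. Effective epimorphisms in a sheaf $\infty$-category are \emph{not} a sectionwise notion: they are detected locally, on stalks when $|A|$ has enough points, or otherwise on a cofinal system of covers. I would therefore prove that $\coprod_i\cO(N_i)\to\cO(M)$ is an effective epimorphism in $\Sh_\sM(A)$ if and only if, under the identification of Step two, the corresponding sheaf takes values in $\Str^\loc_{\Man_k}(\sM)$ — that is, each section $\cO(-)(U)$ is a local structure and each restriction $\cO(-)(U)\to\cO(-)(V)$ is a \emph{local morphism}, satisfying the pullback-square condition on admissible maps. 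Concretely, I would pass to stalks and use that the stalk functors $\Sh_\sM(A)\to\sM$ are left exact and preserve effective epimorphisms, so that $\cO$ is local precisely when every stalk $\cO(-)_x$ is a local $k$-manifold structure on $\sM$; the content is then that a $\Str_{\Man_k}(\sM)$-valued sheaf has local stalks if and only if it is a sheaf valued in $\Str^\loc_{\Man_k}(\sM)$. The restriction of transition maps to local morphisms is exactly what records, in the sectionwise picture, that locality was a stalkwise rather than a global-section condition.

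Finally I would assemble the pieces. The chief remaining care is that $\Str^\loc_{\Man_k}(\sM)$ is a \emph{non-full} subcategory of $\Str_{\Man_k}(\sM)$, so one must check that forming $\bSh_{\Str^\loc_{\Man_k}(\sM)}(A)$ is compatible with Step two — namely that the descent limits computed in $\Str^\loc_{\Man_k}(\sM)$ agree with those computed in $\Str_{\Man_k}(\sM)$ once all transition maps are local, which holds because the pullback-square condition defining local morphisms is itself stable under the relevant limits. Granting the stalkwise characterization of Step three, the equivalence of Step two then restricts to the desired equivalence $\Str^\loc_{\Man_k}(\Sh_\sM(A))\xras\bSh_{\Str^\loc_{\Man_k}(\sM)}(A)$, completing the proof.
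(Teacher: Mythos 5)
Your first two steps are essentially the paper's own argument: the paper proves this lemma by the same exchange-of-variables device, writing a local structure on $\Sh_\sM(A)$ as a bifunctor out of $\Man_k\times\bSh(A)$ (it curries against the topos $\bSh(A)$ rather than the site $\mathrm{Op}(A)$, but this is cosmetic) and then asserting that the defining conditions separate variable by variable because ``(co)limits are calculated levelwise.'' Your treatment of the finite-product and admissible-pullback conditions is correct and matches the paper: these are limit conditions, limits of sheaves are sectionwise, so they pass through the swap without incident.

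The gap is in your Step three, and it sits exactly where you yourself locate ``the content.'' You correctly observe that an effective epimorphism in $\Sh_\sM(A)$ is a local (stalkwise) condition, not a sectionwise one; but you then assert that a $\Str_{\Man_k}(\sM)$-valued sheaf has local stalks \emph{if and only if} it is a sheaf valued in $\Str^\loc_{\Man_k}(\sM)$. The ``only if'' direction is false as stated: being valued in the (non-full) subcategory $\Str^\loc_{\Man_k}(\sM)$ means every section $\cO(-)(U)$ is a local structure, i.e.\ $\coprod_i\cO(N_i)(U)\ra\cO(M)(U)$ is an effective epimorphism in $\sM$ for every open $U$, and a stalkwise effective epimorphism of sheaves need not be a sectionwise one (the exponential sequence on $S^1$ already gives a surjection of sheaves that fails on global sections). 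So under your own stalkwise characterization of the left-hand side, the functor you build does not obviously land in $\bSh_{\Str^\loc_{\Man_k}(\sM)}(A)$, and the equivalence does not follow. Closing this requires either a genuine argument special to maps of the form $\coprod_i\cO(N_i)\ra\cO(M)$ for admissible covers (e.g.\ exploiting that such covers admit local sections, as in the proof of Proposition~\ref{fff}, so that sectionwise and local surjectivity coincide after refinement), or a reinterpretation of what ``sheaf valued in the non-full subcategory $\Str^\loc_{\Man_k}(\sM)$'' is to mean. The paper's proof does not isolate this issue --- it folds the locality condition into the superscript $!$ and lets it ride through the currying --- so your instinct that this is the one non-formal step is sound, but the proposal as written does not supply the missing argument.
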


\begin{proof}
It is easy to state the sheaf condition for a $\sM$-valued sheaf on $|A|$ as a limit preserving functor from $\Sh(A)$ to $\sM$ using the equivalence $\bSh_{L(\sM)}(\bSh(A))\simeq\bSh_{L(\sM)}(A)$ (Proposition~1.1.12 of \cite{LV}).  Since (co)limits are calculated levelwise, we have a chain of equivalences
\[  \tu{\Str}_{\Man_k}^\loc(\tu{Sh}_{\sM}(A))\simeq\bFun^!(\Man_k,L(\Sh_\sM(A)))\simeq\bFun^{(!,l)}(\Man_k\times\bSh(A),L(\sM))\simeq  \]
\[  \bFun^l(\bSh(A),\bFun^!(\Man_k,L(M)))\simeq\tu{\bSh}_{\tu{\Str}_{\Man_k}^\loc(\sM)}(A) \]
of $\infty$-categories.  Here the superscript $!$ refers to the preservation of the conditions in Definition~\ref{pkstructure} and Definition~\ref{local} and $l$ refers to the preservation of limits.  The superscript $(!,l)$ refers to the preservation of the appropriate limits in each respective variable seperately.
\end{proof}

It follows from Lemma~\ref{strtosh} that there exists an equivalence
\[    \Str^{\disc}_{\Man_k}(A)\xras\bSh_{\Str_{\Man_k}(\Set)}(A)   \]
of $\infty$-categories, and likewise for local objects. 

\begin{dfn}
Let $|A|$ be a topological space.  A \textit{(local) $k$-manifold structure on $|A|$} is a (local) $k$-manifold structure on the model category $\Sh_{\sSet}(A)$ of sheaves of simplicial sets on $|A|$.
\end{dfn}

The $\infty$-category of $k$-manifold structures on $|A|$ will be denoted 
\[  \Str_{\Man_k}(A):=\Str_{\Man_k}(\Sh_{\sSet}(A))  \] 
and local $k$-manifold structures by $\Str^\loc_{\Man_k}(A):=\Str_{\Man_k}^\loc(\Sh_{\sSet}(A))$.  By Lemma~\ref{strtosh} there exists an equivalence
\[    \Str_{\Man_k}(A)\xras\bSh_{\Str_{\Man_k}(\sSet)}(A)   \]
of $\infty$-categories, and likewise for local objects.

A pair $(|A|,\cO_A)$ consisting of a topological space $|A|$ together with a $k$-manifold structure $\cO_A$ on $|A|$ will be called a \textit{$k$-manifold structured space}.  The $\infty$-category of $k$-manifold structured spaces will be denoted 
\[  \bTop(\Man_k):=\bTop_{\sSet}(\Man_k)  \]
and that of local $k$-manifold structured spaces by $\bTop^\loc(\Man_k):=\bTop_{\sSet}^\loc(\Man_k)$.  Our generalized manifolds will be defined as objects in a full subcategory of the $\infty$-category of local $k$-manifold structured spaces.  

Note that there is a natural functor
\[    \pi_0:\Str^\loc_{\Man_k}(A)\ra\Str^{\loc,\disc}_{\Man_k}(A)   \]
sending $A=(|A|,\cO_A)$ to $(|A|,\pi_0(\cO_A))$.  This functor is left adjoint to the inclusion and induces an adjunction
\[     \pi_0:\bTop(\Man_k)\rightleftarrows\bTop_{\Set}(\Man_k):i    \]
between $\infty$-categories (and their subcategories of local objects).

\begin{ex}
When $k=\bb{R}$, the $\infty$-category $\bTop(\Man_k)$ is closely related to the simplicial category of simplicial $\cC^\infty$-ringed spaces.  In particular, when $|A|=*$, the $\infty$-category $\Str_{\Man_k}(*)$ is equivalent to the $\infty$-category of simplicial $C^\infty$-rings which preserve pullbacks of admissible maps.  More generally, we call 
\[  \textbf{sSmRng}_k:=\Str_{\Man_k}(*)  \] 
the $\infty$-category of \textit{simplicial $k$-smooth rings}.  Similarly, one has the $\infty$-category $\textbf{sSmRng}_k^\loc:=\Str_{\Man_k}^{\loc}(*)$ of local simplicial $k$-smooth rings.
\end{ex}

\begin{rmk}
Let $A=(|A|,\cO_A)$ be a local $k$-manifold structured space.  The global sections functor $\Gamma:\bTop^\loc(\Man_k)\ra\textbf{sSmRng}_k$ sending $A$ to $\Gamma(A):=\cO_A(A)$ admits a right adjoint
\[    \Spec^\infty:\textbf{sSmRng}_k\ra\bTop(\Man_k)  \]
sending a $k$-smooth ring $R$ to a $k$-manifold structured space $(|R|,\cO_R)$, the details of which are carefully contained in \cite{BN}. 
\end{rmk}

Let $M=(|M|,\cO^\disc_M)$ be a $k$-manifold.  We define a (discrete) $k$-manifold structure $\cO_M$ on $|M|$ by
\[    \cO_M(N)(U):=\Hom_{\Man_k}((U,\cO_M^\disc|U),(|N|,\cO_N^\disc)    \]
for a $k$-manifold $N=(|N|,\cO_N^\disc)$ and a neighborhood $U\subseteq|M|$.  This construction defines a functor 
\[  \Spec^{\Man_k}:\tu{\bMan}_k\ra\bTop(\Man_k)  \]  
sending a $k$-manifold $M$ to the $k$-manifold structure $(|M|,\cO_M)$.  

\begin{prop}\label{fff}
The functor $\Spec^{\Man_k}$ factors fully faithfully through the $\infty$-category of local $k$-manifold structured spaces.
\end{prop}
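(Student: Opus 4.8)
The statement packages two claims: that every $\Spec^{\Man_k}(M)=(|M|,\cO_M)$ is a \emph{local} $k$-manifold structured space, so that $\Spec^{\Man_k}$ lands in $\bTop^\loc(\Man_k)$, and that the resulting functor is fully faithful. I would dispatch these in order, first verifying that $\cO_M$ is a $k$-manifold structure and is local, then establishing full faithfulness by reducing to the discrete setting, where it becomes a classical fact about $k$-smooth rings.

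By construction $\cO_M(N)$ is the sheaf $U\mapsto\Hom_{\Man_k}((U,\cO_M^\disc|U),N)$ of $k$-smooth maps from opens of $M$ into $N$. That $\cO_M$ satisfies Definition~\ref{pkstructure} is immediate: it preserves finite products because $\Hom_{\Man_k}(-,N_1\times N_2)\simeq\Hom_{\Man_k}(-,N_1)\times\Hom_{\Man_k}(-,N_2)$, and it preserves pullbacks along admissible maps because admissible maps are open immersions (resp. local biholomorphisms), so the fibre product of $k$-manifolds along them is computed as an honest pullback of mapping sheaves. For locality (Definition~\ref{local}) I must show that whenever $\{N_i\ra N\}$ is a family of admissible maps generating a covering sieve, the induced map $\coprod_i\cO_M(N_i)\ra\cO_M(N)$ is an effective epimorphism of sheaves on $|M|$. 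The decisive observation is that admissible maps are local homeomorphisms, so any $k$-smooth map $U\ra N$ factors, after refining $U$ by the open cover of $|M|$ pulled back from $\{N_i\ra N\}$, through one of the $N_i$; this is exactly surjectivity after refinement, and since $\cO_M$ is discrete (set-valued) it is precisely the effective epimorphism condition. It is convenient here to pass through Lemma~\ref{strtosh} and check the statement for the associated sheaf valued in $\Str^\loc_{\Man_k}(\Set)$.

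For full faithfulness I first reduce to the discrete case. Both $\cO_M$ and $\cO_{M'}$ are discrete, hence lie in the image of the fully faithful inclusion $i:\bTop^\loc_{\Set}(\Man_k)\ra\bTop^\loc(\Man_k)$ right adjoint to $\pi_0$, so the mapping space $\tu{Map}_{\bTop^\loc(\Man_k)}(\Spec^{\Man_k}M,\Spec^{\Man_k}M')$ is equivalent to the discrete mapping space computed in $\bTop^\loc_{\Set}(\Man_k)$. By the identification of $\bTop_{\Set}(\Man_k)$ with ($\cC^\infty$- or holomorphically-) ringed spaces, a morphism there is a pair $(f,f^\#)$ with $f:|M|\ra|M'|$ continuous and $f^\#:f^*\cO_{M'}\ra\cO_M$ a local natural transformation; passing to underlying $k$-smooth rings sends $f^\#$ to a homomorphism $\cC^\infty(M')\ra\cC^\infty(M)$ (resp. $\cO(M')\ra\cO(M)$) in $\textbf{SmRng}_k$. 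The crux is then the classical fact that for $k$-manifolds every such $k$-smooth ring homomorphism is induced by a unique $k$-smooth map $M\ra M'$, and that the locality of $f^\#$ forces this map to realize the prescribed $f$ on underlying spaces. This yields the desired bijection $\Hom_{\bMan_k}(M,M')\xras\Hom_{\bTop^\loc_{\Set}(\Man_k)}(\Spec^{\Man_k}M,\Spec^{\Man_k}M')$.

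The main obstacle is this last step: showing that a \emph{local} morphism of $k$-manifold structured spaces between genuine manifolds is exactly a $k$-smooth map. In the case $k=\bb{R}$ this rests on recovering the points of $M$ as $\cC^\infty$-ring homomorphisms $\cC^\infty(M)\ra\bb{R}$ and detecting smooth maps by pullback of functions, where the locality condition of Definition~\ref{local} is precisely what forces $f^\#$ to respect point-evaluation and thereby determine $f$; the subtlety is that $\cC^\infty$-rings are far from finitely generated, so one genuinely uses preservation of pullbacks of admissible maps rather than naive generators. In the case $k=\bb{C}$ the same arguments must be rerun for the sheaf of holomorphic functions, whose ring-to-geometry dictionary is less standard, and I expect verifying this holomorphic analogue, together with matching the locality condition to the classical locally-ringed condition, to require the most care.
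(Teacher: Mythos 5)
Your verification that $\cO_M$ is a local $k$-manifold structure is essentially the paper's argument: the paper likewise reduces locality to the surjectivity of $\coprod_i\Hom_{\Man_k}(U,N_i)\ra\Hom_{\Man_k}(U,N)$ after shrinking $U$, using that admissible maps are local homeomorphisms to factor a given $f:U\ra N$ through some $N_i$. That half of your proposal is fine and matches the paper.

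For full faithfulness, however, your route has a genuine gap. You reduce to the discrete setting (harmless) and then pass to \emph{global sections}, producing a homomorphism $\cC^\infty(M')\ra\cC^\infty(M)$ (resp. $\cO(M')\ra\cO(M)$) and invoking the classical fact that such homomorphisms are pullback along a unique $k$-smooth map. For $k=\bb{R}$ this is Milnor's exercise and can be made to work, but for $k=\bb{C}$ the step fails outright: a compact complex manifold has only constant global holomorphic functions, so the induced map $\cO(M')\ra\cO(M)$ on global sections carries no information whatsoever and cannot be ``induced by a unique holomorphic map.'' The difficulty you flag at the end as requiring ``the most care'' is in fact an obstruction to the global-sections strategy, not a technicality. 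The fix is to not globalize at all: the datum $(f,f^\sharp)$ already contains the continuous map $f$, and the only thing to check is that $f$ is $k$-smooth, which is a local statement. This is exactly what the paper does: for each chart $\phi_i:V_i\ra k^n$ of the target, viewed as an element of $\cO_{M'}(k^n)(V_i)$, the natural transformation $f^\sharp$ produces an element of $\cO_M(k^n)(f^{-1}(V_i))$, i.e.\ a $k$-smooth map $f^{-1}(V_i)\ra k^n$, and naturality plus locality identify it with $\phi_i\circ f$, so $f$ is $k$-smooth in charts. This works uniformly for $\bb{R}$ and $\bb{C}$ and avoids any appeal to a global ring-to-geometry dictionary. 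If you wish to keep your structure, replace the global-sections step by this chart-local argument; otherwise the holomorphic case of your proof does not close.
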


\begin{proof}
Let $M=(|M|,\cO^\disc_M)$ be a $k$-manifold.  Then $\Spec^{\Man_k}$ sends $M$ to the $k$-manifold strutured space $(|M|,\cO_M)$.  The locality of $\cO_M$ translates into the requirement that the morphism
\[  \coprod_i\Hom_{\Man_k}(U,N_i)\ra\Hom_{\Man_k}(U,N)  \]
of sets is surjective for any collection of admissible maps $\{N_i\ra N\}$ generating a covering sieve.  This is satisfied if every $f:U\ra N$ factors through some $(|N_i|,\cO_{N_i}^\disc)$ (possibly after shrinking $U$).  But we can choose some $N_i$ and a neighborhood $V$ of some point $y$ in $|N_i|$, where $\alpha(y)=f(x)$, and owing to the admissibility of $\alpha:N_i\ra N$ construct an open embedding $\alpha_V:(V,\cO_{N_i}|V)\ra N$.  We may then shrink $U$ until $f(U)\subseteq\alpha_V(V)$.

Given a morphism $f:M\ra N$ of $k$-manifolds, the locality of a morphism $\cO_M\ra\cO_N$ is clear after unwinding the definition.  Therefore we must show that
\[   \Hom_{\Man_k}(M,N)\ra\Map_{\bTop^\loc(\Man_k)}((|M|,\cO_M),(|N|,\cO_N))  \]
is a bijection of sets (the space on the right is discrete).  It is clearly injective.  We need to then show that every morphism $(f,f^\sharp)$ on the right hand side comes from $f$ being $k$-smooth.  But given any chart $\phi_i:V_i\ra\bb{R}^n$ on $N$, the map $f^\sharp$ determines a $k$-smooth morphism $f^\sharp(\phi_i):f^{-1}(\phi_i)\ra\bb{R}^n$ by definition. 
\end{proof}

Owing to Proposition~\ref{fff}, we will often identify a manifold $M$ with its image under the fully faithful functor $\Spec^{\Man_k}$.

We will call the types of spaces which locally model our derived manifolds principal derived manifolds.  By definition, a principal derived $k$-manifold is a local $k$-manifold structured space which is given by a finite limit of $k$-manifolds.  

\begin{dfn}
A $k$-manifold structured space $A=(|A|,\cO_A)$ is said to be a \textit{principal derived $k$-manifold} if there exists an equivalence
\[   A\xras\lim_{i}M_i \]
in $\bTop^\loc(\Man_k)$ for a functor $M:I\ra\Man_k$ whose domain $I$ is a finite category.
\end{dfn}

Let $(|A|,\cO_A)$ be a $k$-manifold structured space and $i:U\subseteq |A|$ an open subset of $|A|$.  We denote by $(U,\cO_A|U)$ the $k$-manifold structured space where $\cO_A|U$ is given by the composition
\[      \Man_k\xra{\cO_A}\Sh_{\sSet}(A)\xra{i^*}\Sh_{\sSet}(U)  \]
of categories.  A derived $k$-smooth manifold is a $k$-manifold structured space which is given locally by a finite limit of $k$-smooth manifolds.

\begin{dfn}\label{derivedmanifold}
A $k$-manifold structured space $A=(|A|,\cO_A)$ is said to be a \textit{derived $k$-manifold} if for any point $x$ in $|A|$, there exists a neighborhood $U$ of $x$ such that the pair $(U,\cO_A|U)$ is a principal derived $k$-manifold.
\end{dfn}

Let $\dMan_k$ denote the full subcategory of $\bTop^\loc(\Man_k)$ spanned by derived $k$-manifolds.  It follows trivially that the functor of Proposition~\ref{fff} factors through the $\infty$-category of derived $k$-manifolds determining a fully faithful functor
\[    \Spec^{\Man_k}:\bMan_k\ra\dMan_k  \]
between $\infty$-categories.  Further, it can be deduced from Theorem~3.3.3 of \cite{Sp1} that, for any derived $k$-manifold $A=(|A|,\cO_A)$, there exists an equivalence
\[    \cO_A(M)(|A|)\xras\Map_{\dMan_k}(A,\Spec^{\Man_k}(M))  \]
of spaces.

\begin{dfn}
Let $A=(|A|,\cO_A)$ and $B=(|B|,\cO_B)$ be derived $k$-smooth manifolds.  Then a morphism $f:A\ra B$ in $\dMan_k$ is said to be a \textit{closed immersion} if the underlying morphism of topological spaces is a homeomorphism from $|A|$ to a closed subset of $|B|$ and the morphism $f^*\cO_B\ra\cO_A$ of $k$-manifold structures is an effective epimorphism.
\end{dfn}

A crucial advantage of working in the $\infty$-category of derived $k$-manifolds over the category of $k$-manifolds is the following~: 

\begin{prop}\label{finitelimits}
The $\infty$-category $\dMan_k$ of derived $k$-manifolds is closed under finite limits.
\end{prop}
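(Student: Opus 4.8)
The plan is to reduce the statement to two cases---the existence of a terminal object and of pullbacks---and then to verify each locally, exploiting that, by Definition~\ref{derivedmanifold}, ``being a derived $k$-manifold'' is a condition that may be checked on a neighborhood of each point. First I would recall the standard fact that an $\infty$-category admits all finite limits as soon as it admits a terminal object and all pullbacks, so it suffices to produce these two inside $\dMan_k$. The terminal object of $\bTop^\loc(\Man_k)$ is the point $\Spec^{\Man_k}(\ast)$ equipped with its canonical structure; since the one-point space is a $k$-manifold it lies in $\dMan_k$ by Proposition~\ref{fff}, so the terminal object causes no trouble and everything hinges on pullbacks.

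Next I would invoke the general theory of (local) structured spaces of \cite{LV}, in the form used by \cite{Sp1}, to guarantee that the ambient $\infty$-category $\bTop^\loc(\Man_k)$ admits pullbacks, together with an explicit description: given a cospan $A\to C\leftarrow B$, the underlying topological space of the pullback is the fiber product $|A|\times_{|C|}|B|$, and its local structure is obtained from the pushout of the pulled-back structures $p^\ast\cO_A$ and $q^\ast\cO_B$ over $r^\ast\cO_C$ along the projections, after passing to the associated local structure. The feature I will use repeatedly is that this construction is compatible with restriction to open subsets, i.e. for opens $U\subseteq|A|$, $V\subseteq|B|$, $W\subseteq|C|$ whose structure maps send $U,V$ into $W$, there is a natural equivalence $(A\times_C B)|(U\times_W V)\simeq (A|U)\times_{C|W}(B|V)$; this is precisely the locality of the limit of structure sheaves.

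The heart of the argument is then the local computation. Fix a point $x=(a,b)$ of $|A|\times_{|C|}|B|$ lying over $c\in|C|$. Using Definition~\ref{derivedmanifold} I would first choose a principal neighborhood $W\ni c$ with $C|W\simeq\lim_K P$ a finite limit of $k$-manifolds, and then, after shrinking, principal neighborhoods $U\ni a$ and $V\ni b$ with $A|U\simeq\lim_I M$ and $B|V\simeq\lim_J N$ whose images under the structure maps land in $W$. By the locality equivalence of the previous step, the pullback restricted to $U\times_W V$ is the pullback $(\lim_I M)\times_{\lim_K P}(\lim_J N)$ computed in $\bTop^\loc(\Man_k)$. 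Since a pullback of finite limits is again a finite limit---an iterated limit is a limit over the finite total category assembled from the cospan and the indexing categories $I$, $J$, $K$---this object is itself a finite limit of $k$-manifolds, hence a principal derived $k$-manifold. As $x$ was arbitrary, $A\times_C B$ is locally principal and therefore a derived $k$-manifold.

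The step I expect to be the main obstacle is the rewriting of $(\lim_I M)\times_{\lim_K P}(\lim_J N)$ as a single finite limit of $k$-manifolds. Formally this is a Fubini-type statement for limits via the Grothendieck construction, but applying it requires lifting the cospan maps $A\ra C$ and $B\ra C$ to maps of presenting diagrams $I\ra K$ and $J\ra K$ covering natural transformations of the manifold-valued functors, and an abstract equivalence of objects presented as limits need not refine to a map of diagrams. Resolving this cleanly means either exploiting the explicit zero-locus presentations of principal derived $k$-manifolds as in \cite{Sp1}, so that the maps are manifestly induced by maps of manifolds, or arguing that the class of objects expressible as finite limits of $k$-manifolds is closed under finite limits directly in $\bTop^\loc(\Man_k)$, without committing to fixed presentations. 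I would pursue the latter, since the needed closure is a formal consequence of $\bTop^\loc(\Man_k)$ possessing finite limits once it is checked at the level of the localized structured-space construction.
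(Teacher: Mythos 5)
Your overall skeleton (terminal object plus pullbacks, then a local check on principal neighborhoods) matches the paper's, and the final local step --- that a pullback of finite limits of $k$-manifolds is again a finite limit of $k$-manifolds --- is also how the paper concludes. But there is a genuine gap at the point you treat as routine: you assert that the ambient $\infty$-category $\bTop^\loc(\Man_k)$ admits pullbacks of arbitrary cospans, with underlying space $|A|\times_{|C|}|B|$ and structure sheaf obtained by a pushout followed by ``passing to the associated local structure.'' This is exactly the statement that does not come for free from the general theory of structured spaces: the locality constraint on both objects and morphisms obstructs the naive construction, and there is no general existence theorem for finite limits of \emph{local} structured spaces. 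This is why the paper does not argue this way. Instead it rewrites $A\times_C B$ as the pullback of $A\times B\ra C\times C\xleftarrow{d}C$ along the diagonal, observes that $d$ is a closed immersion, and invokes Lurie's results on pullbacks along closed immersions of structured spaces (\cite{LIX}) to get existence; products cause no difficulty, so this reduces the whole existence question to the one case that the literature actually covers. Your fallback at the end --- ``the needed closure is a formal consequence of $\bTop^\loc(\Man_k)$ possessing finite limits'' --- is circular for the same reason: possessing finite limits is the thing at issue.

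Two smaller points. First, you identify the Fubini-type rewriting of $(\lim_I M)\times_{\lim_K P}(\lim_J N)$ as the main obstacle; once existence of the pullback is secured, that step is comparatively formal (and the closure of ``finite limits of $k$-manifolds'' under further finite limits is essentially the definition of a principal derived $k$-manifold being an \emph{arbitrary} finite limit, not a single fiber product), so your energy is directed at the wrong bottleneck. Second, even granting existence, the claim that the underlying topological space of the pullback is the set-theoretic fiber product is a feature of pullbacks along closed immersions rather than of general pullbacks of structured spaces, so it should be justified via the same reduction. If you replace your second paragraph with the diagonal/closed-immersion reduction and the citation to \cite{LIX}, the rest of your argument goes through and coincides with the paper's.
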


\begin{proof}
Consider a diagram
\[   A=(|A|,\cO_A)\ra C=(|C|,\cO_C)\la B=(|B|,\cO_B)  \]
of derived $k$-manifolds.  The pullback of this diagram is equivalent to the pullback $D$ of the diagram
\[  A\times B\ra C\times C\xla{d} C  \]
where $d:C\ra C\times C$ is the diagonal map.  It is clear that $A\times B$ is itself a derived manifold.  The diagonal map is a closed immersion so it follows from \cite{LIX} that the pullback exists in $\bTop^\loc(\Man_k)$.  It is clear that locally, the $k$-manifold structured space $D$ is given by a finite limit of manifolds.  An $\infty$-category admits all finite limits if and only if it admits pullbacks and has a terminal object and therefore the statement is satisfied.
\end{proof} 

Other approaches to derived manifolds are contained in \cite{Sp1}, \cite{Sp2} and \cite{BN}.  Our notion of derived manifold is more general than that of those cited, the collection of whose objects are not closed under finite limits.  The relationship is as follows.  We call a $k$-manifold structured space a \textit{1-quasi smooth} derived $k$-manifold if it is given locally as a fiber product of $k$-manifolds.  Any such fiber product is locally the zero locus of a $k$-smooth function.  Therefore, the definition in \cite{Sp2} corresponds to $1$-quasi smooth derived $k$-manifolds.  

By induction, suppose we have the $\infty$-category of $n$-quasi smooth derived $k$-manifolds.  Then a \textit{$(n+1)$-quasi smooth} derived $k$-manifold is a local $k$-manifold structured space which is given locally as a fiber product of $n$-quasi smooth derived $k$-manifolds.  In this inductive definition, a $0$-quasi smooth derived $k$-manifold will be simply a $k$-manifold  

A derived $k$-manifold is therefore $n$-quasi smooth for some $n$.  In other words, if $n\mbox{-}\dMan_k$ denotes the full subcategory of $\bTop^\loc(\Man_k)$ spanned by $n$-quasi smooth derived $k$-manifolds then there exists an equivalence
\[     \dMan_k\xras\coprod_n n\mbox{-}\dMan_k  \]
of $\infty$-categories.

We would like to gain some perspective on which limits in the category of $k$-manifolds and derived $k$-manifolds coincide.  The following proposition confirms that transverse intersections of $k$-manifolds are preserved in the $\infty$-category of derived $k$-manifolds.  Moreover, the converse is true, ie. a fiber product of $k$-manifolds in the $\infty$-category of derived $k$-manifolds corresponds to the fiber product in the category of manifolds only if the intersection is transverse.  

\begin{prop}
The fiber product, if it exists, of a diagram in $\bMan_k$ is equivalent to the fiber product of the diagram in $\dMan_k$ if and only if the intersection is transverse.
\end{prop}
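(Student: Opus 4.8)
The plan is to reduce the statement to a local computation of a derived zero locus and to detect transversality through the discreteness of its structure sheaf. First I would use that being an equivalence in $\dMan_k$ is local on the underlying space, together with the diagonal presentation of the fiber product from the proof of Proposition~\ref{finitelimits}. Write the cospan as $A\xra{f}C\xla{g}B$ and choose charts $A\subseteq k^{m}$, $B\subseteq k^{n}$, $C\subseteq k^{\ell}$ near a point $(a,b)$ of the set-theoretic fiber product with $c:=f(a)=g(b)=0$. Pulling back the diagonal of $C$ presents the derived fiber product $D:=A\times_C^h B$, in a neighbourhood of $(a,b)$, as the derived zero locus of
\[  \phi:=f-g:A\times B\ra k^{\ell}. \]
Its structure sheaf $\cO_D$ is the derived tensor product of $\cO_{A\times B}$ with $k$ over $\cO_{k^\ell}$ along $\phi$ and the origin, computed by the Koszul complex on the components $\phi_1,\dots,\phi_\ell$; in particular $\pi_0(\cO_D)$ is the classical $\cC^\infty$- (resp.\ holomorphic) ringed space cut out by $\phi_1=\cdots=\phi_\ell=0$.

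The next step records the differential-geometric translation. Since $d\phi_{(a,b)}(v,w)=df_a(v)-dg_b(w)$, the image of $d\phi_{(a,b)}$ is $\mathrm{im}(df_a)+\mathrm{im}(dg_b)\subseteq T_cC$, so $\phi$ is submersive at $(a,b)$ precisely when $f$ and $g$ are transverse there. I would then handle both directions through the dichotomy that $\cO_D$ is discrete if and only if $(\phi_1,\dots,\phi_\ell)$ is a regular sequence, i.e.\ if and only if the intersection is transverse. For the forward implication, assuming transversality along the whole fiber, the implicit function theorem (valid in both the $\bb{R}$-smooth and $\bb{C}$-smooth settings) changes coordinates so that $\phi$ becomes a linear projection $k^{m+n}\ra k^{\ell}$; its components are then part of a coordinate system, hence a regular sequence, the Koszul complex is a resolution, and $\cO_D$ is discrete with $\pi_0(\cO_D)$ the sheaf of the submanifold $\phi^{-1}(0)$. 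This submanifold with its projections is exactly the classical fiber product, so $D\xras\Spec^{\Man_k}(A\times_C B)$ and the classical fiber product both exists and agrees with the derived one.

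For the converse I would argue by contraposition. If $f$ and $g$ fail to be transverse at some $(a,b)$, then $d\phi_{(a,b)}$ has rank $r<\ell$, so $d\phi_1,\dots,d\phi_\ell$ are linearly dependent at $(a,b)$; consequently $(\phi_1,\dots,\phi_\ell)$ is not a regular sequence and the Koszul complex acquires nonvanishing higher homology, i.e.\ $\pi_i(\cO_D)_{(a,b)}\neq 0$ for some $i>0$. On the other hand, if $D$ were equivalent to $\Spec^{\Man_k}(N)$ for a manifold $N$, the mapping-space equivalence $\cO_A(M)(|A|)\xras\Map_{\dMan_k}(A,\Spec^{\Man_k}(M))$ deduced in this section from Theorem~3.3.3 of \cite{Sp1} would force $\cO_D$ to be discrete. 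Hence $D$ is not equivalent to any manifold, so even when the classical fiber product $A\times_C B$ happens to exist (for instance for a non-transverse but set-theoretically clean intersection, where it carries strictly more than the expected dimension), we have $\Spec^{\Man_k}(A\times_C B)\not\simeq D$. Combining the two directions yields the claim.

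The main obstacle will be the $\bb{R}$-smooth case of the equivalence ``regular sequence $\iff$ vanishing of higher Koszul homotopy''. Over $\bb{C}$ the relevant local rings are regular and Noetherian and this is classical commutative algebra, but $\cC^\infty$-rings are far from Noetherian, so the computation of $\pi_{>0}(\cO_D)$ must be carried out in the homotopy theory of simplicial smooth rings rather than in ordinary commutative algebra. The forward direction sidesteps this by linearizing via the implicit function theorem, but the converse genuinely requires that the failure of transversality be detected at the level of the simplicial smooth ring structure; here Theorem~3.3.3 of \cite{Sp1} supplies the comparison ensuring that the Koszul computation over $\cC^\infty$-rings behaves as expected and that non-transversality produces genuine derived structure.
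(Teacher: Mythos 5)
Your forward direction is sound and is in substance Spivak's Corollary~4.1.18, which is all the paper invokes for that half. The gap is in the converse, specifically in the step ``$d\phi_1,\dots,d\phi_\ell$ are linearly dependent at $(a,b)$; consequently $(\phi_1,\dots,\phi_\ell)$ is not a regular sequence and the Koszul complex acquires nonvanishing higher homology.'' Vanishing or degeneration of the differentials at a point does not obstruct regularity of the sequence. Take $A$ the $x$-axis and $B$ the parabola $y=x^2$ inside $C=\bb{R}^2$: in your normal form $\phi(s,t)=(s-t,-t^2)$ on $\bb{R}^2$, and $(s-t,\,t^2)$ \emph{is} a regular sequence in $\cC^\infty(\bb{R}^2)$ (killing $s-t$ leaves $t^2$ in $\cC^\infty(\bb{R})$, which is a nonzerodivisor: any $h$ with $t^2h=0$ vanishes off the origin, hence everywhere). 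So the Koszul complex is acyclic in positive degrees and $\cO_D$ is discrete, yet the intersection is tangential, not transverse. The failure of $D\simeq\Spec^{\Man_k}(A\times_C B)$ in this example (the classical fiber product is a point) is detected not by $\pi_{>0}(\cO_D)$ but by the nilpotent class of $t$ in $\pi_0(\cO_D)=\cC^\infty(\bb{R})/(t^2)$. Your dichotomy ``discrete iff regular sequence iff transverse'' is therefore false in its second equivalence, and the contrapositive collapses exactly in the tangential, non-excess case.

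To repair it you need an invariant that sees both possible failures. Either argue directly that $\pi_0(\cO_D)$ cannot be the reduced structure sheaf of a manifold mapping correctly to $A$ and $B$ when $d\phi$ drops rank along $\phi^{-1}(0)$, or, more cleanly, use the cotangent complex: $\bb{L}_D$ is presented by the two-term complex $\cO_D^{\ell}\ra\cO_D^{m+n}$ given by $d\phi$, and $D$ being (equivalent to) a manifold forces this to be quasi-isomorphic to a locally free sheaf concentrated in degree $0$, hence forces $d\phi$ to have constant rank $\ell$ along the fiber, which is transversality. This is in substance what Theorem~4.2.1 of \cite{Sp1} does, and that citation, together with Corollary~4.1.18, is the entirety of the paper's own proof. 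Your local Koszul reduction is a genuinely more explicit route for the forward implication, but as written the converse proves strictly less than it needs to.
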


\begin{proof}
Let $M$, $N$ and $P$ be $k$-manifolds and consider the pullback diagram
\begin{diagram}
A &\rTo^{f'} &P\\
\dTo  &        &\dTo_g\\
N  &\rTo^{f}       &M
\end{diagram}
in the $\infty$-category of derived $k$-manifolds.  The pullback in the category of manifolds will be denoted $Q$.  If the intersection is transverse, ie. the map $f\coprod g:N\coprod P\ra M$ is a submersion, then Corollary~4.1.18 of \cite{Sp1} states that there exists an equivalence $Q\simeq A$ of $1$-quasi smooth derived $k$-manifolds and thus an equivalence of derived $k$-manifolds.  The converse follows from Theorem~4.2.1 of \textit{loc. cit.} using the same argument.
\end{proof}

Every derived $k$-manifold has an underlying algebraic description.  We will now show how to extract this underlying algebraic model.  Let $\cP_k$ denote the opposite of the full subcategory of the category $\tu{calg}_k$ of commutative $k$-algebras spanned by objects of the form $k[x_1,\ldots,x_n]$, ie. the opposite of the category of polynomial algebras over $k$.  Let $\cP_k$ be endowed with the trivial Grothendieck topology (generated by equivalences).  

Consider the $\infty$-category $\bTop(\cP_k)$ defined as in Construction~\ref{constr} by replacing $\Man_k$ with $\cP_k$, ie. it is the $\infty$-category of pairs $(|A|,F_A)$ where $|A|$ is a topological space together with a finite product preserving functor $F_A:\cP_k\ra\bSh(A)$ and a morphism is a map of topological spaces together with a local natural transformation of $\cP_k$-structures.  

Let $\dg_k$ denote the category of differential graded modules, or simply dg-modules, with its projective model structure and $\bdg_k:=L(\dg_k)$ the $\infty$-category of dg-modules over $k$ given by the localization with respect to its weak equivalences.  Similarly, let $\cdga_k$ denote the model category of coconnective commutative differential graded $k$-algebras with $\bcdga_k$ its corresponding $\infty$-category.  Let $\textbf{dgTop}_k$ denote the opposite of the $\infty$-category 
\[   \bFun(*,\int_{\Top}\Sh_{\cdga_k})\times_{\bFun(*,\bTop)}\bTop   \]
using Construction~\ref{constr}.  We will refer to $\textbf{dgTop}_k$ as the $\infty$-category of \textit{dg-ringed topological spaces} over $k$.

Consider the $\infty$-category $\textbf{scalg}_k:=L(\CMon(\tu{salg}_k))$ given by the localization of the simplicial model category  of commutative monoid objects in simplicial $k$-algebras.  The fibrations and weak equivalences in the model structure are given by those on their underlying simplicial sets.  If $C$ and $D$ are $\infty$-categories, we denote by $\bFun^l(C,D)$ the full subcategory of $\bFun(C,D)$ spanned by functors preserving all limits and $\bFun^{lex}(C,D)$ those preserving finite limits.  

For a topological space $|A|$, there exists a chain of equivalences
\[   \bSh_{\textbf{scalg}_k}(A)\xras\bFun^{l}(\bSh(A),\textbf{scalg}_k)\xras\bFun^{l}(\textbf{scalg}^\circ_k,\bSh(A))\xras\bFun^{lex}(\cP_k,\bSh(A))   \]
where the second equivalence follows from Proposition 4.1.9 of \cite{LV}.  Since there exists an equivalence $\textbf{scalg}_k\xras\bcdga_k$ of $\infty$-categories the equivalence
\[    \Str_{\cP_k}(A)\ra\bSh_{\bcdga_k}(A)   \]
follows.  Thus, there exists an equivalence
\[    \bTop(\cP_k)\ra\tu{\textbf{dgTop}}_k   \]
of $\infty$-categories.  Therefore, an object $(|A|,F_A)$ in $\bTop(\cP_k)$ may be interpreted as a topological space $|A|$ together with a $\bcdga_k$-valued sheaf on $|A|$.  An arrow from $(|A|,F_A)$ to $(|B|,F_B)$ in $\bTop(\cP_k)$ can be identified with a map $f:|A|\ra|B|$ of topological spaces together with a map $\alpha:f^*F_B\ra F_A$ of $\bcdga_k$-valued sheaves on $|A|$.  

The map $\rho:\cP_k\ra\Man_k$ sending a polynomial algebra $E$ to the set $\Hom_k(E,k)$ endowed with its natural $k$-manifold structure induces a diagram
\begin{diagram}
\bTop^\loc(\Man_k)  &&\rTo^{\rho^*}  &&\bTop(\cP_k) \\
 &\rdTo^{p_1}&         &\ldTo_{p_2}&\\
      && \bTop &&
\end{diagram}
where $p_1$ and $p_2$ project the respective structured spaces to their underlying topological spaces.  The functor $\rho^*$ is conservative meaning that a local morphism of local $k$-manifold structured spaces is an equivalence if and only if it is an equivalence of dg-ringed topological spaces.  This can be deduced from Proposition~11.9 of \cite{LIX}.  The fiber of the functor $\rho^*$ over a topological space $|A|$ will be denoted $\rho^*_{|A|}$.  We denote the image of a $k$-manifold structured space $A=(|A|,\cO_A)$ under $\rho^*_{|A|}$ by 
\[  A^\alg:=(|A|,\cO_A^\alg)   \]
and identify $\cO_A^\alg$ with a $\bcdga_k$-valued sheaf.  

The composition of $\rho^*:\bTop(\Man_k)\ra\bTop(\cP_k)$ with the inclusion determines a functor
\[   -^\alg:\dMan_k\ra\textbf{dgTop}_k  \]
sending a derived $k$-manifold $A=(|A|,\cO_A)$ to the dg-ringed topological space $A^\alg=(|A|,\cO_A^\alg)$ which we call the \textit{algebraic model} functor.   Composing this functor with the forgetful functor followed by the global sections functor $\Gamma$ determines a functor 
\[  \Gamma^\alg:\dMan_k\ra\bcdga_k  \] 
sending a derived $k$-manifold $A=(|A|,\cO_A)$ to $\Gamma^\alg(A):=\Gamma(\cO_A^\alg)$.

\begin{prop}\label{preservespullbacks}
The algebraic model functor preserves pullbacks of diagrams of the form 
\[  B\xra{f} A \xleftarrow{g} C  \] 
where $f$ induces an effective epimorphism $f^*\cO_A\ra\cO_B$.
\end{prop}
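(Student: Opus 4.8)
The plan is to construct the canonical comparison morphism from the algebraic model of the pullback to the pullback of the algebraic models, and to prove it is an equivalence by working separately on underlying topological spaces, where everything is formal, and on structure sheaves, where the only real content lies.

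First I would record the shape of both pullbacks. Since $f$ induces an effective epimorphism $f^*\cO_A\ra\cO_B$, the pullback $D=B\times_A C$ exists in $\bTop^\loc(\Man_k)$ by the method of Proposition~\ref{finitelimits}: it is a pullback along a map which is an effective epimorphism on structure sheaves, and such pullbacks are furnished by \cite{LIX}. Its underlying space is the topological fiber product $|B|\times_{|A|}|C|$ and its structure sheaf is the pushout $\cO_B\sqcup_{\cO_A}\cO_C$ of $\Man_k$-structures formed after restriction to $|D|$. Because $\rho(k[x])=\Hom_k(k[x],k)=k$, restriction along $\rho$ carries the effective epimorphism $\cO_A\ra\cO_B$ to an effective epimorphism $\rho^*\cO_A\ra\rho^*\cO_B$ of $\cP_k$-structures: both conditions amount to the induced map being the realization of its \v{C}ech nerve, and this is detected on the value at $k$. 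Hence the pullback $\rho^*B\times_{\rho^*A}\rho^*C$ likewise exists in $\bTop(\cP_k)$, again with underlying space $|B|\times_{|A|}|C|$ and structure sheaf the pushout $\rho^*\cO_B\sqcup_{\rho^*\cO_A}\rho^*\cO_C$.

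The functor $\rho^*$ lies over $\bTop$, as displayed in the triangle with $p_1$ and $p_2$, so the comparison morphism $\rho^*D\ra\rho^*B\times_{\rho^*A}\rho^*C$ is already an equivalence on underlying topological spaces. The proposition therefore reduces to the single claim that $\rho^*$ sends the pushout $\cO_B\sqcup_{\cO_A}\cO_C$ to $\rho^*\cO_B\sqcup_{\rho^*\cO_A}\rho^*\cO_C$, that is, that restriction along $\rho$ preserves this particular pushout of structures. This fails for a general pushout: $\rho^*$ does not preserve coproducts, the standard witness being that the coproduct of the $k$-smooth ring $\cO(k)$ with itself is $\cO(k^2)$ rather than the tensor product of the underlying algebras. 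The reason is that colimits of structures are computed as the levelwise colimit in $\bFun(\Man_k,\Sh)$ followed by the reflection onto product preserving functors, and it is exactly this reflection, which interpolates the smooth functions, that obstructs the levelwise functor $\rho^*$.

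The hard part, and the heart of the matter, is to show that the effective epimorphism hypothesis removes precisely this obstruction. Locally the effective epimorphism $\cO_A\ra\cO_B$ presents $\cO_B$ as the (derived) quotient of $\cO_A$ by the zero locus of finitely many $k$-smooth functions, so the pushout $\cO_B\sqcup_{\cO_A}\cO_C$ is a base change that quotients $\cO_C$ by the ideal generated by the images of those functions. I would then invoke the Hadamard lemma: in the situation at hand the ideal generated by such a system as a $k$-smooth ideal coincides with the ideal it generates algebraically, so the $k$-smooth quotient and the underlying algebraic quotient have the same underlying $\bcdga_k$-valued sheaf. Consequently $\rho^*$ preserves this quotient, which is the pushout in question. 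I would make this precise by first reducing to the discrete case through the functor $\pi_0$, where Hadamard applies directly to $k$-smooth rings, and then bootstrapping to the simplicial and sheaf levels using that the relevant quotients are computed by sifted colimits and finite limits, both of which are levelwise and hence preserved by $\rho^*$. Combining the equivalence on structure sheaves with the identification of underlying spaces shows the comparison is an equivalence, so the algebraic model functor preserves the given pullback.
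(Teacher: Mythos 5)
Your proposal is correct in substance but takes a genuinely different route from the paper. The paper's proof is two citations to Lurie's theory of structured spaces: the transformation of pregeometries $\cP_k\ra\Man_k$ is \emph{unramified} (Proposition~11.10 of \cite{LIX}), and unramified transformations have restriction functors preserving pullbacks along morphisms that are effective epimorphisms on structure sheaves (Proposition~10.3 of \emph{loc.\ cit.}); the equivalence $\bTop(\cP_k)\simeq\textbf{dgTop}_k$ then finishes the argument. You instead unpack the mathematical content hiding behind those citations: you correctly identify that $\rho^*$ fails to preserve general colimits of structures (your $\cO(k)\sqcup\cO(k)\simeq\cO(k^2)$ example is exactly the right witness), that the effective epimorphism hypothesis reduces the problem to a base change presenting $\cO_B$ locally as a quotient of $\cO_A$ by finitely many functions, and that Hadamard's lemma is the reason the $k$-smooth and algebraic quotients agree. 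This is indeed the argument underlying Lurie's unramifiedness statement, so your proof buys self-containedness and an explanation of \emph{why} the hypothesis is needed, at the cost of redoing work the paper outsources. The one place your sketch needs more care is the passage from the discrete Hadamard computation to the derived level: the pushout $\cO_B\sqcup_{\cO_A}\cO_C$ is not itself a sifted colimit, so ``sifted colimits are levelwise'' does not directly apply; you must first use local finite presentation of derived $k$-manifolds to write the closed immersion as a base change of the single universal map $\cO(k^n)\ra\cO(k^0)$, verify the comparison in that one case (where the regular-sequence/Koszul structure controls the higher homotopy on both sides), and only then propagate by base change and gluing. That is precisely the bookkeeping that Lurie's notion of unramified transformation is designed to package, which is presumably why the paper prefers the citation.
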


\begin{proof}
From Proposition 11.10 of \cite{LIX}, the natural map $\cP_k\ra \Man_k$ is unramified.  As a result of Proposition 10.3 of \textit{loc.cit.}, the functor
\[     \rho^*:\bTop^\loc(\Man_k)\ra\bTop(\cP_k)    \]
preserves pullbacks of diagrams of the form stated in the proposition.  The result now follows from the equivalence between $\bTop(\cP_k)$ and $\tu{\textbf{dgTop}}_k$.
\end{proof}

\begin{ex}
It follows from Proposition~\ref{finitelimits} that the natural category in which to consider the fiber product of two $k$-manifolds is $\dMan_k$.  Let $M$, $N$ and $P$ be three $k$-manifolds.  The fiber product $Q=(|Q|,\cO_Q):=M\times_P N$ is a derived $k$-manifold.  When the condition of Proposition~\ref{preservespullbacks} is satisfied, this corresponds to a pullback in the $\infty$-category of dg-ringed topological spaces over $k$.  We can assign the following model theoretic interpretation of this object. 

We define a model structure on the underlying category $\tu{dgTop}_k$ of dg-ringed topological spaces where a morphism $(|A|,\cO_A^\alg)\ra(|B|,\cO_B^\alg)$ is a fibration (resp. weak equivalence) if and only if $\cO_A^\alg\ra\cO_B^\alg$ is a fibration (resp. weak equivalence) in the model category $\Sh_{\cdga_k}(A)$ of sheaves with the pointwise projective model structure.  Then there exists a natural equivalence
\[     \textbf{dgTop}_k\ra L(\tu{dgTop}_k)   \]
of $\infty$-categories.  Therefore one can calculate the pullback using a homotopy pullback in the model category $\tu{dgTop}_k$.  In general, pullbacks of derived $k$-manifolds satisfying the condition of Proposition~\ref{preservespullbacks} have a convenient algebraic model theoretic interpretation. 
\end{ex}

\begin{ex}
The functor $\cP_k\ra\Man_k$ induces a functor $\Str_{\Man_k}(*)\ra\Str_{\cP_k}(*)$ and thus, by the above discussion, a natural map
\[  \textbf{sSmRng}_k\ra\bcdga_k  \] 
from simplicial $k$-smooth rings to commutative differential graded algebras over $k$.  Similarly, there is a natural map from local simplicial $k$-smooth rings to coconnective commutative dg-algebras over $k$.
\end{ex}

\begin{prop}
The $\infty$-category $\tu{\bTop}^{\loc}(\Man_k)$ of local $k$-manifold structured spaces is equivalent to the subcategory of $k$-manifold structured spaces spanned by pairs $(|A|,\cO_A)$ such that~: 
\begin{enumerate}
\item The discrete algebraic structure sheaf $\pi_0(\cO_A^\alg)$ is a sheaf of local $k$-algebras.
\item For any morphism $f:(|A|,\cO_A)\ra(|B|,\cO_B)$ of such objects, the induced morphism $\pi_0(f^*\cO_B^\alg)\ra\pi_0(\cO_A^\alg)$ of sheaves of local $k$-algebras is a local morphism of $k$-algebras.
\end{enumerate}
\end{prop}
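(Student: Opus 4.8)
The plan is to deduce this characterization from the general comparison theory for structured spaces developed in \cite{LIX}, using the morphism $\rho:\cP_k\ra\Man_k$ and the conservative functor $\rho^*$ to transport the locality condition of Definition~\ref{local} into a ring-theoretic condition on the algebraic model $\cO_A^\alg$. The two enumerated conditions are precisely the images, under $\rho^*$, of the general principle that a structure is local exactly when its underlying sheaf of commutative rings is local, and that a morphism of local structures is local exactly when the induced map on underlying discrete ring sheaves is a local homomorphism. Since $\rho^*$ is conservative (Proposition 11.9 of \cite{LIX}), the transported conditions lose no information, so it suffices to match objects and morphisms on each side separately.

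First I would treat the object-level condition. By Definition~\ref{local}, a $k$-manifold structure $\cO_A$ is local precisely when it carries every admissible covering sieve $\{N_i\ra M\}$ to an effective epimorphism $\coprod_i\cO_A(N_i)\ra\cO_A(M)$. Because the admissible maps in $\Man_k$ are the open immersions and the topology is generated by open covers, it suffices to test this on the covers of the affine models $\rho(E)=\Hom_k(E,k)$ for $E\in\cP_k$. Transporting along $\rho^*$, which is well behaved because $\cP_k\ra\Man_k$ is unramified (Proposition 11.10 of \cite{LIX}), the effective-epimorphism condition becomes the assertion that the discrete sheaf $\pi_0(\cO_A^\alg)$ detects invertibility pointwise, i.e. a section is a unit on an open set if and only if its germ is a unit at each point. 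This is exactly the statement that $\pi_0(\cO_A^\alg)$ is a sheaf of local $k$-algebras, giving condition (1). The reverse implication reconstructs the effective-epimorphism condition for a general admissible cover from the local-ring condition, using that every admissible map is locally an open embedding $\alpha_V$ of the form built in the proof of Proposition~\ref{fff}.

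Next I would treat the morphism-level condition. By the definition of $\Str^\loc_{\Man_k}(\sM)$, a map $f:\cO_A\ra\cO_B$ is a morphism of local structures precisely when the naturality square attached to every admissible $N\ra M$ is a pullback. Applying $\rho^*$ and passing to $\pi_0$, these pullback conditions translate into the requirement that $\pi_0(f^*\cO_B^\alg)\ra\pi_0(\cO_A^\alg)$ reflects invertibility, which is the statement that it is a local homomorphism of sheaves of local $k$-algebras; this is condition (2). Conservativity of $\rho^*$ then guarantees that the two subcategories have the same objects and the same mapping spaces, yielding the claimed equivalence of $\infty$-categories.

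The hard part will be the object-level equivalence: verifying that the purely sheaf-theoretic locality condition of Definition~\ref{local}, phrased through effective epimorphisms of admissible covering sieves, transports under $\rho^*$ precisely to the ring-theoretic local-ring condition. The subtlety is that $\cP_k$ carries only the trivial topology, so the local-ring condition on $\pi_0(\cO_A^\alg)$ is not itself a sheaf condition for $\cP_k$ but rather an avatar of the admissible topology on $\Man_k$; controlling this transport is exactly where the unramifiedness of $\cP_k\ra\Man_k$ together with the pullback-preservation of $\rho^*$ (Propositions 10.3 and 11.10 of \cite{LIX}) are indispensable.
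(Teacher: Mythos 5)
Your overall strategy --- translating the locality condition of Definition~\ref{local} into a ring-theoretic condition on $\pi_0(\cO_A^\alg)$ --- is the same as the paper's, but the paper executes it by passing to stalks: a $k$-manifold structure is local if and only if each stalk $p^*(\cO_A)$ is a local simplicial $k$-smooth ring, effective epimorphisms in $\sSet$ are precisely the $\pi_0$-surjections, and a discrete local $k$-smooth ring is one whose underlying $k$-algebra is a local ring. Your route through covers of the affine models and the functor $\rho^*$ could in principle be made to work, but as written it has two genuine gaps.

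First, the central translation fails. You assert that the effective-epimorphism condition for admissible covers becomes the statement that ``a section is a unit on an open set if and only if its germ is a unit at each point,'' and that this is the statement that $\pi_0(\cO_A^\alg)$ is a sheaf of local $k$-algebras. But that unit-detection property holds for \emph{every} sheaf of commutative rings (invertibility is local and local inverses glue), so it is vacuous and does not single out the sheaves with local stalks. What you actually need is that the non-units in each stalk form an ideal, and extracting that from Definition~\ref{local} requires choosing a specific admissible cover of $k$ (for instance by $\{t\neq 0\}$ and a complementary open) and using the pullback condition in the definition of $\Str^\loc_{\Man_k}$ to identify $\cO_A(k\setminus\{0\})$ with the invertible sections of $\cO_A(k)$; none of this appears in your argument. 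Second, the concluding appeal to conservativity of $\rho^*$ is a non sequitur: a conservative functor reflects equivalences, but it does not identify subcategories or mapping spaces, so it cannot by itself ``guarantee that the two subcategories have the same objects and the same mapping spaces.'' If the object-level and morphism-level translations were each established as genuine biconditionals, the proposition would follow with no appeal to conservativity at all; as it stands, that appeal is doing no work and is masking the missing converse directions. Relatedly, unramifiedness of $\cP_k\ra\Man_k$ is what the paper invokes for Proposition~\ref{preservespullbacks}, not here; it is not the mechanism that converts the sheaf-theoretic locality condition into the local-ring condition.
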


\begin{proof}
For the first part, a $k$-manifold structured space $(|A|,\cO_A)$ is local if and only if, for every point $p:*\ra |A|$, the stalk $p^*(\cO_A)$ is a local simplicial $k$-smooth ring.  A map in the model category $\sSet$ is an effective epimorphism if and only if it is surjective on connected components.  Therefore the (discrete) $k$-smooth ring $\pi_0(p^*(\cO_A))$ is local.  

For any morphism $f:(|A|,\cO_A)\ra(|B|,\cO_B)$ of such objects, the induced morphism on discrete algebraic stalks $\pi_0(p^*(f^*\cO_B^\alg))\ra\pi_0(p^*(\cO_A^\alg))$ is a local morphism of $k$-algebras.  
\end{proof}

\begin{ex}\label{bk}
A \textit{derived Lie group} over $k$ is a group object in the $\infty$-category of derived $k$-manifolds.  The fully faithful functor $\Spec^{\Man_k}$ (precomposed with the inclusion) induces a fully faithful functor
\[   \textbf{Lie}_k\ra\textbf{dLie}_k  \]
between the $\infty$-category $\textbf{Lie}_k:=\textbf{Gp}(\bMan_k)$ of Lie groups over $k$ and the $\infty$-category $\textbf{dLie}_k:=\textbf{Gp}(\dMan_k)$ of derived Lie groups over $k$.  Formally, group objects exist in any $\infty$-category admitting fiber products so we have an equivalence
\[     \textbf{dLie}_k\ra\coprod_n n\mbox{-}\textbf{dLie}_k     \]
of $\infty$-categories where $n\mbox{-}\textbf{dLie}_k:=\textbf{Gp}(n\mbox{-}\dMan_k)$ is the $\infty$-category of $n$-quasi smooth derived Lie groups over $k$.
\end{ex}

\begin{dfn} 
Let $A$ be a derived $k$-manifold.  Then $A$ is said to be \textit{smooth} if there exists an equivalence 
\[    A\ra\Spec^{\Man_k}M     \]
of derived $k$-manifolds for a $k$-manifold $M$.  
\end{dfn}

The following result shows that every derived $k$-manifold can be embedded into a smooth derived $k$-manifold.

\begin{prop}
Let $A=(|A|,\cO_A)$ be a derived $k$-manifold.  Then there exists a closed immersion
\[   A\ra\Spec^{\Man_k}(k^n)  \] 
for some $n$.
\end{prop}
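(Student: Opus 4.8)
The plan is to reduce to the classical Whitney embedding theorem for $k$-manifolds, exploiting the fact that the defining limits of a principal derived $k$-manifold are assembled from diagonals, which are closed immersions.

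First I would treat the principal case. Suppose $A\simeq\lim_i M_i$ for a finite diagram $M:I\ra\Man_k$. Since finite limits are generated by finite products and pullbacks, and since any pullback $M\times_P N$ may be rewritten, exactly as in the proof of Proposition~\ref{finitelimits}, as $(M\times N)\times_{P\times P}P$ along the diagonal $d:P\ra P\times P$, I would show that the canonical projection $A\ra\prod_i M_i$ is a closed immersion. The diagonal $d$ is a closed immersion, and closed immersions are stable under base change in $\bTop^\loc(\Man_k)$: the underlying map is a base change of a homeomorphism onto a closed subset, while the structure sheaf map is a base change of an effective epimorphism, hence again an effective epimorphism. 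Iterating this over the finite diagram yields a closed immersion $A\ra\prod_i M_i$. The product $\prod_i M_i$ is itself a $k$-manifold, so the classical Whitney embedding theorem supplies a closed embedding $\prod_i M_i\hookrightarrow k^m$, which becomes a closed immersion under $\Spec^{\Man_k}$. Composing the two closed immersions produces a closed immersion $A\ra\Spec^{\Man_k}(k^m)$.

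Next I would globalize. For a general derived $k$-manifold $A$, the underlying space $|A|$ is Hausdorff and second countable, being locally a closed subset of a product of manifolds, so I would choose a countable, locally finite cover $\{U_i\}$ by principal derived open subspaces together with a subordinate partition of unity $\{\rho_i\}$ in $\Gamma^\alg(A)$. Each $U_i$ carries a closed immersion $g_i:U_i\ra\Spec^{\Man_k}(k^{n_i})$ by the principal case. Following the classical Whitney argument, I would assemble from the functions $\rho_i$ and the products $\rho_i\cdot g_i$ a single global map $f:A\ra\Spec^{\Man_k}(k^n)$ with finite $n$; the partition of unity separates the patches and forces the underlying map of $f$ to be a proper injective immersion onto a closed subset of $k^n$, while on each $U_i$ the map factors through $g_i$, so the structure sheaf map restricts to an effective epimorphism there. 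Because being an effective epimorphism is local on $|A|$, the global structure sheaf map $f^*\cO_{\Spec^{\Man_k}(k^n)}\ra\cO_A$ is an effective epimorphism, and hence $f$ is a closed immersion.

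The main obstacle is the global step: assembling the local closed immersions into a single closed immersion into a \emph{finite}-dimensional $k^n$ while preserving the effective epimorphism condition. Keeping $n$ finite requires a Whitney-type properness and dimension estimate controlled by the underlying second countable space $|A|$, and one must verify that the partition-of-unity construction, which is transparent on underlying topological spaces, genuinely induces the correct effective epimorphism of simplicial $k$-smooth structure sheaves rather than merely a surjection on ordinary functions.
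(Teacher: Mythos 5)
Your route is sound for $k=\bb{R}$, but it is genuinely different from the paper's. The paper argues by induction on the quasi-smoothness level: the base case of a $1$-quasi-smooth derived $k$-manifold is delegated entirely to Theorem~6.1.5 of \cite{Sp1}, and in the inductive step a fiber product $A\simeq B\times_{B''}B'$ of $n$-quasi-smooth pieces is closed-immersed into the pullback $X$ of the diagram $\Spec^{\Man_k}(k^N)\ra\Spec^{\Man_k}(k^{N''})\la\Spec^{\Man_k}(k^{N'})$ formed from the inductively given embeddings; this $X$ is itself $1$-quasi-smooth, so the base case applies to it, and closedness under composition finishes. You instead flatten the whole finite limit at once into a product of manifolds via diagonals, invoke the classical Whitney embedding theorem for that product, and then perform the local-to-global passage by hand with a partition of unity. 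What your approach buys is an honest treatment of the globalization step, which the paper outsources to Spivak in the base case and quietly elides in the inductive step (where $A$ is treated as a global fiber product although the definition only provides this locally). What the paper's approach buys is that all the analytic input --- properness, the dimension count, and the effective-epimorphism condition on derived structure sheaves --- is needed only once, inside an already-established theorem about $1$-quasi-smooth derived manifolds.

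One point deserves to be flagged as a real obstruction rather than a technicality: partitions of unity and the Whitney embedding theorem are $\cC^\infty$ phenomena with no holomorphic analogue, so your argument cannot work as written for $k=\bb{C}$ (a compact complex manifold admits no closed holomorphic embedding into $\bb{C}^n$). The paper's own proof inherits the same restriction through its reliance on \cite{Sp1}, which concerns $\cC^\infty$ derived manifolds, but your write-up should say explicitly that the construction is carried out for $k=\bb{R}$. A smaller point: the claim that the base change of a closed immersion again has an effective epimorphism on structure sheaves is not formal, since the structure sheaf of a pullback of structured spaces is built from a pushout of structure sheaves; this stability is exactly what \cite{LIX} supplies and should be cited rather than asserted.
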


\begin{proof}
We will work by induction.  Consider the diagram
\begin{diagram}
A &\rTo  & B'\\
\dTo & &\dTo\\
B  &\rTo  &B''
\end{diagram}
in the $\infty$-category $\bTop^\loc(\Man_k)$ of local $k$-manifold structured spaces.  If $B$, $B'$ and $B''$ are $k$-manifolds then $A$ is a $1$-quasi smooth derived $k$-manifold and, by Theorem~6.1.5 of \cite{Sp1}, there exists a closed immersion $A\ra\Spec^{\Man_k}(k^N)$ for some $N$.  

Now let $B$, $B'$ and $B''$ be $n$-quasi smooth derived $k$-manifolds and assume that there exists closed immersions $B\ra\Spec^{\Man_k}(k^N)$, $B'\ra\Spec^{\Man_k}(k^{N'})$ and $B''\ra\Spec^{\Man_k}(k^{N''})$.  We need to show that the $(n+1)$-quasi smooth derived $k$-manifold $A$ admits a closed immersion into something smooth.  However, there exists a closed immersion $A\ra X$ where $X$ is the pullback of the diagram
\[     \Spec^{\Man_k}(k^N)\ra\Spec^{\Man_k}(k^{N''})\leftarrow\Spec^{\Man_k}(k^{N'})   \]
in $\bTop^\loc(\Man_k)$.  The derived $k$-manifold $X$ is itself $1$-quasi smooth so there exists a closed immersion $X\ra\Spec^{\Man_k}(k^M)$ for some $M$.  Closed immersions are stable under composition so there exits a closed immersion $A\ra\Spec^{\Man_k}(k^M)$. 
\end{proof}

\section{Derived smooth stacks}\label{dsas}

In Section~\ref{dm} we defined the notion of a derived $k$-manifold and showed that the $\infty$-category of such objects admits some nice formal properties.  In addition, the fully faithful functor from the category of $k$-manifolds into derived $k$-manifolds preserves the correct geometric structure.  Moreover, under certain conditions, the structure of a derived $k$-manifold can be probed using a purely algebraic model.  

In this section we introduce the $\infty$-category of derived $k$-smooth stacks which contains spaces arising from moduli problems which are not contained in the $\infty$-category of derived $k$-manifolds.  We begin by stating which objects we regard as being affine.

\begin{dfn}
A local $k$-manifold structured space $A=(|A|,\cO_A)$ is said to be a \textit{$1$-quasi smooth affine derived $k$-manifold} if there exists a fiber product
\begin{diagram}
A  &\rTo  & k^0\\
\dTo &     &\dTo_0 \\
k^m &\rTo^{f} & k^{m'}
\end{diagram}
in the $\infty$-category $\bTop^\loc(\Man_k)$ of local $k$-manifold structured spaces for some $k$-smooth function $f:k^m\ra k^{m'}$.  Given a $n$-quasi smooth affine derived $k$-manifold, a \textit{$(n+1)$-quasi smooth affine derived $k$-manifold} is a fiber product of $n$-quasi smooth affine derived $k$-manifolds.  An \textit{affine derived $k$-manifold} is a $n$-quasi smooth affine derived $k$-manifold for some $n$.
\end{dfn}

The full subcategory of $\bTop^\loc(\Man_k)$ spanned by affine derived $k$-manifolds is denoted $\bdAff_k$.  The $\infty$-category $\bdAff_k$ of affine derived $k$-manifolds can be given the structure of an $\infty$-site using the following notion of an \'etale morphism between local $k$-manifold structured spaces.  

\begin{dfn}\label{coveringmaps}
A morphism $(|A|,\cO_A)\ra(|B|,\cO_B)$ between affine derived $k$-manifolds is said to be \textit{\'etale} if 
\begin{enumerate}
\item The underlying morphism $f:|A|\ra |B|$ of topological spaces is a local homeomorphism.
\item The map $f^*\cO_B\ra\cO_A$ is an equivalence in $\Str^\loc_{\Man_k}(A)$.
\end{enumerate}
\end{dfn}

The \'etale maps between affine derived $k$-manifolds are stable by composition, finite limits and equivalences.  Therefore the \'etale maps generate a topology on the $\infty$-category of affine derived $k$-manifolds.  

The $\infty$-site of affine derived $k$-manifolds with the \'etale topology will be denoted $(\bdAff_k,et)$.  This is the $\infty$-site on which we define our notion of derived stack in the $k$-smooth setting.

\begin{dfn}
A \textit{derived $k$-smooth stack} is a stack on the $\infty$-site $(\bdAff_k,et)$.
\end{dfn}

Let $\dSmSt_k:=\bSh(\bdAff_k,et)$ denote the full subcategory of $\textbf{Pr}(\bdAff_k)$ spanned by derived $k$-smooth stacks.  

We will now show that the \'etale topology on the $\infty$-category of affine derived $k$-manifolds is subcanonical.  To do so we introduce an appropriate $\infty$-category of modules over an affine derived $k$-manifold.  

Sheaves on a topological space $|A|$ with values in a simplicial model category have an induced pointwise injective simplical model category structure.  Therefore the category of modules over a commutative monoid object $F$ in $\Sh_{\dg_k}(A)$ has a natural simplicial model structure \cite{SS}.  We define
\[   \bMod(F):=L(\Mod_F(\Sh_{\dg_k}(A)) ) \]
to be the $\infty$-category of modules over $F$.  Note that we can identify $F$ as an object in the $\infty$-category $\bSh_{\bcdga_k}(A)$ of sheaves of coconnective commutative dg-algebras.

One can give another characterization of the $\infty$-category of $F$-modules using spectra.  More precisely, given any $\infty$-category $C$ with finite limits, recall that one can define the $\infty$-category $\bSp(C)$ of $\Omega$-spectrum objects in $C$ where $\Omega$ is the loop space endofunctor. 

\begin{prop}\label{modisspectra}
Let $|A|$ be a topological space and $F$ a commutative monoid object in the category of sheaves of dg-modules on $|A|$.  Then there exists an equivalence
\[     \bMod(F)\ra\bSp(\textbf{\tu{Sh}}_{\bcdga_k}(A)_{/F})  \]
of $\infty$-categories.
\end{prop}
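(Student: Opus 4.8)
The plan is to deduce the statement from the general identification, via the cotangent complex formalism, of the $\infty$-category of modules over a commutative algebra object with the stabilization of the slice of commutative algebra objects, applied to the ambient $\infty$-category of sheaves of dg-modules on $|A|$.

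First I would fix the symmetric monoidal setting. The $\infty$-category $\mathcal{C}:=\bSh_{\bdg_k}(A)$ of sheaves of dg-modules on $|A|$ is presentable and stable, and carries the symmetric monoidal structure given by the sheafified pointwise tensor product, compatible with colimits in each variable. Its commutative algebra objects recover sheaves of commutative dg-algebras, under which $\bSh_{\bcdga_k}(A)$ is identified with the full subcategory $\CMon(\mathcal{C})^{\leq 0}$ of coconnective (equivalently, homologically connective) algebra objects, and $F$ becomes a commutative algebra object of $\mathcal{C}$. By construction $\bMod(F)\simeq\Mod_F(\mathcal{C})$ is the $\infty$-category of $F$-modules internal to $\mathcal{C}$; since $\dg_k$ is unbounded this is the full, unbounded module category. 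Because all the relevant operations are computed pointwise and are preserved by sheafification, the sheaf-theoretic content is entirely absorbed into the choice of $\mathcal{C}$ and no separate globalization over $|A|$ is required.

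Next I would apply the tangent/cotangent complex theorem (see \cite{L1}). For a presentable symmetric monoidal $\infty$-category there is a canonical equivalence
\[   \Mod_F(\mathcal{C})\xras\bSp(\CMon(\mathcal{C})_{/F})   \]
sending an $F$-module $M$ to the spectrum object whose $n$-th term is the trivial square-zero extension $F\oplus M[n]$, pointed by the augmentation $F\oplus M[n]\ra F$. The required structure equivalences $F\oplus M[n]\xras\Omega(F\oplus M[n+1])$ come from the computation $\Omega(F\oplus N)\simeq F\oplus N[-1]$ of the loop functor on trivial square-zero extensions in the pointed slice $\CMon(\mathcal{C})_{F//F}$.

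It then remains to replace the slice of all algebra objects by the coconnective slice $\bSh_{\bcdga_k}(A)_{/F}$, and this is the step I expect to be the main obstacle. The coconnective algebra objects are not closed under the loop functor computed in $\CMon(\mathcal{C})$, so the pointed coconnective slice $\bSh_{\bcdga_k}(A)_{F//F}$ has its own finite limits, in which the loop functor is the truncated loop $F\oplus N\mapsto F\oplus\tau_{\geq 0}(N[-1])$. The key point I would establish is that stabilizing a coconnective (connective) category recovers the full stable category: a spectrum object is a tower $(X_n)$ with $X_n\simeq\Omega X_{n+1}$, and the truncations become invisible in the limit, so that $\bSp(\bSh_{\bcdga_k}(A)_{/F})$ is identified with the stabilization of the connective module category, hence with all of $\Mod_F$. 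Combining this with the previous step yields the desired equivalence $\bMod(F)\xras\bSp(\bSh_{\bcdga_k}(A)_{/F})$, and tracing through the construction shows it is the canonical one sending a module to its sequence of square-zero extensions.
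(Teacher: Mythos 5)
Your argument is correct, but it is organized differently from the paper's. The paper proves the statement entirely at the level of model categories, by exhibiting an explicit chain of Quillen equivalences
\[ \Sp(\Sh_{\cdga_k}(A)_{/F})\simeq\Sp(\Sh_{\cdga_k}(A)_{F//F})\simeq\Sp(\CMon(\Mod(F))_{/F})\simeq\Sp(\CMon^{nu}(\Mod(F)))\simeq\Sp(\Mod(F))\simeq\Mod(F), \]
citing Hovey for spectra in model categories, Lemma~7.3.3.9 of \cite{L2} for the passage from the slice to the pointed slice, Lemma~1.2.1.3 of \cite{TVII} for the identification of augmented algebras with non-unital algebras, and stability of $\Mod(F)$ for the last step; the ``stabilization only sees the linear part'' phenomenon is implemented concretely at the non-unital-algebra stage. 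You instead invoke the tangent-category theorem ($\Mod_F(\mathcal{C})\simeq\bSp(\CAlg(\mathcal{C})_{/F})$ for a presentable symmetric monoidal $\mathcal{C}$) as a black box and then separately verify that replacing the full slice of algebra objects by the coconnective slice does not change the stabilization. The underlying mathematics is the same --- the paper's chain is essentially a hands-on proof of the theorem you cite, specialized to $\mathcal{C}=\bSh_{\bdg_k}(A)$ --- but your version is shorter and, usefully, makes explicit the coconnectivity issue (that spectrum objects in the truncated slice still recover the unbounded module category), a point the paper's chain passes over silently in its second equivalence. Two small corrections: the tangent-category theorem is Theorem~7.3.4.13/7.3.4.14 of \emph{Higher Algebra} \cite{L2}, not \cite{L1}; and your reduction from coconnective algebras to connective modules (``the truncations become invisible in the limit'') should be backed by the same square-zero/augmentation-ideal argument you use in the unbounded case, applied to the connective categories, rather than asserted --- as written it is a sketch of the right argument but not yet the argument.
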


\begin{proof}
A concrete model for the right hand side is the localization $L(\Sp(\Sh_{\cdga_k}(A)_{/F}))$ of the stable model category of $\Omega$-spectrum objects in $\Sh_{\cdga_k}(A)_{/F}$ (see \cite{Ho}).  There exists a chain of Quillen equivalences
\[  \Sp(\Sh_{\cdga_k}(A)_{/F})\xras\Sp((\Sh_{\cdga_k}(A)_{F//F})\xras\Sp(\CMon(\Mod(F))_{/F})\xras    \]
\[    \Sp(\CMon^{nu}(\Mod(F))\xras\Sp(\Mod(F))\xras\Mod(F)  \]
of model categories where the superscript \textit{nu} refers to non-unital algebras.  The first equivalence can be deduced from Lemma~7.3.3.9 of \cite{L2}, the third equivalence follows from Lemma~1.2.1.3 of \cite{TVII} and the fifth equivalence from the fact that the model category of $F$-modules is a stable model category.
\end{proof}

It follows that the $\infty$-category of $F$-modules is a stable $\infty$-category.  Using this characterization of modules as spectrum objects in Proposition~\ref{modisspectra}, the $\infty$-category of $\cO_A$-modules over a local $k$-manifold structure $\cO_A$ will be defined as
\[   \bMod(\cO_A):=\bSp(\Str_{\Man_k}^{\tu{loc}}(A)_{/\cO_A}).  \]

Another convenient description of the $\infty$-category of $\cO_A$-modules is the following, where we denote by $\tu{Sp}(\sM)$ the model category of spectra in a model category $\sM$.  We denote by $\tu{Sp}:=\tu{Sp}(\sSet_*)$ the model category of spectra with respect to the positive stable model structure of \cite{Sh}.

\begin{lem}
Let $|A|$ be a topological space and $\cO_A$ a $k$-manifold structure on $|A|$.  There exists an equivalence
\[     \bMod(\cO_A)\ra\tu{\bSh}_{\tu{\Str}^\loc_{\Man_k}(\tu{Sp})}(A)_{/\cO_A}  \]
of $\infty$-categories.
\end{lem}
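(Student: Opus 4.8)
The plan is to deduce the statement from Lemma~\ref{strtosh} together with the compatibility of stabilization with the constructions entering the definition of $\bMod(\cO_A)$. By definition $\bMod(\cO_A)=\bSp(\Str^\loc_{\Man_k}(A)_{/\cO_A})$, where $\Str^\loc_{\Man_k}(A)=\Str^\loc_{\Man_k}(\Sh_{\sSet}(A))$. First I would apply Lemma~\ref{strtosh} with $\sM=\sSet$ to obtain an equivalence $\Str^\loc_{\Man_k}(A)\simeq\bSh_{\Str^\loc_{\Man_k}(\sSet)}(A)$, and then pass to slices over $\cO_A$. This reduces the problem to computing the stabilization $\bSp(\bSh_{\Str^\loc_{\Man_k}(\sSet)}(A)_{/\cO_A})$ and identifying it with $\bSh_{\Str^\loc_{\Man_k}(\tu{Sp})}(A)_{/\cO_A}$.

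The main work is to commute $\bSp$ past the sheaf construction and the local $k$-manifold structure conditions. Realizing $\bSp$ as the functor $-\otimes\tu{Sp}$ on the symmetric monoidal $\infty$-category of presentable $\infty$-categories and left adjoints, it preserves limits computed along right adjoints; and all the defining conditions of a local $k$-manifold structure, namely finite product preservation and descent along admissible covers (Definitions~\ref{pkstructure} and~\ref{local}), together with the sheaf condition on $|A|$, are limit conditions. Consequently stabilization carries $\Str^\loc_{\Man_k}(\sSet)$-valued sheaves to $\Str^\loc_{\Man_k}(\bSp(\sSet))$-valued sheaves, and since $\bSp(\sSet)=\tu{Sp}$ this is exactly $\bSh_{\Str^\loc_{\Man_k}(\tu{Sp})}(A)$. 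Concretely one checks $\bSp(\Str^\loc_{\Man_k}(\sSet))\simeq\Str^\loc_{\Man_k}(\tu{Sp})$: a spectrum object in $\bFun(\Man_k,\sSet)$ preserves products and admissible pullbacks levelwise precisely when the associated functor $\Man_k\ra\tu{Sp}$ does, so the structure conditions cut out a full subcategory closed under the limits through which $\bSp$ is computed.

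The slice over $\cO_A$ is handled by the tangent bundle formalism: for a presentable $\infty$-category $\mathcal{E}$ and an object $X$, the stabilization $\bSp(\mathcal{E}_{/X})$ is the fibre over $X$ of the tangent bundle $T\mathcal{E}\ra\mathcal{E}$. Applying this with $\mathcal{E}=\bSh_{\Str^\loc_{\Man_k}(\sSet)}(A)$ and $X=\cO_A$, and using that the formation of tangent bundles commutes with the equivalences above (tangent bundles being assembled from stabilizations of slices, hence from limits of presentable categories), the fibre at $\cO_A$ is identified with $\bSh_{\Str^\loc_{\Man_k}(\tu{Sp})}(A)_{/\cO_A}$. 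Chaining the equivalences yields $\bMod(\cO_A)\simeq\bSh_{\Str^\loc_{\Man_k}(\tu{Sp})}(A)_{/\cO_A}$, in parallel with the dg-algebra computation of Proposition~\ref{modisspectra}.

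The hard part will be the slice. Since $\tu{Sp}$ is stable, the category $\bSh_{\Str^\loc_{\Man_k}(\tu{Sp})}(A)$ is itself stable, and so one must be careful that the symbol $_{/\cO_A}$ is not read as a naive over-category, which would collapse to the entire stable category and destroy all dependence on $\cO_A$; rather it must denote the fibre of the tangent bundle projecting onto the $\sSet$-valued structures, that is, the module category genuinely depending on $\cO_A$ through the adjunction relating $\sSet$-valued and $\tu{Sp}$-valued structures. Making this identification precise, and verifying that the tangent fibre obtained by commuting $\bSp$ through Lemma~\ref{strtosh} agrees with this fibre compatibly with the base object $\cO_A$, is the delicate step; the remainder is a formal manipulation of limit-preserving functors between presentable $\infty$-categories.
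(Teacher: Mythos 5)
Your argument is essentially the paper's: its proof is the same chain of equivalences --- commuting $\bSp$ past the slice, past the structure conditions of Definitions~\ref{pkstructure} and~\ref{local}, and past the sheaf condition on $|A|$, finishing with Lemma~\ref{strtosh} applied to $\sM=\tu{Sp}$ --- only with the order of the commutations permuted (you invoke Lemma~\ref{strtosh} first at the $\sSet$ level, the paper invokes it last at the $\tu{Sp}$ level). Your caveat about reading $_{/\cO_A}$ on the stable side as the tangent fibre rather than a naive overcategory is precisely what the paper's phrase ``by abuse of notation, we have denoted by $\cO_A$ each of the relevant constant functors'' is meant to cover, so on the one delicate point you are, if anything, more explicit than the original.
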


\begin{proof}
We have a chain of equivalences
\[   \bSp(\Str^\loc_{\Man_k}(A)_{/\cO_A})\xras\bSp(\Str^\loc_{\Man_k}(A))_{/\cO_A}\xras\Str^\loc_{\Man_k}(\Sp(\Sh_{\sSet}(A)))_{/\cO_A}\xras   \]
\[  \Str^\loc_{\Man_k}(\Sh_\Sp(A))_{/\cO_A}\xras\tu{\bSh}_{\Str^\loc_{\Man_k}(\tu{Sp})}(A)_{/\cO_A}  \]
of $\infty$-categories, where by abuse of notation, we have denoted by $\cO_A$ each of the relevant constant functors.
\end{proof}

It is often easier to work with modules over the underlying algebraic model of a $k$-manifold structure and the following result shows that we lose no information when making this choice.

\begin{prop}\label{modequivalence}
Let $|A|$ be a topological space and $\cO_A$ a $k$-manifold structure on $|A|$.  There exists an equivalence
\[    \psi:\bMod(\cO_A)\ra\bMod(\cO_A^\alg)  \]
of $\infty$-categories 
\end{prop}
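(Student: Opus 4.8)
The plan is to exhibit both sides as stabilizations of categories of structures over a fixed object and to obtain the comparison from the transformation $\rho\colon\cP_k\ra\Man_k$. The left-hand side is $\bMod(\cO_A)=\bSp(\Str^\loc_{\Man_k}(A)_{/\cO_A})$ by definition. For the right-hand side, the equivalence $\Str_{\cP_k}(A)\xras\bSh_{\bcdga_k}(A)$ established before Proposition~\ref{preservespullbacks} identifies $\cP_k$-structures with sheaves of coconnective commutative dg-algebras, and combining it with Proposition~\ref{modisspectra} gives $\bMod(\cO_A^\alg)\simeq\bSp(\Str_{\cP_k}(A)_{/\cO_A^\alg})$. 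Since $\cP_k$ carries the trivial topology, every $\cP_k$-structure is local, so no locality condition is imposed on this side.

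First I would construct $\psi$. Precomposition with $\rho$ carries a local $\Man_k$-structure $\cO_A$ to the $\cP_k$-structure $\cO_A\circ\rho=\cO_A^\alg$ and, since limits of structures are computed levelwise in the target, yields a limit-preserving functor $\rho^*\colon\Str^\loc_{\Man_k}(A)\ra\Str_{\cP_k}(A)$. Restricting over $\cO_A$ and passing to spectrum objects produces
\[ \psi\colon\bSp(\Str^\loc_{\Man_k}(A)_{/\cO_A})\ra\bSp(\Str_{\cP_k}(A)_{/\cO_A^\alg}), \]
which under the identifications above is the desired map.

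The main obstacle is to prove $\psi$ is an equivalence. The key point is that the stabilization of $\mathcal{T}$-structures over a fixed structure is insensitive to the transcendental part of the pregeometry: following the structured-spaces formalism of \cite{LV}, the tangent $\infty$-category $\bSp(\Str^\loc_{\Man_k}(A)_{/\cO_A})$ is identified with the stable $\infty$-category of modules over the underlying commutative dg-algebra of $\cO_A$, obtained by evaluation on the affine line $k^1$, and likewise on the $\cP_k$-side. Concretely, the object $\Omega^\infty$ of a spectrum object is a split square-zero extension $\cO_A\oplus M$, and the $\Man_k$-structure on such an extension is uniquely forced by first-order Taylor expansion, $f(a+m)=f(a)+f'(a)\cdot m$; restricted to polynomial functions this is exactly the algebraic square-zero structure, so $\rho^*$ carries the split extension by $M$ to the split extension by $M$. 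Hence both stabilizations are modules over the common underlying algebra and $\psi$ realizes the identity between them. That this comparison is an equivalence, rather than merely a compatible functor, is the infinitesimal manifestation of $\rho$ being unramified (Proposition~11.10 of \cite{LIX}), already invoked in the proof of Proposition~\ref{preservespullbacks}.

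Finally I would reconcile the local-versus-general decoration between the two sides. Because a split square-zero extension $\cO_A\oplus M$ has the same underlying discrete structure $\pi_0$ as $\cO_A$, it is local whenever $\cO_A$ is; therefore the forgetful map from spectrum objects in local $\Man_k$-structures to spectrum objects in all $\Man_k$-structures over $\cO_A$ is already an equivalence. Combining this with the previous paragraph yields the equivalence $\psi\colon\bMod(\cO_A)\xras\bMod(\cO_A^\alg)$.
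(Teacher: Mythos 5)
Your proposal is correct and takes essentially the same route as the paper: both proofs come down to the fact that a split square-zero extension $\cO_A\oplus E$ carries a unique local $k$-manifold structure determined by its algebraic part via first-order Taylor expansion, which the paper makes precise by explicitly constructing the right adjoint $\gamma$ to $\psi$ out of the tangent sheaves $T_M$ (through the section spaces $\Gamma(\cO_A^\alg|U,(f^\alg)^*(T_M)\otimes_{\cO_A^\alg}E)$) and reducing the unit check to $M=k$ using product- and admissible-pullback-preservation. The one caveat is that your citation of unramifiedness of $\rho$ is not what does the work here; the existence and uniqueness of the smooth structure on the split extension is precisely the step the paper verifies in detail, and your sketch leaves it as an assertion.
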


\begin{proof}
We will show that the map
\[  \psi:\bSp(\Str_{\Man_k}^{\tu{loc}}(A)_{/\cO_A})\ra \bMod(\cO_A^\alg)  \]
sending a spectrum object $u:\cO_A'\ra\cO_A$ to its algebraic fiber $\tu{fib}(u)^\alg$ is fully faithful and essentially surjective.  For the first part, we construct a right adjoint $\gamma$ to $\psi$ and show that the unit $1\ra\gamma\circ\psi$ is an equivalence.  The map $\gamma$ sends a $\cO_A^\alg$-module $E$ to a spectrum object over $\cO_A$ which we denote by $\cO_A\oplus E$.  We define it as follows.  

Let $M$ be a manifold and denote by $T_M$ the tangent sheaf to $M$.  This is an object of the $\infty$-category $\bMod(\cO_M^\disc)\subset\bMod(\cO_M^\alg)$.  Let $O(A)$ be the category of open sets in $|A|$.  Consider the functor
\[  F:\Man_k\times O(A)^\circ\ra\sSet  \]
sending a pair $(M,U)$ to the simplicial set $\Map_{\bTop}(U,|M|)$.  Denote by 
\[   \int_{\Man_k^\circ\times O(A)} F\ra\Man_k^\circ\times O(A)  \] 
the fibered space classified by $F$.  We can construct the category
\[     \<F\>:=\tu{Fun}(*,\f{C}(\int_{\Man_k^\circ\times O(A)} F))\times_{\tu{Fun}(*,\Man_k^\circ\times O(A))} (\Man_k^\circ\times O(A))   \]
of triples $(M,U,f)$ where $f\in\Map_{\bTop}(U,|M|)$.  Here $\f{C}(S)$ is the simplicial category associated to a simplicial set $S$ given by the left adjoint to the simplicial nerve functor and the pullback is taken in the category of categories.

Consider the functor 
\[  G_E:\<F\>^\circ\ra\sSet  \]
sending a triple $(M,U,f)$ to the space of sections $\Gamma(\cO_A^\alg|U,(f^\alg)^*(T_M)\otimes_{\cO_A^\alg}E)$ for a $\cO_A^\alg$-module $E$.  Denote by 
\[  \int_{\<F\>} G_E\ra \<F\>  \] 
the fibered space classified by $G_E$.

Since fibered spaces are stable under composition, we have a fibered space 
\[  \int_{\<F\>} G_E\ra\Man_k^\circ\times O(A).   \]  
Let
\[    \cO_A\oplus E:\Man_k\times O(A)^\circ\ra\sSet   \]
denote its corresponding straightening functor and take its sheafification.  It is not difficult to check that it is a local $k$-manifold structure on $A$.  For the preservation of pullback diagrams along admissible maps we note that the tangent sheaf construction is functorial in $M$, preserves pullbacks and that $\Gamma(A,\lim_i Z_i)\simeq\lim_i\Gamma(A,Z_i)$ for any $\{Z_i\}$.

We now show that there exists an equivalence
\[ \cO_A\oplus\tu{fib}(u)^\alg(M,U)\simeq \cO_A'(M,U)  \]
for $M\in\Man_k$ and $U\subset |A|$.  It suffices to prove the equivalence on $M$ a (closed) submanifold of $k^n$.  Since this is a map of local $k$-manifold structures, we have a pullback diagram
\begin{diagram}
\cO_A\oplus\tu{fib}(u)^\alg(M,U) &\rTo &\cO_A'(M,U)\\
\dTo  &        &\dTo\\
\cO_A\oplus\tu{fib}(u)^\alg(k^n,U)  &\rTo       &\cO_A'(k^n,U)
\end{diagram}
of spaces.  Therefore it suffices to prove the equivalence on $k^n$ itself, or since $k$-manifold structures are product preserving, on $k$.  It is also sufficient to study the $0$-space of the algebraic spectrum due to the equivalence
\[    \cO_A(k)\simeq\Omega^\infty(\cO_A^\alg)    \]
of stacks on $|A|$.  When $M=k$ and $f:U\ra k$, then $(f^\alg)^*(T_k)=\cO_U^\alg$ and we have an equivalence
\[  \cO_A\oplus\tu{fib}(u)^\alg(k,U)\simeq\Omega^\infty\cO_A'^{\alg}(U)  \] 
of spaces as required. 

To prove essential surjectivity, it suffices to show that the counit $\rho\circ\gamma\ra 1$ is an equivalence or simply observe that $\rho$ commutes with colimits and use the generating properties of $\cO_A^\alg$ in $\bMod(\cO_A^\alg)$.
\end{proof}

The construction of the $\infty$-category of modules over a local $k$-manifold structure is functorial in $A$ which we describe as follows.  The $\infty$-category of $\infty$-categories will be denoted $\bCat_\infty$ (see the introduction to Section~\ref{cg}).  We define a presheaf
\[   \cS:(\bdAff_k)^\circ\ra\bCatinf \]
of $\infty$-categories sending an affine derived $k$-manifold $A=(|A|,\cO_A)$ to $\Str^\loc_{\Man_k}(A)_{/\cO_A}$.  It sends a morphism $(f,f^\sharp):A=(|A|,\cO_A)\ra B=(|B|,\cO_B)$ to the functor
\[    F:\Str_{\Man_k}^\loc(B)_{/\cO_B}\ra\Str_{\Man_k}^\loc(A)_{/\cO_A}   \]
sending $\alpha:\cO'_B\ra\cO_B$ to the composition $f^\sharp\circ f^*(\alpha)$.  

Let $\bCatinf^{\lp}$ denote the subcategory of $\bCatinf$ consisting of locally presentable $\infty$-categories and colimit preserving functors.  Let $\bCatinf^{\lp,\perp}$ denote the full subcategory of $\bCatinf^{\lp}$ spanned by objects which are moreover stable.  Then by Corollary~1.4.4.5 of \cite{L2}, the functor 
\[    \varsigma:\bCatinf^{\lp}\ra\bCatinf^{\lp,\perp}   \]
sending $C$ to $\bSp(C)$ is left adjoint to the forgetful functor.  

The functor $\cS$ factors through the $\infty$-category $\bCatinf^{\lp}$ of locally presentable $\infty$-categories and so we can form the composition functor
\[  \cM:=i\circ\varsigma\circ\cS:(\bdAff_k)^\circ\ra\bCatinf  \]
where $i:\bCatinf^{\lp,\perp}\ra\bCatinf$ is the inclusion.  It sends the affine derived $k$-manifold $A$ to the $\infty$-category $\bMod(\cO_A)$ of $\cO_A$-modules and $(f,f^\sharp)$ to a functor we denote by $\partial F$.  The latter satisfies the property that there exists a commutative diagram
\begin{diagram}
\bMod(\cO_B) &\rTo^{\partial F} &\bMod(\cO_A) \\
\dTo_{\Omega^\infty_*}     &   &\dTo_{\Omega^\infty_*}\\
\Str_{\Man_k}^\loc(B)_{/\cO_B}  &\rTo^{F}  &\Str_{\Man_k}^\loc(A)_{/\cO_A}
\end{diagram}
of $\infty$-categories.

Next we prove that this presheaf is actually a sheaf of $\infty$-categories.  

\begin{prop}\label{modisasheaf}
The functor $\cM$ is a sheaf of $\infty$-categories with respect to the \'etale topology.
\end{prop}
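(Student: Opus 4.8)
The plan is to verify that $\cM$ satisfies \v{C}ech descent: for every \'etale covering family $\{A_i\to A\}$ in $\bdAff_k$ with \v{C}ech nerve $A_\bullet$, the canonical map $\cM(A)\ra\lim_{[n]\in\Delta}\cM(A_n)$ is an equivalence. The first move is to transport the problem to the algebraic model. By Proposition~\ref{modequivalence} there is an equivalence $\bMod(\cO_A)\simeq\bMod(\cO_A^\alg)$, natural in $A$, and by Proposition~\ref{modisspectra} a further natural equivalence $\bMod(\cO_A^\alg)\simeq\bSp(\bSh_{\bcdga_k}(A)_{/\cO_A^\alg})$. Hence it suffices to prove descent for the functor sending $A$ to $\bSp(\bSh_{\bcdga_k}(A)_{/\cO_A^\alg})$, and I would organize the verification along the three layers of the construction $\cM=i\circ\varsigma\circ\cS$.

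First, the base layer: the assignment $A\mapsto\bSh_{\bcdga_k}(A)$ satisfies \'etale descent. Here I use Definition~\ref{coveringmaps}, that the underlying map of an \'etale morphism is a local homeomorphism and $f^*\cO_B\xras\cO_A$ is an equivalence. Since a local homeomorphism is locally an open immersion, any \'etale covering family may be refined to one in which each $A_i\to A$ is the restriction of $\cO_A$ along an open immersion $U_i\hookrightarrow|A|$; descent along a refinement together with descent along the refined cover yields descent along the original family, so I may assume the cover is by opens. Under this reduction the \v{C}ech nerve $A_\bullet$ in $\bdAff_k$ is carried, using the equivalences $f^*\cO_A\xras\cO_{A_i}$, to the topological \v{C}ech nerve of $\{U_i\}$ equipped with the restricted structure sheaves; that $\bSh_{\bcdga_k}(-)$ sends this to a limit is precisely descent for sheaves on $|A|$, via Lemma~\ref{strtosh} and the equivalence $\bSh_{L(\sM)}(\bSh(A))\simeq\bSh_{L(\sM)}(A)$ of \cite{LV}.

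Second, the slice layer: passage to the overcategory over $\cO_A^\alg$ preserves this limit, because the $\cO_{A_n}^\alg$ form a compatible system of restrictions of $\cO_A^\alg$ along the cover and slicing a diagram of $\infty$-categories over such a system commutes with the limit. Third, the stabilization layer: the spectrum functor $\varsigma=\bSp(-)$ preserves limits, since $\bSp(C)$ is computed as the limit of the loop tower $\cdots\xra{\Omega}C_*\xra{\Omega}C_*$ and limits commute with limits. Combining the three facts, the functor $A\mapsto\bSp(\bSh_{\bcdga_k}(A)_{/\cO_A^\alg})$ sends the \v{C}ech nerve to a limit, which is the required descent statement for $\cM$.

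The main obstacle is the reduction in the first layer: one must check carefully that \'etale covering families in $\bdAff_k$ refine to open covers of the underlying space and that the \v{C}ech nerve computed in $\bdAff_k$ agrees, after this refinement, with the topological \v{C}ech nerve carrying the restricted structure sheaves. This rests on the conservativity of $\rho^*$ and on the equivalence $f^*\cO_A\xras\cO_{A_i}$ built into the definition of an \'etale map, which together guarantee that the fibre products defining the \v{C}ech nerve are detected on underlying spaces with restricted structure. Once this identification is secured, the slice and stabilization layers are formal, as both preserve the relevant limits.
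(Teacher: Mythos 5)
Your proposal is correct in outline but proves descent by a genuinely different mechanism than the paper. The paper does not verify the \v{C}ech limit directly: after passing to the algebraic model via Proposition~\ref{modequivalence} (as you also do), it invokes a Barr--Beck--Lurie-style descent criterion (Proposition~4.8 of \cite{W1}) and checks its hypotheses one by one --- local presentability of $\bMod(\cO_A^\alg)$, preservation of limits by $u^*$ for \'etale $u$, conservativity of the right adjoint $u_*$, and the base-change equivalence $v^*u_*\simeq t_*s^*$, the last established by a projection-formula computation in $\bSh_{\bcdga_k}(A)$. That route is purely categorical: it never uses that \'etale maps are local homeomorphisms on underlying spaces, and so it transports verbatim to settings (e.g.\ the algebraic one) where they are not. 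Your route instead exploits the geometric content of Definition~\ref{coveringmaps}: you refine an \'etale cover to an open cover of $|A|$, identify the \v{C}ech nerve in $\bdAff_k$ with the topological one carrying restricted structure sheaves, reduce to the classical statement that $\bSh_{\bcdga_k}(-)$ glues over open covers, and then commute slicing and stabilization past the limit. This is more elementary and more transparent in the present smooth setting, at the cost of portability and of some point-set bookkeeping that the paper's argument avoids entirely.

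Three steps in your argument deserve to be made explicit. First, the refinement reduction: descent for the topology generated by \'etale maps follows from descent for open covers because every \'etale covering family admits an open-immersion refinement generating the same covering sieve, but stating it as ``descent along a refinement together with descent along the refined cover'' elides the need to also handle the induced covers of the multiple overlaps; it is cleaner to argue at the level of covering sieves. Second, the interchange $\bSp(\lim_\Delta C_\bullet)\simeq\lim_\Delta\bSp(C_\bullet)$ requires the transition functors of the cosimplicial diagram to commute with $\Omega$, i.e.\ to preserve finite limits --- true here because restriction along an open immersion preserves all limits, but this should be said. Third, the identification of the fibre products $A_i\times_A A_j$ in $\bdAff_k$ with topological fibre products equipped with restricted structure sheaves is exactly where the equivalence $f^*\cO_A\xras\cO_{A_i}$ is consumed; you flag this correctly, and it is the step that would fail for a more general notion of \'etale map, which is precisely why the paper's adjunction-theoretic argument is the more robust one.
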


\begin{proof}
It suffices to check the conditions of Proposition 4.8 of \cite{W1}.  Let $A=(|A|,\cO_A)$ be an affine derived $k$-manifold.  From Proposition~\ref{modequivalence} there exists an equivalence $\bMod(\cO_A)\xras\bMod(\cO_A^\alg)$ of $\infty$-categories so we can work with the underlying algebraic model.
Firstly, the $\infty$-category $\bMod(\cO_{A}^\alg)$ is locally presentable since $\bdg^{\leq 0}_k$ is locally presentable (and thus so is the $\infty$-category of $\bdg^{\leq 0}_k$-valued sheaves on $|A|$).  Therefore, the $\infty$-category $\bMod(\cO_A^\alg)$ admits all (small) limits.  

Let $u:B=(|B|,\cO_B)\ra A$ be an \'etale map.  The map $\bMod(\cO_{A}^\alg)\ra\bMod(\cO_{B}^\alg)$ induced from $u^*$ then preserves limits.  Furthermore, the forgetful functor 
\[  u_*:\bMod(\cO_{B}^\alg)\ra\bMod(\cO_{A}^\alg)  \] 
is conservative and is right adjoint to $u^*$.  Finally consider the pullback square
\begin{diagram}
D &\rTo^{t} &C\\
\dTo^{s}  &        &\dTo_{v}\\\
B  &\rTo^{u}       &A
\end{diagram}
in $\bdAff_k$.  We must show that $v^*u_*\ra t_*s^*$ is an equivalence in the $\infty$-category 
\[  \bFun(\bMod(\cO_B^\alg),\bMod(\cO_C^\alg))   \]
of functors.  

Let $M\in\bMod(\cO_B^\alg)$.  Then we have a natural map
\[  v^*u_*(M)=v^{*}(u_*M\otimes_{u_*\cO_B^\alg}\cO^\alg_{A})\simeq v^{*}(u_*M\otimes_{u_*s_*\cO^\alg_D}v_*\cO^\alg_{C})\simeq t_{*}(s^*M\otimes_{s^*\cO^\alg_{B}}\cO^\alg_{D})=t_{*}s^{*}(M) \]
where the first equivalence follows from the equivalence 
\[  u_*\cO^\alg_{B}\coprod_{u_*s_*\cO^\alg_{D}}v_*\cO^\alg_{C}\simeq u_*\cO^\alg_{B}\otimes_{u_*s_*\cO^\alg_{D}}v_*\cO^\alg_{C} \] 
in the $\infty$-category $\bSh_{\bcdga_k}(A)$.
\end{proof}

\begin{prop}\label{subcanonical}
The \'etale topology on the $\infty$-category of affine derived $k$-manifolds is subcanonical.
\end{prop}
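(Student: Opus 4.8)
The plan is to prove subcanonicity by showing that for every affine derived $k$-manifold $A$ the representable presheaf $\Map_{\bdAff_k}(-,A)$ on the $\infty$-site $(\bdAff_k,et)$ lies in $\dSmSt_k=\bSh(\bdAff_k,et)$, i.e. satisfies \'etale descent. First I would reduce this to a single generating affine object, and then extract the required descent from the fact, established in Proposition~\ref{modisasheaf}, that the presheaf $\cM$ of module $\infty$-categories is a sheaf.

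For the reduction, recall that by definition every affine derived $k$-manifold is a finite limit in $\bTop^\loc(\Man_k)$ of affine spaces $\Spec^{\Man_k}(k^{n})$. Since mapping spaces preserve limits in the target variable, one has $\Map_{\bdAff_k}(-,A)\simeq\lim_i\Map_{\bdAff_k}(-,\Spec^{\Man_k}(k^{n_i}))$ as presheaves, and the full subcategory $\dSmSt_k\subset\textbf{Pr}(\bdAff_k)$ of sheaves is closed under limits. Using the equivalence $\Map_{\dMan_k}(B,\Spec^{\Man_k}(M))\simeq\cO_B(M)(|B|)$ recorded earlier together with the product-preservation of the $k$-manifold structure $\cO_B$ (condition (1) of Definition~\ref{pkstructure}), one gets $\Map_{\bdAff_k}(-,\Spec^{\Man_k}(k^n))\simeq\Map_{\bdAff_k}(-,\Spec^{\Man_k}(k))^{n}$. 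As products of sheaves are sheaves, it remains only to prove that $\Map_{\bdAff_k}(-,\Spec^{\Man_k}(k))$ satisfies descent.

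The core step is to identify this presheaf with the global endomorphisms of the unit in the module categories. Combining the mapping space formula with the identification $\cO_B(k)\simeq\Omega^\infty\cO_B^\alg$ of stacks on $|B|$ (from the proof of Proposition~\ref{modequivalence}) and Proposition~\ref{modequivalence} itself, I would write
\[ \Map_{\bdAff_k}(B,\Spec^{\Man_k}(k))\simeq\Gamma\bigl(\Omega^\infty\cO_B^\alg\bigr)\simeq\Map_{\bMod(\cO_B^\alg)}(\cO_B^\alg,\cO_B^\alg), \]
the last term being the space of self-maps of the unit object of $\bMod(\cO_B^\alg)$. Given an \'etale cover of $B$ with \v{C}ech nerve $B_\bullet$, Proposition~\ref{modisasheaf} yields $\bMod(\cO_B^\alg)\simeq\lim_{n\in\Delta}\bMod(\cO_{B_n}^\alg)$. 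Because the transition functors $\partial F$ are symmetric monoidal they carry the unit to the unit, so the unit of $\bMod(\cO_B^\alg)$ corresponds to the compatible (Cartesian) family $(\cO_{B_n}^\alg)_n$, and since mapping spaces in a limit of $\infty$-categories in $\bCat_\infty$ are computed as the limit of mapping spaces between the component objects, I obtain
\[ \Map_{\bMod(\cO_B^\alg)}(\cO_B^\alg,\cO_B^\alg)\simeq\lim_{n\in\Delta}\Map_{\bMod(\cO_{B_n}^\alg)}(\cO_{B_n}^\alg,\cO_{B_n}^\alg). \]
Unwinding the identifications, this is exactly the descent equivalence $\Map_{\bdAff_k}(B,\Spec^{\Man_k}(k))\simeq\lim_{n\in\Delta}\Map_{\bdAff_k}(B_n,\Spec^{\Man_k}(k))$, proving the representable is a sheaf.

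The hard part will be the bookkeeping in the core step rather than the reduction: one must verify that under the descent equivalence of Proposition~\ref{modisasheaf} the unit objects genuinely assemble into a Cartesian section, i.e. that each transition functor preserves the monoidal unit up to coherent equivalence, and that the passage from the limit of module $\infty$-categories to a limit of mapping spaces is legitimate in $\bCat_\infty$. Both are standard, but the symmetric monoidal refinement of Proposition~\ref{modisasheaf}, which is only implicit in its statement, must be made explicit for the unit-preservation to be rigorous.
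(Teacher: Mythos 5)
Your proof is correct, and while it runs on the same engine as the paper's --- Proposition~\ref{modisasheaf} --- the way you extract subcanonicity from it is genuinely different. The paper takes $E=\cO_A$ (the unit module) in the descent equivalence $\bMod(\cO_A)\simeq\lim_\Delta\bMod(\cO_{B_*})$ to conclude that the structure sheaf itself satisfies descent, hence that the \v{C}ech nerve of a cover is a colimit diagram in $\bdAff_k$; all representables are then sheaves at once, since $\Map(-,C)$ carries colimits in the source to limits. You instead reduce to the single representable $\Spec^{\Man_k}(k)$ --- via the finite-limit presentation of affines, closure of sheaves under limits, and product-preservation of $\cO_B$ --- and then identify $\Map(B,\Spec^{\Man_k}(k))$ with the endomorphism space of the unit of $\bMod(\cO_B^\alg)$, which descends because mapping spaces commute with limits of $\infty$-categories. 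What your route buys is that it never has to convert ``the structure sheaf satisfies descent'' into ``$A$ is the colimit of its \v{C}ech nerve in $\bdAff_k$,'' a step the paper leaves implicit and which, for structured spaces (where the underlying topological space also carries information), really does require the reduction to $k^n$ and the formula $\Map(B,\Spec^{\Man_k}M)\simeq\cO_B(M)(|B|)$ that you make explicit. The cost is the extra bookkeeping you correctly flag: one must upgrade Proposition~\ref{modisasheaf} to a symmetric monoidal (or at least unit-preserving) statement so that the units assemble into a Cartesian section. Both items are routine, and the two arguments are in the end repackagings of the same descent of the unit object.
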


\begin{proof}
From Proposition~\ref{modisasheaf} the presheaf $\cM$ is a sheaf of $\infty$-categories on $(\bdAff_k,et)$ and so
\[    \bMod(\cO_{A})\ra\lim_\Delta\bMod(\cO_{B*})  \]
is an equivalence for any covering $B_*\ra A$ of $A$.  Therefore, for any $\cO_{A}$-module $E$, the unit map $E\ra\lim_{\Delta}(E\otimes_{A}B_*)$ is an equivalence.  If we take $E=A$ then $A\ra\lim_\Delta B_*$ is an equivalence and thus for any affine derived $k$-manifold  $C$, the composition
\[  \Map(C,A)\ra\Map(C,\lim_\Delta B_*)\ra\lim_\Delta\Map(C,B_*)  \]
is an equivalence.  Therefore the prestack $h_C$ is a stack with respect to the \'etale topology.
\end{proof}

Since the \'etale topology is subcanonical, the Yoneda embedding factors through the subcategory of derived $k$-smooth stacks and we denote by 
\[   \Spec:\bdAff_k\ra\dSmSt_k  \]
the resulting fully faithful functor.  Derived $k$-smooth stacks in the essential image of this functor will be called \textit{affine}.    

The Yoneda functor also embeds the $\infty$-category of derived $k$-manifolds into the $\infty$-category of derived $k$-smooth stacks.

\begin{prop}\label{danspdanst}
The Yoneda embedding
\[      h:\tu{\dMan}_k\ra\dSmSt_k   \]
with image restricted to affine derived $k$-manifolds is fully faithful.
\end{prop}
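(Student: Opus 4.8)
Write $h_A=\Map_{\dMan_k}(-,A)$ for the restricted Yoneda presheaf on $\bdAff_k$, so that for an affine derived $k$-manifold $B$ we have $h_A(B)=\Map_{\dMan_k}(B,A)$. For all $A,A'\in\dMan_k$ we must produce an equivalence
\[ \Map_{\dMan_k}(A,A')\xras\Map_{\dSmSt_k}(h_A,h_{A'}). \]
Once $h_{A'}$ is known to be a sheaf, the case of affine source is the Yoneda lemma, $\Map_{\dSmSt_k}(h_B,h_{A'})\simeq h_{A'}(B)=\Map_{\dMan_k}(B,A')$, of which the full faithfulness of $\Spec$ established above (resting on the subcanonicity of the \'etale topology, Proposition~\ref{subcanonical}) is the subcase $A'$ affine. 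The plan is to bootstrap from the affine case by presenting an arbitrary $A$ as a colimit of affines and matching the descent pictures on the two sides.

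First I would check that $h_A$ is a sheaf, so that $h$ genuinely lands in $\dSmSt_k$ and the Yoneda lemma applies without sheafification. Since $A$ is a \emph{local} $k$-manifold structured space, a morphism $B\ra A$ out of any source is local data on $|B|$: for an \'etale cover $\{B_i\ra B\}$ a morphism $B\ra A$ is the same as a family of morphisms $B_i\ra A$ agreeing on the overlaps $B_i\times_B B_j$. Hence $h_A$ satisfies \v{C}ech descent, $h_A(B)\xras\lim_\Delta h_A(B_\bullet)$ for the nerve $B_\bullet$ of any \'etale cover, which is exactly the sheaf condition; for $A$ affine this recovers the representable $\Spec(A)$.

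Next I would fix an affine cover of $A$. Because $A$ is, locally on $|A|$, a principal derived $k$-manifold, i.e. a finite limit of $k$-manifolds, an atlas of the constituent manifolds produces an open cover of $A$ by affine derived $k$-manifolds; thus affine derived $k$-manifolds form a basis for the \'etale topology on $\dMan_k$ (in the complex-analytic case one uses the local models dictated by the pregeometry). Choose a hypercover $V_\bullet\ra A$ with each $V_n$ a coproduct of affine derived $k$-manifolds. On one hand, the locality of morphisms of structured spaces used above gives
\[ \Map_{\dMan_k}(A,A')\xras\lim_\Delta\Map_{\dMan_k}(V_\bullet,A'). \]
On the other hand, in the $\infty$-topos $\dSmSt_k$ the augmentation $h_{V_\bullet}\ra h_A$ is a colimit diagram, since the $V_i$ cover $A$ makes $h_{V_0}\ra h_A$ an effective epimorphism and $h_A$ the realization of the associated \v{C}ech/hypercover; hence
\[ \Map_{\dSmSt_k}(h_A,h_{A'})\xras\lim_\Delta\Map_{\dSmSt_k}(h_{V_\bullet},h_{A'})\xras\lim_\Delta h_{A'}(V_\bullet)\xras\lim_\Delta\Map_{\dMan_k}(V_\bullet,A'), \]
the middle equivalence being the Yoneda lemma applied termwise ($V_n$ affine, $h_{A'}$ a sheaf by the previous step) and the last the definition of $h_{A'}$. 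Both mapping spaces are thereby identified with $\lim_\Delta\Map_{\dMan_k}(V_\bullet,A')$, giving the desired equivalence and hence full faithfulness.

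The main obstacle is matching the two notions of gluing: one must know that morphisms of derived $k$-manifolds \emph{themselves} satisfy hyperdescent along covers of the source (so the left-hand tower computes $\Map_{\dMan_k}(A,A')$), and that $h_A$ realizes the same descent inside the $\infty$-topos $\dSmSt_k$. Both reduce to the fact that $\dMan_k$ sits inside $\bTop^\loc(\Man_k)$, where objects and morphisms are determined by their restrictions to an open cover; the remaining care is bookkeeping, namely refining the \v{C}ech nerve of an affine cover to a hypercover whose every simplicial level is again affine (so the Yoneda lemma applies termwise), and invoking the requisite descent in $\dSmSt_k$ for such hypercovers.
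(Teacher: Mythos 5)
Your proof is correct and follows essentially the same route as the paper: present $A$ as the colimit of a (hyper)cover by affine derived $k$-manifolds, apply the Yoneda lemma termwise, and identify the resulting limits on both sides. You simply make explicit the two descent facts the paper's one-line argument leaves implicit (that $h_{A'}$ is a sheaf and that morphisms out of $A$ in $\dMan_k$ glue along covers of the source).
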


\begin{proof}
Every derived $k$-manifold $A=(|A|,\cO_A)$ is locally given by an affine derived $k$-manifold $U=(U,\cO_A|U)$ by choosing a refinement of the cover.  Therefore there exists an equivalence
$  h(A)\simeq\colim_{\Delta}(U_n) $
where the right hand side is the $\check{\tu{C}}$ech nerve of the cover and the result follows from the Yoneda lemma.  
\end{proof}

\section{Derived smooth Artin stacks}\label{dsas}

We will now review the types of geometric objects we would like to consider in this article.  We study a subcategory of derived stacks in the smooth setting called derived Artin stacks.  The benefit of restricting to this subcategory is that its objects are general enough to include all the examples of interest to us and restrictive enough to guarantee the existence of a workable infinitesimal theory.  This subcategory also retains good formal properties.  We use the language of geometries laid out in \cite{TVII} (see also \cite{LV}).  After defining these objects, we describe how to move between them in a functorial manner.    

\begin{dfn}\label{geometry}
A \textit{geometry} is a pair $((C,\tau),\sP)$ consisting of a $\infty$-site $(C,\tau)$ together with a collection of maps $\sP$ in $C$ satisfying the following conditions~:
\begin{enumerate}
\item The $\infty$-category $C$ admits finite limits and the topology $\tau$ on $C$ is subcanonical with covering families consisting of morphisms in $\sP$.
\item The class $\sP$ of maps is stable by composition, pullbacks and contains all equivalences.
\item Let $u:x\ra y$ be a map in $C$ and $\{y_i\ra y\}\in\tau(x)$ such that each $y_i\ra y$ and each $y_i\ra x$ is in $\sP$.  Then $u$ is in $\sP$. 
\end{enumerate}
\end{dfn}

Note that the first and final conditions highlight the local nature of the maps in $\sP$ with respect to the topology on the $\infty$-site.  

Let $(C,\tau)$ and $(D,\eta)$ be two $\infty$-sites.  We will say that a functor $f:C\ra D$ is \textit{topologically continuous} if the induced map
\[     f^*:\bPr(D)\ra\bPr(C)    \]
of $\infty$-categories preserves the full subcategory of stacks.   

\begin{lem}\label{fiscontinuous}
Let $(C,\tau)$ and $(D,\eta)$ be two $\infty$-sites such that $C$ and $D$ admit finite limits.  Let $f:C\ra D$ be a left exact functor which preserves covering families.  Then $f$ is topologically continuous.
\end{lem}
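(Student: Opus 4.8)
The plan is to reduce the assertion to the $\check{\tu{C}}$ech descent condition for the generating covering families and then to transport that condition across $f$ using left exactness. Recall first that $f^*:\bPr(D)\ra\bPr(C)$ is precomposition, so that for a prestack $G:D^\circ\ra\textbf{S}$ one has $f^*(G)=G\circ f^\circ$. Thus what must be proved is that whenever $G$ lies in the full subcategory $\bSh(D,\eta)\subseteq\bPr(D)$, the prestack $G\circ f^\circ$ lies in $\bSh(C,\tau)\subseteq\bPr(C)$.

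First I would record the convenient form of the sheaf condition. Since $\tau$ is generated by its covering families and $C$ admits finite limits, a prestack $F$ on $C$ is a stack if and only if for every covering family $\{x_i\ra x\}\in\tau(x)$ the canonical map
\[
F(x)\ra\lim_{[n]\in\Delta}F(x_n)
\]
is an equivalence, where $x_\bullet$ denotes the $\check{\tu{C}}$ech nerve in $C$ of the map $\coprod_i x_i\ra x$, whose $n$-th term is the coproduct of the iterated fibre products $x_{i_0}\times_x\cdots\times_x x_{i_n}$. (I expand below on why this $\check{\tu{C}}$ech formulation agrees with the sieve-theoretic definition of a stack.)

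Next I would take $G\in\bSh(D,\eta)$ and fix a covering family $\{x_i\ra x\}$ in $\tau$. Because $f$ preserves covering families, $\{f(x_i)\ra f(x)\}$ is a covering family in $\eta(f(x))$. Because $f$ is left exact it preserves the iterated fibre products occurring in the $\check{\tu{C}}$ech nerve, so $f(x_\bullet)$ is canonically equivalent to the $\check{\tu{C}}$ech nerve of $\coprod_i f(x_i)\ra f(x)$ in $D$. Applying the descent condition for $G$ relative to this covering family then yields
\[
(f^*G)(x)=G(f(x))\xras\lim_{[n]\in\Delta}G(f(x_n))=\lim_{[n]\in\Delta}(f^*G)(x_n),
\]
the middle arrow being the sheaf property of $G$ and the outer identifications being the definition of $f^*$ together with the equivalence $f(x_\bullet)\simeq f(x)_\bullet$. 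Hence $f^*G$ satisfies $\check{\tu{C}}$ech descent for every covering family of $\tau$ and is therefore a stack, which is exactly the statement that $f$ is topologically continuous.

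The hard part will be the reduction invoked in the second paragraph, namely the equivalence between the defining (sieve) formulation of a stack and $\check{\tu{C}}$ech descent against the generating covering families. The point is that in $\bPr(C)$ the covering sieve attached to $\{x_i\ra x\}$ is the image of the induced map $\coprod_i h_{x_i}\ra h_x$, and this image is computed as the geometric realization $\colim_{[n]\in\Delta}h_{x_n}$ of the $\check{\tu{C}}$ech nerve formed in $\bPr(C)$; here one uses that the Yoneda embedding preserves the finite limits defining the nerve. Consequently, testing a prestack against the covering sieve is the same as testing $\check{\tu{C}}$ech descent, and since $\tau$ is generated by the pretopology of covering families it suffices to verify the condition on the generating families. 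With this identification in place the functoriality argument above is purely formal, the only genuine inputs being the preservation of covering families and the left exactness of $f$.
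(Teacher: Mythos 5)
Your proof is correct and follows essentially the same route as the paper: both arguments reduce the stack condition to \v{C}ech descent against covering families and then transport that condition along $f$, using preservation of covers and left exactness to identify $f$ applied to the \v{C}ech nerve with the \v{C}ech nerve of the image cover. The only cosmetic difference is that you evaluate $f^*G=G\circ f^\circ$ directly, whereas the paper phrases the same computation via the Yoneda embedding and the adjunction $f_!\dashv f^*$.
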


\begin{proof}
Let $G$ be a stack in $\bSh(D,\eta)$.  We must show that the prestack $f^*G$ is a stack on $C$, ie. that 
\[    f^*G(x)\ra\lim_\Delta f^*G(u_*)  \]
is an equivalence for all coverings $\{u_i\ra x\}$ in $C$.  This is equivalent to the condition that the map
\[    \Map(x,f^*G)\ra\lim_\Delta\Map(u_*,f^*G)\xras\Map(\colim_\Delta u_*,f^*G)  \]
is an equivalence or that, by adjunction, the map
\[   \Map(f_!x,G)\ra\lim_\Delta\Map(f_!u_*,G)  \]
is an equivalence.  However, since by assumption, $\{f_!u_i\ra f_!x\}$ is a cover of $f_!x$ in $D$ and $G$ satisfies descent, we have that this map is indeed an equivalence.  
\end{proof}

The induced functor $f^*:\bSh(D,\eta)\ra\bSh(C,\tau)$ admits a left exact left adjoint
\[     f_!^a:=a\circ f_!:\bSh(C,\tau)\ra\bSh(D,\eta)  \]
given by the composition of $f_!:\bSh(C,\tau)\ra\bPr(D)$ with the associated stack functor $a$.

\begin{dfn}
Let $((C,\tau),\sP)$ and $((D,\eta),\sQ)$ be geometries.  A functor $f:C\ra D$ is said to be a \textit{transformation of geometries} if
\begin{enumerate}
\item The functor $f$ preserves finite limits and is topologically continuous.
\item The functor $f$ sends maps in $\sP$ to maps in $\sQ$.
\end{enumerate} 
\end{dfn}

Let $\textbf{Geom}$ denote the subcategory of the $\infty$-category of $\infty$-sites and continuous maps consisting of geometries and transformations of geometries.  

Now recall the inductive definition of a geometric stack from \cite{TVII}.  A stack is $n$-geometric if it admits a $n$-atlas.  A map of representable stacks will be said to be in $\sP$ if it is the image under the Yoneda embedding of a map in $\sP$.  It follows from Corollary 1.3.3.5 of \cite{TVII} that the $\infty$-category of $n$-geometric stacks is stable under pullbacks and disjoint coproducts.  A stack is said to be \textit{geometric} if it is $n$-geometric for some $n$.  A map of stacks is said to be \textit{in} $\sP$ if it is in $n$-$\sP$ for some $n$.  

The subcategory of $\bSh(C,\tau)$ spanned by the geometric stacks will be denoted $\bSh(C,\tau;\sP)$.  We have a well defined presheaf of $\infty$-categories
\[    q:\textbf{Geom}^\circ\ra\bCatinf  \]
sending a geometry $((C,\tau),\sP)$ to $\bSh(C,\tau;\sP)$ and a transformation of geometries $f$ to $f^*$.

Some algebraic examples include the following.

\begin{ex}\cite{TVII}\label{spectraldm}
Let $(\bdAff^{\,\alg}_k,et)$ denote the $\infty$-site of affine stacks over $k$ for the \'etale topology.  Then this $\infty$-site together with the class $\sP$ of \'etale morphisms $et$ defines a geometry.  A \textit{derived Deligne-Mumford stack} over $k$ is a geometric stack on the geometry $((\bdAff^{\,\alg}_k,et),et)$.  We denote by 
\[  \textbf{dDM}_k:=\bSh(\bdAff^{\,\alg}_k,et;et)  \] 
the $\infty$-category of derived Deligne-Mumford stacks over $k$.
\end{ex}

\begin{ex}\cite{TVII}\label{spectralart}
The $\infty$-site $(\bdAff^{\,\alg}_k,et)$ together with the class $\sP$ of smooth morphisms $sm$ defines a geometry.  A \textit{derived Artin stack} over $k$ is a geometric stack on the geometry $((\bdAff^{\,\alg}_k,et),sm)$.  We denote by 
\[  \textbf{dAr}_k:=\bSh(\bdAff^{\,\alg}_k,et;{sm})  \] 
the $\infty$-category of derived Artin stacks over $k$.
\end{ex}

These examples also have an interpretation in complex analytic geometry.  A \textit{derived complex analytic space} is a local $k$-manifold structured space $(|A|,\cO_A)$ such that for any $x\in|A|$ there exists a neighborhood $U\subset|A|$ of $x$ such that $(U,\pi_0(\cO^\alg_A|U))$ is a complex analytic space and for each $k\geq 0$, the sheaf $\pi_k(\cO_A^\alg|U)$ is a coherent sheaf of $\pi_0(\cO_A^\alg|U)$-modules (see \cite{LIX}).  It is said to be affine if $(U,\pi_0(\cO^\alg_A|U))$ is a Stein space.  

\begin{ex}\cite{Po1}\label{analyticdm}
Let $(\bdAff^{\,\an}_\bb{C},et)$ denote the full subcategory of $\bTop^\loc(\Man_{\bb{C}})$ spanned by affine derived complex analytic spaces endowed with the \'etale topology.  This $\infty$-site of derived affine complex analytic spaces together with the class $\sP$ of \'etale morphisms $et$ defines a geometry.  A \textit{derived complex analytic Deligne-Mumford stack} is a geometric stack on the geometry $((\bdAff^{\,\an}_\bb{C},et),et)$.  We denote by 
\[  \textbf{dAnDM}_\bb{C}:=\bSh(\bdAff^{\,\an}_\bb{C},et;et)  \] 
the $\infty$-category of derived complex analytic Deligne-Mumford stacks. 
\end{ex}

\begin{ex}\cite{Po1}\label{analyticart}
The $\infty$-site $(\bdAff^\an_\bb{C},et)$ of affine derived complex analytic spaces together with the class $\sP$ of smooth morphisms $sm$ defines a geometry.  A \textit{derived complex analytic Artin stack} is a geometric stack on the geometry $((\bdAff^{\,\an}_\bb{C},et),sm)$.  We denote by 
\[  \textbf{dAnAr}_\bb{C}:=\bSh(\bdAff^{\,\an}_\bb{C},et;sm)  \] 
the $\infty$-category of derived complex analytic Artin stacks. 
\end{ex}

In the smooth setting, analogous results can be found.  The $\infty$-site $(\bdAff_k,et)$ of affine derived $k$-manifolds endowed with the \'etale topology together with the class $\sP$ of \'etale morphisms $et$ defines a geometry.  

\begin{dfn}
A \textit{derived $k$-smooth Deligne-Mumford stack} is a geometric stack on the geometry $((\bdAff_k,et),et)$.  
\end{dfn}

We denote by $\textbf{dSmDM}_k:=\bSh(\bdAff_k,et;et)$ the $\infty$-category of derived $k$-smooth Deligne-Mumford stacks.  

We would like to define an analogue of a derived Artin stack in the smooth setting.  We will use the following notion of smooth morphism of affine derived $k$-manifolds.

\begin{dfn}\label{smooth}
Let $A=(|A|,\cO_A)$ and $B=(|B|,\cO_B)$ be affine derived $k$-manifolds.  A morphism $f:A\ra B$ is said to be \textit{smooth} if, for every $x\in |A|$, there exists a neighborhood $U$ of $x$ such that the map $(U,\cO_A|U)\ra B$ factors as 
\[   (U,\cO_A|U)\xra{g} k^n\times B\ra B  \]
for some $n$ where $g$ is \'etale.   
\end{dfn}

The class $\sP$ of smooth maps between affine derived $k$-manifolds introduced in Definition~\ref{smooth} will be denoted \textit{sm}.  The $\infty$-site $(\bdAff_k,et)$ admits finite limits by Proposition~\ref{finitelimits} and the \'etale is subcanonical by Proposition~\ref{subcanonical}.  Since \'etale morphisms are smooth and smooth maps between affine derived $k$-manifolds are stable by composition, finite limits and equivalences, the pair $((\bdAff_k,et),sm)$ defines a geometry.  This enables us to define the following key geometric objects, structures on which will be explored for the remainder of this article.

\begin{dfn}
A \textit{derived $k$-smooth Artin stack} is a geometric stack on the geometry given by $((\bdAff_k,et),sm)$.
\end{dfn}

We denote by $\dSmAr_k:=\bSh(\bdAff,et;{sm})$ the $\infty$-category of derived $k$-smooth Artin stacks.  As a full subcategory of $\dSmSt_k$, it is stable under pullbacks.

Every derived $k$-smooth Artin stack has a presentation as a quotient of a derived $k$-smooth stack by a groupoid action.  This uses the characterization of geometric stacks as quotients by groupoid actions as detailed in Section 1.3.4 of \cite{TVII}.  Here we state the main result in our context.  

Recall that since the $\infty$-category of derived $k$-smooth stacks $\dSmSt_k$ admits finite limits, there exists an $\infty$-category $\textbf{Gpd}(\dSmSt_k)$ of groupoid objects in $\dSmSt_k$.  If $G$ is a groupoid object in $\dSmSt_k$, then $G$ is said to be a \textit{$n$-smooth groupoid object} if $G_0$ and $G_1$ are disjoint unions of $n$-geometric stacks and the face map $d_0:G_1\ra G_0$ is in $n\mbox{-}\sP=n\mbox{-}sm$.

One can follow the same argument used in Proposition 1.3.4.2 of \cite{TVII} to prove the following proposition with minimal modifications so we leave the details to the reader.  Let $X$ be a derived $k$-smooth stack and $n\geq 0$.  Then $X$ is $n$-geometric if and only if there exists a $(n-1)$-smooth groupoid object $G$ in the $\infty$-category of derived $k$-smooth stacks and an equivalence
\[     X\ra |G|:=\underset{n}{\colim}~G_n  \]
of derived $k$-smooth stacks.  We note that if $X$ is $n$-geometric and 
\[   f:G_0:=\coprod_iU_i\ra X  \]
is the effective epimorphism of an $n$-atlas for $X$, then the groupoid object $G$ in the $\infty$-category of derived $k$-smooth stacks is given by
\[  G_n:=G_0\times_X G_0\times_X\ldots\times_XG_0  \]
where $G_0$ on the right hand side occurs $n$ times.  It is not difficult to check that $G_0$ and $G_1$ are disjoint unions of $(n-1)$-geometric stacks and that $d_0:G_1\ra G_0$ is in $(n-1)$-$sm$.  If we realize a derived $k$-smooth Artin stack $X$ through this result, we say that $X$ is the \textit{quotient stack} of the $(n-1)$-smooth groupoid $G$.

\begin{ex}\label{gmex}
Let $G$ be an affine derived smooth group stack over $k$.  Then the classifying stack $K(G,n)$ is an example of a derived $k$-smooth $n$-Artin stack.  It is constructed as follows.  We define the derived $k$-smooth stack $\B G:=[*/G]$.  This can be described as the sheafification of the presheaf sending an affine derived $k$-manifold $A$ to the $\infty$-groupoid $\textbf{B}(G(A))$.  However, $K(G,1):=\B G$ is itself a smooth abelian group object in derived $k$-smooth stacks so by induction we define
\[      K(G,n+1):=\B K(G,n)  \]
for all $n\geq 1$.

More generally, let $C$ be an $\infty$-category and $\textbf{Gp}(C)$ the $\infty$-category of group objects in $C$.  Let $G$ be a $\textbf{Gp}(C)$-valued sheaf on $\bdAff_k$.  Then $K(G,1)$ will be the derived $k$-smooth group stack given by the sheafification of the presheaf sending $A$ to $\textbf{B}G(A)$.  The inductive definition holds as above.  

The most important example for this work is the multiplicative derived smooth group stack $\bb{G}_m$.  Let 
\[   \textbf{ShTop}:=\bFun(*,\int_{\bTop}\Sh_{\sSet})\times_{\bFun(*,\bTop)}\bTop   \]
denote the $\infty$-category of pairs $(|A|,F)$ consisting of a topological space $|A|$ together with a stack on $A$ (see Construction~\ref{constr}).  We define
\[  \bb{G}_m:(\bdAff_{\bb{R}})^\circ\ra\textbf{Gp}(\textbf{ShTop})    \]
to be the smooth group stack sending $A=(|A|,\cO_A)$ to the group object $|\cO_A^*|$ of complex valued invertible elements in $\cO_A$.  Then
\[    K(\bb{G}_m,1):(\bdAff_{\bb{R}})^\circ\ra\textbf{Gp}(\S)   \]
is the sheafification of the functor sending $A$ to $\textbf{B}|\cO_A^*|$.  Then by induction we have the object $K(\bb{G}_m,n)$ in the $\infty$-category $\bSh_{\textbf{Gp}(\S)}(\bdAff_{\bb{R}})$.
\end{ex}

We conclude this section with a review on how one moves between geometric stacks.

\begin{prop}\label{fshreikpreservesgs}
Let $((C,\tau),\sP)$ and $((D,\eta),\sQ)$ be geometries and $f:C\ra D$ a transformation of geometries.  Then the following hold.
\begin{enumerate}
\item The induced functor $f_!^a$ preserves geometric stacks.
\item The induced functor $f_!^a$ sends maps in $\sP$ to maps in $\sQ$.
\end{enumerate}
\end{prop}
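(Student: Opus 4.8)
The plan is to prove both parts by induction on the geometricity level $n$, exploiting the inductive definition of $n$-geometric stacks via atlases together with the adjoint functor $f_!^a$ constructed just before the statement. Recall that $f_!^a = a \circ f_!$ is left exact (being the composite of the left exact $f_!$ with the left exact sheafification $a$), and that its right adjoint is the topologically continuous functor $f^*:\bSh(D,\eta)\ra\bSh(C,\tau)$ from Lemma~\ref{fiscontinuous}, whose hypotheses are met because $f$ is a transformation of geometries and hence left exact and preserving covering families. The key structural inputs I would isolate first are: (i) $f_!^a$ restricted to representable stacks agrees, up to sheafification, with the Yoneda image of $f$, so it sends an object $\Spec a$ in $C$ to $\Spec f(a)$ in $D$; and (ii) $f_!^a$ preserves finite limits, in particular pullbacks, since it is left exact.

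For the base case, a $0$-geometric stack in $\bSh(C,\tau;\sP)$ is a disjoint coproduct of representables, and $f_!^a$ sends coproducts to coproducts (being a left adjoint) and representables to representables by (i); since $f$ is a transformation of geometries, condition~(2) in its definition sends maps in $\sP$ between representables to maps in $\sQ$, establishing the $n=0$ case of both assertions simultaneously. For the inductive step, suppose the claim holds for $(n-1)$-geometric stacks and for maps in $(n-1)\text{-}\sP$. Let $X$ be $n$-geometric with an $n$-atlas $\{U_i \ra X\}$, so that $X \simeq |G|$ is the quotient of the $(n-1)$-smooth groupoid $G$ with $G_0 = \coprod_i U_i$ and the $G_m$ computed by iterated fiber products over $X$, exactly as in Section 1.3.4 of \cite{TVII}. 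I would then apply $f_!^a$ to this groupoid presentation: since $f_!^a$ is a left adjoint it preserves the colimit $|G| = \colim_n G_n$, and since it preserves pullbacks it carries the groupoid object $G$ to a groupoid object $f_!^a G$ in $\bSh(D,\eta)$ with $(f_!^a G)_m \simeq f_!^a(G_m)$. By the inductive hypothesis each face structure map lies in $\sQ$ and each $f_!^a(G_m)$ is a disjoint union of $(n-1)$-geometric stacks, so $f_!^a G$ is an $(n-1)$-geometric groupoid and its realization $f_!^a X \simeq |f_!^a G|$ is $n$-geometric. The face map $d_0$ being in $(n-1)\text{-}\sQ$ then certifies that the atlas map of $f_!^a X$ lies in the appropriate class, closing the induction for part (1); part (2) follows by tracking a map $X \ra Y$ in $n\text{-}\sP$ through the same presentation and invoking the inductive hypothesis on its base-change to the atlas.

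The step I expect to be the main obstacle is verifying that $f_!^a$ genuinely commutes with the groupoid-realization colimit and the defining fiber products \emph{simultaneously}, i.e. that $f_!^a(G_m) \simeq (f_!^a G)_m$ as a groupoid object and not merely levelwise. The subtlety is that the $G_m$ are iterated fiber products $G_0 \times_X \cdots \times_X G_0$ taken \emph{over the realization $X$ itself}, which is only $(n-1)$-geometric after passing to the quotient; one must check that left exactness of $f_!^a$ interacts correctly with the Segal/groupoid conditions so that applying $f_!^a$ to the simplicial diagram yields again a groupoid object satisfying descent. I would handle this by first checking that $f_!^a$ preserves the relevant effective epimorphisms (atlas maps), which follows from $f$ preserving covering families and from the sheafification $a$ preserving effective epimorphisms, and then appealing to the fact that in an $\infty$-topos a left exact colimit-preserving functor transports groupoid objects to groupoid objects and commutes with their geometric realizations. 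The remaining verifications — stability of $\sQ$ under composition and pullback, and the local characterization in condition~(3) of a geometry — are routine once the inductive framework is set up, so I would relegate them to the reader or to a brief citation of Corollary~1.3.3.5 of \cite{TVII}.
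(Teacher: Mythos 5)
The paper itself does not give an argument here: it defers entirely to \cite{TV} and to Section~2 of \cite{PY}. Your proposal reconstructs the standard induction on geometricity that those references carry out, and the skeleton of part~(1) --- left exactness of $f_!^a$, preservation of representables, of colimits, of effective epimorphisms, and hence of groupoid objects and their realizations --- is the right one. The step you single out as the main obstacle is in fact the easy part: since $f_!^a$ is left exact and colimit-preserving it is the inverse image of a geometric morphism of $\infty$-topoi, and such functors automatically carry groupoid objects to groupoid objects and commute with their realizations, effectivity being one of the Giraud axioms. (Minor point: with the conventions of \cite{TVII} the induction starts at $(-1)$-geometric $=$ representable; a $0$-geometric stack is already a groupoid quotient of a disjoint union of representables, not such a disjoint union itself.)

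The genuine gap is in part~(2), and it propagates into part~(1) because showing that the atlas maps $f_!^a U_i\ra f_!^a X$ lie in $(n-1)$-$\sQ$ is exactly an instance of part~(2) one level down. To verify that $f_!^a u$ lies in $n$-$\sQ$ for $u:X\ra Y$ in $n$-$\sP$, you must test against \emph{every} representable $W\ra f_!^a Y$ with $W$ an object of $D$, and such a $W$ need not lie in the image of $f$; your phrase ``invoking the inductive hypothesis on its base-change to the atlas'' only treats test objects of the form $f_!^a Z$ for $Z\ra Y$ in $C$. The missing argument is: using the description of $f_!$ as a left Kan extension followed by sheafification, any map $W\ra f_!^a Y$ from a representable of $D$ factors, after refining along a covering $\{W_j\ra W\}$ in $\eta$, through some $h_{f(Z)}$ with $Z\ra Y$ in $C$; one then pulls back the (inductively obtained) $\sQ$-atlas of $f_!^a(X\times_Y h_Z)\ra h_{f(Z)}$ along $W_j\ra h_{f(Z)}$, and finally invokes the locality of $\sQ$ on the target --- condition~(3) of Definition~\ref{geometry} together with the corresponding locality of $n$-geometricity from \cite{TVII} --- to descend from the cover $\{W_j\}$ back to $W$. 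Without this factorization-and-locality step the induction does not close, so this is the point your write-up would need to supply before it could stand in for the citation.
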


\begin{proof}
This is proven in the categorical realm in \cite{TV} and the proof is similar in the $\infty$-categorical case (see also Section~2 of \cite{PY} for a proof in a close context).  
\end{proof}

It follows from Proposition~\ref{fshreikpreservesgs} that we have a functor 
\[    p:\textbf{Geom}\ra\bCatinf    \]
of $\infty$-categories sending a geometry $((C,\tau),\sP)$ to $\bSh(C,\tau;\sP)$ and a transformation of geometries $f:C\ra D$ to $p(f)=f^a_!$.  

\begin{ex}\label{exampledmtoar}
Consider the two geometries of Example~\ref{spectraldm} and Example~\ref{spectralart}.  Then the inclusion
$  i:((\bAff_k^{\,\alg},et),et)\ra((\bAff_k^{\,\alg},et),sm)   $
is a transformation of geometries and induces an inclusion
\[    i_!^a:\textbf{dDM}_k\ra\textbf{dAr}_k    \]
of geometric stacks.  Likewise, consider the two geometries of Example~\ref{analyticdm} and Proposition~\ref{analyticart}.  Then the map 
$   j:((\bAff_k^{\,\an},et),et)\ra((\bAff^{\,\an},et),sm)  $ 
is a transformation of geometries and induces an inclusion
\[   j_!^a:\textbf{dAnDM}_k\ra\textbf{dAnAr}_k    \]
of geometric stacks.  
\end{ex}

The situation represented in Example~\ref{exampledmtoar} obviously extends to the derived smooth setting described by a functor
\[   k_!^a:\textbf{dSmDM}_k\ra\textbf{dSmAr}_k   \]
induced from the transformation of geometries $k:((\bAff_k,et),et)\ra((\bAff_k,et),sm)$ since \'etale morphisms are smooth.

Let $\textbf{Aff}_k$ denote the category whose objects are open submanifolds of $k^n$ for some $n$.  We call objects in $\textbf{Aff}_k$ affine $k$-manifolds.  We endow $\textbf{Aff}_k$ with the \'etale topology generated by locally homeomorphic or locally biholomorphic maps depending on the ground field $k$.  We let $\textbf{SmSt}_k:=\bSh(\textbf{Aff}_k,et)$ denote the $\infty$-category of stacks on the site of affine $k$-manifolds endowed with the \'etale topology and 
\[  \textbf{SmAr}_k:=\bSh(\textbf{Aff}_k,et;sm)  \]
the $\infty$-category of geometric stacks on the geometry $((\textbf{Aff}_k,et),sm)$.

\begin{prop}\label{}
The map $\Spec^{\Man_k}$ induces a transformation of geometries 
\[  ((\textbf{\textup{Aff}}_k,et),sm)\ra((\bdAff_k,et),sm)  \] 
and induces a fully faithful functor
\[     \textbf{\tu{SmAr}}_k\ra\dSmAr_k  \]
between $\infty$-categories of geometric stacks.
\end{prop}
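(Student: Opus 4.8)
The plan is to verify the two assertions in turn: that $\Spec^{\Man_k}$ restricts to a transformation of geometries $((\textbf{Aff}_k,et),sm)\ra((\bdAff_k,et),sm)$, and that the resulting functor on geometric stacks is fully faithful, the latter being the substantive point. One first checks that $\Spec^{\Man_k}$ takes values in $\bdAff_k$, since an affine $k$-manifold, being an open submanifold of $k^n$, is \'etale over $k^n$ and hence an affine derived $k$-manifold; thus $\Spec^{\Man_k}$ restricts to a functor $\textbf{Aff}_k\ra\bdAff_k$.

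To see this functor is a transformation of geometries I would check the two conditions of the definition. For topological continuity I would invoke Lemma~\ref{fiscontinuous}, by which it suffices that $\Spec^{\Man_k}$ be left exact and preserve covering families. Preservation of covers is immediate: the generating admissible maps of the \'etale topology on $\textbf{Aff}_k$ are local homeomorphisms (or local biholomorphisms), and $\Spec^{\Man_k}$ carries such a map to an \'etale map of affine derived $k$-manifolds in the sense of Definition~\ref{coveringmaps}. Left exactness reduces to preservation of finite products, which is clear, and of pullbacks along admissible maps; since an admissible map is in particular a submersion, such a pullback is a transverse intersection and is therefore preserved in $\dMan_k$ by the comparison of fiber products in $\bMan_k$ and $\dMan_k$ for transverse diagrams. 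Finally, $\Spec^{\Man_k}$ sends $sm$ to $sm$: a smooth map of affine $k$-manifolds factors locally as an \'etale map into $k^n\times V$ followed by the projection to $V$, which, transported along $\Spec^{\Man_k}$, is exactly the local factorization demanded by Definition~\ref{smooth}.

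Granting this, Proposition~\ref{fshreikpreservesgs} yields the functor $f_!^a:\textbf{SmAr}_k\ra\dSmAr_k$ with $f=\Spec^{\Man_k}$, preserving geometric stacks. By construction $f_!^a=a\circ f_!$ is simultaneously left exact and a left adjoint to $f^*$, so it preserves all finite limits and all colimits. Since it is a left adjoint, $f_!^a$ is fully faithful precisely when the unit $\eta:\mathrm{id}\ra f^*f_!^a$ is an equivalence, and I would establish this on geometric stacks by induction on the geometric level. For the base case, full faithfulness on representables holds: for affine $k$-manifolds $U,V$, subcanonicity (Proposition~\ref{subcanonical}) together with $f_!^a h_U\simeq h_{\Spec^{\Man_k}U}$ gives
\[ \Map_{\dSmAr_k}(f_!^a h_U,f_!^a h_V)\simeq\Map_{\bdAff_k}(\Spec^{\Man_k}U,\Spec^{\Man_k}V)\simeq\Map_{\textbf{Aff}_k}(U,V)\simeq\Map_{\textbf{SmSt}_k}(h_U,h_V), \]
the middle equivalence being the full faithfulness of $\Spec^{\Man_k}$ on manifolds (Proposition~\ref{fff} with Proposition~\ref{danspdanst}); equivalently $\eta_{h_U}$ is an equivalence. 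For the inductive step, present an $n$-geometric stack $Y$ as a realization $Y\simeq|G|=\colim_m G_m$ of an $(n-1)$-smooth groupoid $G$ whose terms $G_m$ are disjoint unions of $(n-1)$-geometric stacks obtained from an atlas by iterated fiber products. Since $f_!^a$ preserves these fiber products and the realization, $f_!^a Y$ is presented by the groupoid $f_!^a G$; provided $f^*$ carries this realization to that of $f^*f_!^a G$, the inductive hypothesis gives $f^*f_!^a G_m\simeq G_m$ levelwise and hence $f^*f_!^a Y\simeq Y$.

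The main obstacle is exactly the proviso in the last step: controlling the right adjoint $f^*=(\Spec^{\Man_k})^*$ on the simplicial colimits presenting geometric stacks. A priori $f^*$ only manifestly preserves finite limits, so one must show it commutes with the geometric realizations of smooth groupoids, equivalently that it preserves effective epimorphisms and descent so that an atlas $G_0\ra Y$ pulls back to an atlas and its \v{C}ech nerve is preserved. This is where the compatibility of the two \'etale topologies under $\Spec^{\Man_k}$ enters: one needs that the \'etale topology on $\bdAff_k$ is the one induced from $\textbf{Aff}_k$ along $f$, so that $f^*$ transports covers correctly and descends the relevant colimits. With this compatibility in hand the induction closes and $f_!^a$ is fully faithful, following the pattern of \cite{TV} and \cite{PY}.
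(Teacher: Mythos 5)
Your argument for the first assertion is essentially the paper's: the paper likewise reduces topological continuity to Lemma~\ref{fiscontinuous} (left exactness plus preservation of covering families) and then invokes Proposition~\ref{fshreikpreservesgs} to get the induced functor on geometric stacks. You are in fact more careful than the paper on two points it passes over in silence: you verify that $\Spec^{\Man_k}$ carries smooth maps of affine $k$-manifolds to smooth maps in the sense of Definition~\ref{smooth}, which is part of the definition of a transformation of geometries, and you justify left exactness via preservation of transverse pullbacks. The real divergence is on full faithfulness: the paper's proof stops after producing $(\Spec^{\Man_k})^a_!$ and never argues that it is fully faithful (Proposition~\ref{fshreikpreservesgs} only gives preservation of geometricity, with the proof deferred to the analogues in \cite{TV} and \cite{PY}), whereas you supply a genuine strategy --- reduce to the unit $\eta:\mathrm{id}\ra f^*f_!^a$ being an equivalence, check it on representables via Proposition~\ref{fff}, and induct on the geometric level through the groupoid presentation $Y\simeq|G|$. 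The proviso you flag is the correct one and is where all the content lies: since $f^*$ is only a right adjoint of $f_!^a$ a priori, one must know that it commutes with the geometric realizations, which amounts to the \'etale topology on $\textbf{Aff}_k$ being induced along $\Spec^{\Man_k}$ from that on $\bdAff_k$ (so that $f^*$ commutes with sheafification, or equivalently admits a further right adjoint $f_*$ on sheaves and hence preserves colimits of stacks). That compatibility does hold here because \'etale maps and covers of affine $k$-manifolds are detected after applying $\Spec^{\Man_k}$, so your induction closes; but since you leave it as a proviso rather than verifying it, your write-up is, like the paper's, incomplete at exactly this point --- the difference being that you have located the missing step precisely while the paper does not acknowledge it.
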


\begin{proof}
The functor $\Spec^{\Man_k}$ is left exact and preserves covering families and therefore, by Proposition~\ref{fiscontinuous}, is topologically continuous.  Therefore, the functor $\Spec^{\Man_k}$ is a transformation of geometries.  By Proposition~\ref{fshreikpreservesgs}, we have an induced functor
\[   (\Spec^{\Man_k})^a_!:\textbf{\tu{SmAr}}_k\ra\dSmAr_k  \]
between geometric stacks.
\end{proof}

The fully faithful functor $(\Spec^{\Man_k})^a_!$ admits a right adjoint 
\[   t_0:\dSmAr_k\ra\textbf{SmAr}_k  \]
called the truncation functor.

\section{Cotangent complexes}\label{candt}

In this section we define the cotangent complex of a derived $k$-manifold using the tangent bundle construction of Lurie \cite{L2} (see also \cite{Sc}).  This has been applied to the complex analytic setting in \cite{Po3}.  More generally, we define the relative cotangent complex in the derived smooth setting and describe its relationship to the algebraic cotangent complex.  The algebraic description will prove useful for defining differential forms in the next section.

Given a locally presentable $\infty$-category $C$, we will generically denote by 
\[   \Omega^\infty:\bSp(C)\ra C   \]
the functor given by evaluation at the $0$-sphere, ie. given a spectrum object $F:\S^{\tu{fin}}_*\ra C$ where $\S^{\tu{fin}}_*$ is the $\infty$-category of pointed finite spaces, then $\Omega^\infty(F):=F(S^0)$ where $S^0$ is the $0$-sphere.  By Proposition~1.4.4.4 of \cite{L2}, this functor admits a left adjoint which we denote by $\Sigma^\infty_+$.

Consider the functor 
\[    \Str^\loc_{\Man_k}(A)_{\cO_A//-}:\Str^\loc_{\Man_k}(A)_{\cO_A/}\ra\bCatinf   \]
sending a $k$-manifold structure $\cO'_A$ to $\Str^\loc_{\Man_k}(A)_{\cO_A//\cO'_A}$.  The cofibered $\infty$-category associated to this functor will be denoted by
\[    \pi:\int_{\cO_A}\Str^\loc_{\Man_k}(A)_{\cO_A//-}\ra\Str^\loc_{\Man_k}(A)_{\cO_A/}.  \]
The $\infty$-category $\Str^\loc_{\Man_k}(A)_{\cO_A//\cO'_A}$ is locally presentable by combining Corollary~1.16 of \cite{Po1} and Proposition~5.5.3.11 of \cite{L1} and thus $\pi$ is a bifibered $\infty$-category.  We denote the associated straightening functor of the fibration by
\[     S(\pi):(\Str^\loc_{\Man_k}(A)_{\cO_A/})^\circ\ra\bCatinf.  \]

The functor $S(\pi)$ factors through the $\infty$ category of locally presentable $\infty$-categories and so we can construct a functor
\[      i\circ\varsigma\circ S(\pi):(\Str^\loc_{\Man_k}(A)_{\cO_A/})^\circ\ra\bCatinf    \]
where $\varsigma:\bCatinf^{\lp}\ra\bCatinf^{\lp,\perp}$ is the functor introduced in Section~\ref{dsas} sending a locally presentable $\infty$-category $C$ to the stable, locally presentable $\infty$-category of spectrum objects in $C$ and $i:\bCatinf^{\lp,\perp}\ra\bCatinf$ is the inclusion.  

The fibered category 
\[    t:T_{\cO_A}=\int_{\cO_A} (i\circ\varsigma\circ S(\pi))\ra\Str^\loc_{\Man_k}(A)_{\cO_A/} \]
associated to this composition is called the \textit{tangent bundle} to the $\infty$-category $\Str^\loc_{\Man_k}(A)_{\cO_A/}$.  The objects of $T_{\cO_A}$ are pairs $(f,P)$ consisting of a local morphism of $k$-manifold structures $f:\cO_A\ra\cO_A'$ together with an object $P$ in $\bSp(\Str^\loc_{\Man_k}(A)_{\cO_A//\cO'_A})$.  By Lemma~7.3.3.9 of \cite{L2}, the forgetful functor $\Str^\loc_{\Man_k}(A)_{\cO_A//\cO'_A}\ra\Str^\loc_{\Man_k}(A)_{/\cO'_A}$ induces an equivalence
\[     \bSp(\Str^\loc_{\Man_k}(A)_{\cO_A//\cO'_A})\xras\bSp(\Str^\loc_{\Man_k}(A)_{/\cO'_A})   \]
of $\infty$-categories.  Therefore, we can consider $P$ as a $\cO'_A$-module.  The tangent bundle $T_{\cO_A}$ is locally presentable.

We have a natural morphism
\begin{diagram}
T_{\cO_A} & &\rTo^{F}  & & \bFun(\Delta^1,\Str^\loc_{\Man_k}(A)_{\cO_A/}) \\
            &\rdTo^{t}  & &  \ldTo_{c} &\\
            &&  \Str^\loc_{\Man_k}(A)_{\cO_A/} &&
\end{diagram}   
of fibered $\infty$-categories where the fiber of $F$ over a local $k$-manifold structure $\cO'_A$ can be identified with the functor
\[  F_{\cO'_A}:(T_{\cO_A})_{\cO'_A}\ra\Str_{\Man_k}^\loc(A)_{\cO_A//\cO'_A}  \] 
sending a pair $(f:\cO_A\ra\cO'_A,P)$ to $\Omega^\infty(P)$ where
\[  \Omega^\infty:\bSp(\Str_{\Man_k}^\loc(A)_{\cO_A//\cO'_A})\ra\Str_{\Man_k}^\loc(A)_{\cO_A//\cO'_A}  \]
and $c$ is the codomain functor induced from $\{1\}\hookrightarrow\Delta^1$.  We also use the notation
\begin{diagram}
&& \cO_A &&\\
 &\ldTo & &\rdTo^f &\\
\cO'_A\oplus P  && \rTo && \cO'_A
\end{diagram}  
for the object $\Omega^\infty(P)$ where $\cO'_A\oplus P$ is thought of as being a local $k$-manifold structure infinitesimally close to $\cO_A'$.  

Composing $F$ with the domain functor 
\[  d:\bFun(\Delta^1,\Str^\loc_{\Man_k}(A)_{\cO_A/})\ra\Str^\loc_{\Man_k}(A)_{\cO_A/}  \] 
induced by $\{0\}\hookrightarrow\Delta^1$, we obtain a composition functor
\[    (-\oplus_{\cO_A}-):=d\circ F:T_{\cO_A}\ra\Str^\loc_{\Man_k}(A)_{\cO_A/}   \]
whose image of an object $(f:\cO_A\ra\cO'_A,P)$ we denote by $\cO'_A\oplus_{\cO_A} P$.  

\begin{lem}\label{lametal}
The functor $(-\oplus_{\cO_A}-)$ admits a left adjoint.
\end{lem}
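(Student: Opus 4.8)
The plan is to recognize $(-\oplus_{\cO_A}-)$ as the split square-zero extension functor of the presentable $\infty$-category $C:=\Str^\loc_{\Man_k}(A)_{\cO_A/}$ and to produce its left adjoint by the adjoint functor theorem, exactly as in the general theory of tangent bundles in \cite{L2}. The resulting left adjoint is the relative cotangent complex functor over $\cO_A$, so this lemma is precisely the statement that makes the cotangent complex well defined.

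First I would record presentability. The $\infty$-category $\Str^\loc_{\Man_k}(A)$ is locally presentable by Corollary~1.16 of \cite{Po1} together with Proposition~5.5.3.11 of \cite{L1}, and the undercategory $C=\Str^\loc_{\Man_k}(A)_{\cO_A/}$ is again presentable by Proposition~5.5.3.11 of \cite{L1}. Consequently the tangent bundle $T_{\cO_A}$ is presentable, as already noted above, fiberwise stabilization preserving presentability (cf. Proposition~1.4.4.4 and Section~7.3 of \cite{L2}). I would then identify the functor: by construction $F:T_{\cO_A}\ra\bFun(\Delta^1,C)$ lies over $C$ relative to the codomain functor $c$, and its fiber over a local $k$-manifold structure $\cO_A'$ sends $(f,P)$ to $\Omega^\infty(P)$, i.e. to the split square-zero extension $\cO_A'\oplus P\ra\cO_A'$ regarded as an object of the arrow category. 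Post-composing with the domain functor $d$ recovers $(-\oplus_{\cO_A}-)$, which is exactly the square-zero extension functor of the formalism of Section~7.3.2 of \cite{L2}; its left adjoint, once shown to exist, is the functor $\cO_A'\mapsto(\cO_A',L_{\cO_A'/\cO_A})$.

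To establish existence I would verify the hypotheses of the adjoint functor theorem. The functor $\Omega^\infty$ preserves limits, being a right adjoint by Proposition~1.4.4.4 of \cite{L2}, and the domain functor $d$ preserves limits since evaluation functors on $\bFun(\Delta^1,C)$ compute limits pointwise; moreover limits in $T_{\cO_A}$ are computed through the codomain projection and the module part, so the composite $(-\oplus_{\cO_A}-)$ preserves small limits. It is likewise accessible, being assembled from accessible functors between accessible $\infty$-categories. By Corollary~5.5.2.9 of \cite{L1} a limit-preserving, accessible functor between presentable $\infty$-categories admits a left adjoint, which gives the claim. Equivalently, one may appeal directly to Section~7.3.2 of \cite{L2}, where this left adjoint is constructed for an arbitrary presentable $\infty$-category.

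The main obstacle is not the adjoint functor theorem itself but the bookkeeping needed to match the tangent-bundle construction assembled here from the theory of structured spaces with Lurie's presentable-fibration framework, so that the composite $d\circ F$ really is accessible and limit-preserving. The essential input is the local presentability of each fiber $\Str^\loc_{\Man_k}(A)_{\cO_A//\cO_A'}$, which is what makes $\pi$ a presentable bifibration and hence guarantees that $F$, and therefore $(-\oplus_{\cO_A}-)$, behaves as required.
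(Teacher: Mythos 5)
Your proposal is correct and follows the same route as the paper: the paper's proof simply observes that $(-\oplus_{\cO_A}-)$ is accessible and limit-preserving and invokes the adjoint functor theorem (5.5.2.9 of \cite{L1}). Your write-up fills in the presentability and limit-preservation bookkeeping that the paper leaves implicit, but the argument is the same.
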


\begin{proof}
The functor $(-\oplus_{\cO_A}-)$ is accessible and preserves limits and therefore, by Proposition~5.5.2.9 of \cite{L1}, admits a left adjoint.
\end{proof}

The left adjoint supplied by Lemma~\ref{lametal} is called the \textit{relative cotangent complex functor} (on $\Str^\loc_{\Man_k}(A)_{\cO_A/}$) and is denoted $L$.  

The relative cotangent complex of a morphism $f:(|A|,\cO_A)\ra(|B|,\cO_B)$ of derived $k$-manifolds will be defined as the object given by applying the relative cotangent complex functor to the morphism $f^\sharp:f^*\cO_B\ra\cO_A$ of local $k$-manifold structures on $|A|$.

\begin{dfn}\label{cc}
Let $A=(|A|,\cO_A)$ and $B=(|B|,\cO_B)$ be derived $k$-manifolds and $f:A\ra B$ a morphism.  The \textit{relative cotangent complex} of $f$ is given by $\bb{L}_f:=L(f^\sharp)$.  
\end{dfn}

The $\cO_A$-module $\bb{L}_f$ is also denoted $\bb{L}_{A/B}$ and called the cotangent complex of $A$ over $B$ if the map $f$ is understood.  If $B$ is a final object, then $\bb{L}_{A/B}\simeq\bb{L}_A$.

By definition, we have the interpretation that $\bb{L}_f=\Sigma_+^\infty(\id_{\cO_A}\circ f^\sharp)$ where 
\[  \Sigma_+^\infty:\Str_{\Man_k}^\loc(A)_{f^*\cO_B//\cO_A}\ra\bMod(\cO_A)   \]
is the suspension spectrum functor precomposed with the forgetful functor 
\[  \Str_{\Man_k}^\loc(A)_{f^*\cO_B//\cO_A}\ra\Str_{\Man_k}^\loc(A)_{/\cO_A}  \] 
(which is an equivalence of $\infty$-categories by Lemma~7.3.3.9 of \cite{L2}).  

Let 
\[  \tu{Der}_{\cO_A}(\cO'_A,P):=\Map_{\Str^\loc_{\Man_k}(A)_{\cO_A//\cO'_A}}(\cO'_A,\cO'_A\oplus_{\cO_A} P)  \] 
be the space of morphisms thought of as $k$-smooth $\cO_A$-derivations of $\cO'_A$ into $P$.  Then there exists an equivalence
\[    \Map_{\bMod(\cO_A)}(\bb{L}_{A/B},P)\ra\tu{Der}_{\cO_A}(\cO'_A,P)   \]
of spaces.  

Some properties of the cotangent complex of a morphism of derived $k$-manifolds are collected in the result below.  For this result we will use the (abuse of) notation $f^*$ for the composite map
\[    \bMod(\cO_B)\xra{(f^\sharp)^*}\bMod(f^*\cO_B)\xra{f^\sharp_!}\bMod(\cO_A)  \]
of $\infty$-categories.

\begin{prop}\label{cca}
Let $A=(|A|,\cO_A)$, $B=(|B|,\cO_B)$, $C=(|C|,\cO_C)$ and $D=(|D|,\cO_D)$ be derived $k$-manifolds.
\begin{enumerate}
\item Let $f:A\ra B$ be a morphism.  Then there exists an equivalence
\[   \bb{L}_f\ra\Sigma^\infty(\cO_A\coprod_{f^*\cO_B}\cO_A ) \]
of $\cO_A$-modules where $\Sigma^\infty:\tu{\Str}^\loc_{\Man_k}(A)_{\cO_A//\cO_A}\ra\bMod(\cO_A)$.
\item For any commutative diagram
\begin{diagram}
 & & B & &  \\
 &\ruTo^f & &\rdTo^g &\\
 A& &\rTo & & C
\end{diagram} 
the induced sequence
\[    f^*\bb{L}_{g}\ra\bb{L}_{g\circ f}\ra\bb{L}_{f}   \]
is a cofiber sequence.  In particular, there exists an equivalence
\[     \bb{L}_f\ra\tu{cofib}(f^*\bb{L}_A\ra\bb{L}_B)  \]
of $\cO_A$-modules.
\item For any pullback diagram
\begin{diagram}
D &\rTo^{} &C\\
\dTo_{f}  &        &\dTo_{}\\
B  &\rTo^{}       &A
\end{diagram}
in the $\infty$-category of derived $k$-manifolds, the induced morphism 
\[  f^*(\bb{L}_{A/B})\ra\bb{L}_{D/B}   \]
is an equivalence of $\cO_D$-modules.  
\end{enumerate}
\end{prop}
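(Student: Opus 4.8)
The plan is to treat all three statements as instances of the general theory of the cotangent complex in the tangent-bundle formalism of \cite{L2}, Section~7.3, which is precisely the formalism through which $\bb{L}$ was defined above, namely as the left adjoint $L$ of Lemma~\ref{lametal} to the trivial square-zero extension functor $(-\oplus_{\cO_A}-)$ on the tangent bundle $T_{\cO_A}$. The required hypotheses are already in place: the over-under categories $\Str^\loc_{\Man_k}(A)_{\cO_A//\cO'_A}$ are locally presentable (as established above), the module categories $\bMod(\cO_A)=\bSp(\Str^\loc_{\Man_k}(A)_{/\cO_A})$ are stable and presentable, and by Lemma~7.3.3.9 of \cite{L2} we may freely pass between over-under and over categories. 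First I would record these identifications so that each abstract result of \cite{L2} applies verbatim after replacing its ambient presentable $\infty$-category by $\Str^\loc_{\Man_k}(A)_{\cO_A/}$.

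For part~(1), I would unwind the definition $\bb{L}_f=\Sigma^\infty_+(\id_{\cO_A}\circ f^\sharp)$, where $\id_{\cO_A}\circ f^\sharp$ is the terminal object of $\Str^\loc_{\Man_k}(A)_{f^*\cO_B//\cO_A}$. The point is the formal identity, valid in any presentable pointed $\infty$-category, that the absolute suspension spectrum of this terminal object agrees with the reduced suspension spectrum $\Sigma^\infty$ of the codiagonal object $\cO_A\coprod_{f^*\cO_B}\cO_A$, regarded in $\Str^\loc_{\Man_k}(A)_{\cO_A//\cO_A}$ via the two coprojections and the fold map. This is the pushout presentation of the cotangent complex from Section~7.3 of \cite{L2}; transporting it through the above identifications yields the stated equivalence of $\cO_A$-modules.

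For part~(2), given the composition $A\xra{f}B\xra{g}C$, applying $\Sigma^\infty_+$ to the (homotopy) pushout relating the three codiagonals produces the transitivity cofiber sequence $f^*\bb{L}_g\ra\bb{L}_{g\circ f}\ra\bb{L}_f$ in $\bMod(\cO_A)$, following the transitivity result of Section~7.3 of \cite{L2}. Since $\bMod(\cO_A)$ is stable, this sequence exhibits $\bb{L}_f$ as the cofiber of the map $f^*\bb{L}_g\ra\bb{L}_{g\circ f}$; the ``in particular'' clause is then the special case $C=*$ the terminal derived $k$-manifold, where $g:B\ra *$ makes $\bb{L}_g$ and $\bb{L}_{g\circ f}$ the absolute cotangent complexes of $B$ and $A$ and exhibits $\bb{L}_f$ as the cofiber of the comparison map between the pulled-back absolute cotangent complex of the target and that of the source.

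Part~(3) is where the genuinely non-formal input enters, and I expect it to be the main obstacle. Abstractly, the base-change theorem of Section~7.3 of \cite{L2} asserts that a \emph{pushout} square of structure objects induces a base-change equivalence of relative cotangent complexes; to invoke it I must first establish that the geometric pullback square defining $D=B\times_A C$ induces a pushout $\cO_D\simeq\cO_B\coprod_{\cO_A}\cO_C$ of local $k$-manifold structures on $|D|$. This compatibility between the pullback of derived $k$-manifolds and the coproduct of their $k$-manifold structures is the one step that carries genuinely smooth-geometric content rather than formal $\infty$-categorical manipulation: it rests on the structured-space input that $\cP_k\ra\Man_k$ is unramified, the algebraic-model shadow of which is recorded in Proposition~\ref{preservespullbacks}. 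Granting this pushout, the abstract base-change equivalence applies and identifies the canonical comparison morphism associated to the square as an equivalence of $\cO_D$-modules, completing the proof.
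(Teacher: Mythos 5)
Your proposal is correct and takes essentially the same route as the paper: all three parts are reduced to the tangent-bundle formalism of Section~7.3 of \cite{L2}, with (1) obtained from the pushout presentation of the cotangent complex (the paper runs the underlying adjunction $-\otimes_{f^*\cO_B}\cO_A\dashv F$ explicitly where you cite the resulting identification), (2) from the transitivity cofiber sequence with $C=*$ for the ``in particular'' clause, and (3) from the base-change result Proposition~7.3.3.7 of \emph{loc.\ cit.} The only divergence is that the paper disposes of (3) with a one-line citation, whereas you correctly flag the additional verification that the geometric pullback induces a pushout of structure objects (via unramifiedness and Proposition~\ref{preservespullbacks}); this point is left implicit in the paper, so your treatment is if anything the more careful one.
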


\begin{proof}
By definition we have an equivalence
\[  \Map_{\bMod(\cO_A)}(\bb{L}_{A/B},P)\xras\Map_{\Str^\loc_{\Man_k}(A)_{f^*\cO_B//\cO_A}}(\cO_A,\Omega^\infty(P))   \]
of spaces.  Now consider the adjunction
\[     -\otimes_{f^*\cO_B}\cO_A:\Str^\loc_{\Man_k}(A)_{f^*\cO_B//\cO_A}\rightleftarrows \Str^\loc_{\Man_k}(A)_{\cO_A//\cO_A}:F   \]
where $F$ is the forgetful functor.  We have a chain of equivalences
\begin{align*}
\Map_{\Str^\loc_{\Man_k}(A)_{f^*\cO_B//\cO_A}}(\cO_A,\Omega^\infty(P)) &\xras\Map_{\Str^\loc_{\Man_k}(A)_{f^*\cO_B//\cO_A}}(\cO_A,F\circ\Omega^\infty(P))  \\
             &\xras\Map_{\Str^\loc_{\Man_k}(A)_{\cO_A//\cO_A}}(\cO_A\otimes_{f^*\cO_B}\cO_A,\Omega^\infty(P))  
\end{align*}
By Proposition~\ref{} of \cite{L2}, the tensor product $\cO_A\otimes_{f^*\cO_B}\cO_A$ corresponds to cofibered product $\cO_A\coprod_{f^*\cO_B}\cO_A$.  Therefore, there exists an equivalence
\[  \Map_{\Str^\loc_{\Man_k}(A)_{\cO_A//\cO_A}}(\cO_A\otimes_{f^*\cO_B}\cO_A,\Omega^\infty(P))\xras\Map_{\bMod(\cO_A)}(\Sigma^\infty(\cO_A\coprod_{f^*\cO_B}\cO_A),P)   \]
which proves 1.

The second statement is an application of Corollary~7.3.3.6 of \cite{L2} which in our context states that there exists a canonical cofiber sequence
\[    f^\sharp_!L(f^*\circ g^\sharp)\ra L((g\circ f)^\sharp)\ra L(f^\sharp).  \]
The result then follows from the equivalence $f^\sharp_!L(f^*\circ g^\sharp)\simeq f^\sharp_!\circ (f^\sharp)^*L(g^\sharp)$.  The complementary statement follows from setting $C$ to be the final object.  The final statement is an application of Proposition~7.3.3.7 of \textit{loc. cit.}.
\end{proof}

To confirm to the reader that Definition~\ref{cc} is reasonable, we show that it reduces to the standard notion of cotangent space when $A$ is a (non-derived) smooth $k$-manifold.  Recall that the sheaf of smooth one-forms on a $k$-manifold $M=(|M|,\cO_M^\disc)$ can be realized as the $\cO_M^\disc$-module representing (K\"ahler) derivations where the category of $\cO_M^\disc$-modules is given by the category of abelian group objects in category of locally $k$-smooth ringed spaces over $\cO_M^\disc$.

\begin{lem}\label{usual}
Let $M=(|M|,\cO_M^\disc)$ be a $k$-manifold and $A=\Spec^{\Man_k}M$.  Then $\pi_0\bb{L}_{A}$ is equivalent to the usual cotangent sheaf of smooth forms on $M$.
\end{lem}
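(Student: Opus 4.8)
The plan is to exploit the universal property of the cotangent complex together with the explicit model for square-zero extensions built in the proof of Proposition~\ref{modequivalence}. By construction the absolute cotangent complex $\bb{L}_A$ corepresents the space of $k$-smooth derivations: for any $\cO_A$-module $P$ there is an equivalence $\Map_{\bMod(\cO_A)}(\bb{L}_A,P)\simeq\tu{Der}_k(\cO_A,P)$, where a $k$-smooth derivation is a section over $\cO_A$ of the projection $\cO_A\oplus_{\cO_A}P\ra\cO_A$. Since $M$ is an ordinary $k$-manifold the structure $\cO_A$ is discrete, so $\pi_0\cO_A\simeq\cO_M^\disc$, and computing $\pi_0\bb{L}_A$ amounts to evaluating this corepresentation on discrete $\cO_M^\disc$-modules $E$, where $\Hom_{\cO_M^\disc}(\pi_0\bb{L}_A,E)\simeq\pi_0\tu{Der}_k(\cO_A,E)$. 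It therefore suffices to identify $k$-smooth derivations of $\cO_A$ into a discrete module $E$ with the classical ones.

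First I would unwind the square-zero extension using the explicit model $\cO_A\oplus E$ of Proposition~\ref{modequivalence}, whose value on a pair $(M,U)$ assembles, over the test maps $f\in\cO_A(M)(U)$, the sections $\Gamma(\cO_A^\alg|U,(f^\alg)^*(T_M)\otimes_{\cO_A^\alg}E)$. A section of $\cO_A\oplus E\ra\cO_A$ based at the identity is then an assignment $f\mapsto\xi_f$ with $\xi_f\in\Gamma((f^\alg)^*(T_M)\otimes E)$, natural in $M$ with respect to the $k$-manifold structure. The key observation is that this naturality is exactly the chain rule: for every $k$-smooth map $\phi:M\ra N$ one must have $\xi_{\phi\circ f}=(d\phi)(\xi_f)$. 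Taking $M$ to be the target of the tautological map $\id:|M|\ra|M|$ coming from $A=\Spec^{\Man_k}M$, the whole derivation is determined by the single section $\xi_{\id}\in\Gamma(T_M\otimes E)$, and conversely any such section defines a derivation. Hence $\tu{Der}_k(\cO_A,E)\simeq\Gamma(T_M\otimes E)$.

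To finish, I would use that on a smooth $k$-manifold the cotangent sheaf $\Omega^1_M$ is locally free with $T_M\simeq(\Omega^1_M)^\vee$, so that $\Gamma(T_M\otimes E)\simeq\Hom_{\cO_M^\disc}(\Omega^1_M,E)$ naturally in $E$. Combining with the corepresentation above gives $\Hom_{\cO_M^\disc}(\pi_0\bb{L}_A,E)\simeq\Hom_{\cO_M^\disc}(\Omega^1_M,E)$ for all discrete $E$, whence $\pi_0\bb{L}_A\simeq\Omega^1_M$ by the Yoneda lemma. The main obstacle is the identification in the second paragraph: one must check that the naturality packaged into a map of local $k$-manifold structures coincides with the classical chain-rule compatibility defining $\cC^\infty$ (resp. holomorphic) derivations, and is not the much larger space of purely algebraic K\"ahler derivations of $\cO_M^\disc$ over $k$. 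This is precisely the point at which the $k$-smooth, rather than merely algebraic, nature of $\cO_A$---visible in the appearance of the tangent sheaf $T_M$ in Proposition~\ref{modequivalence}---is indispensable, and it is what guarantees that the derived cotangent complex recovers the correct geometric object.
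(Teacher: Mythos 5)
Your proposal is correct and follows the same overall skeleton as the paper's proof: both reduce the computation of $\pi_0\bb{L}_A$ to the corepresentability of derivations, observe that mapping into a discrete module factors through $\pi_0$, and then identify derivations of the discrete structure $\cO_A$ into a discrete module with the classical smooth derivations on $M$. The difference lies in how the last identification is carried out. The paper sets up a t-structure on $\bMod(\cO_A)$, identifies its heart with abelian group objects in locally $k$-smooth ringed spaces over $\cO_M^\disc$, and then simply cites the classical fact that smooth derivations of a $k$-manifold are represented by the module $\Omega_M$ of ($\cC^\infty$- resp.\ holomorphic) K\"ahler differentials, which models the sheaf of $k$-smooth forms. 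You instead unwind the explicit square-zero extension $\cO_A\oplus E$ built in the proof of Proposition~\ref{modequivalence}, recognize a derivation as a chain-rule-compatible family $f\mapsto\xi_f$ determined by its value on the universal element $\id_M$, obtain $\tu{Der}_k(\cO_A,E)\simeq\Gamma(T_M\otimes E)$ directly, and dualize using local freeness of $\Omega^1_M$. Your route is more self-contained --- it avoids the appeal to the representability of smooth derivations by smooth K\"ahler differentials and makes visible exactly where the $k$-smooth (rather than algebraic) structure enters --- at the cost of leaning on the details of the $\cO_A\oplus E$ construction; the paper's route is shorter but black-boxes precisely that classical input. The only point to be careful about in your version is the reduction to the universal element: an arbitrary $f\in\cO_A(N)(U)$ is a smooth map $U\ra N$ that need not extend to $M$, so one should factor it as $f\circ\id_U$ with $\id_U\in\cO_A(U)(U)$ the restriction of $\id_M$ along the admissible inclusion $U\subseteq |M|$, using naturality in the $\Man_k$-variable applied to $f:U\ra N$ itself; with that phrasing the argument closes.
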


\begin{proof}
The inclusion $\Str_{\Man_k}^\loc(A)_{/\cO_A}\ra\bFun(\Man_k,\bSh(A))_{/\cO_A}$ induces a fully faithful functor
\[  j:\bMod(\cO_A)\ra\bSp(\bFun(\Man_k,\bSh(A))_{/\cO_A})\xras(\bSh_{\bSp}(A)^{\Man_k})_{/\cO_A}  \]
between $\infty$-categories.  By Proposition~1.9 of \cite{Po2}, there exists a t-structure on $\bMod(\cO_A)$ given by $(\bMod(\cO_A)_{\leq 0},\bMod(\cO_A)_{\geq 0})$ where a $\cO_A$-module $P$ belongs to $\bMod(\cO_A)_{\leq 0}$ if and only if $j(P)$ belongs to $((\bSh_{\bSp}(A)^{\Man_k})_{/\cO_A})_{\leq 0}$ and likewise for $\bMod(\cO_A)_{\geq 0}$.  Further, there exists an equivalence
\[    \pi_0:\bMod(\cO_A)^\heartsuit\xras\textbf{Ab}(\Str_{\Man_k}^{\loc,0}(A)_{/\pi_0(\cO_A)})  \]
of $\infty$-categories where the left hand side denotes the heart of the t-structure and $\Str_{\Man_k}^{\loc,0}(A)$ denotes the subcategory of $\Str_{\Man_k}^\loc(A)$ spanned by objects whose $k$-manifold structure is $0$-truncated.  This $\infty$-category, the $\infty$-category of \textit{discrete $k$-manifold structured spaces} on $|A|$, can be identified with the (nerve of the) category of abelian group objects in locally $k$-smooth ringed spaces over $\cO_M^\disc\simeq\pi_0(\cO_A)$.  We denote this $\infty$-category by $\bMod(\cO_M)$.

Let $P$ be an object in $\bMod(\pi_0(\cO_A))$.  Then there exists a chain of equivalences
\[    \Map_{\bMod(\cO_A)}(\pi_0(\bb{L}_A),P)\xras\Map_{\bMod(\cO_A)}(\bb{L}_A,P)\xras\Map_{\Str^\loc_{\Man_k}(A)}(\cO_A,\cO_A\oplus P)\xras  \]
\[   \Map_{\bMod(\cO_A)^\heartsuit}(\pi_0(\cO_A),\pi_0(\cO_A\oplus P))\xras\Map_{\bMod(\cO_M)}(\cO_M^\disc,\cO_M^\disc\oplus P))     \]
where in the last equivalence we have identified $P$ with its corresponding $\cO_M^\disc$-module.  The final discrete space is the space of derivations for the $k$-manifold $M$ and is thus representable by the module $\Omega_{M}$ of K\"ahler differentials.  This module is a model for the sheaf of $k$-smooth forms on $|M|$.
\end{proof}

When $f:A=(|A|,\cO_A)\ra B=(|B|,\cO_B)$ is a smooth morphism of affine derived $k$-manifolds, then $\bb{L}_f$ is concentrated in degree zero.  Now let $f:A\ra B$ be an arbitrary morphism of derived $k$-manifolds.  The relationship between the relative cotangent complex $\bb{L}_f$ and the relative algebraic cotangent complex $\bb{L}_{f^\alg}$ is highlighted in the following result (for the algebraic setting see \cite{LXIV}).  Consider the following diagram
\begin{diagram}
\Str^\loc_{\Man_k}(A)_{f^*\cO_B/} &\rTo^{} &\Str^\loc_{\cP_k}(A)_{f^*\cO_B^\alg/}\\
\dTo_{\Sigma^\infty}  &        &\dTo_{\Sigma^\infty}\\
\bMod(\cO_A)  &\rTo^{\psi}     &\bMod(\cO_A^\alg)
\end{diagram}
where the bottom horizontal map $\psi$ is the equivalence of Proposition~\ref{modequivalence}.  This diagram is in general, not commutative, so the functor $\psi$ does not send $\bb{L}_f$ to $\bb{L}_{f^\alg}$ in general.  However there is a general situation where this is the case.

\begin{prop}
Let $f:A\ra B$ be a closed immersion between derived $k$-manifolds.  Then the natural map
\[    \bb{L}_{f^\alg}\ra\psi(\bb{L}_{f})   \]
is an equivalence.
\end{prop}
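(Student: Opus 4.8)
The plan is to reduce the claim to a comparison of derivation spaces, exploiting that $f$ being a closed immersion forces the relevant structure morphism $f^\sharp:f^*\cO_B\ra\cO_A$ to be an effective epimorphism, which is precisely the condition under which the algebraic model functor of Proposition~\ref{preservespullbacks} behaves well. First I would recall the two parallel descriptions from Proposition~\ref{cca}(1): on the smooth side $\bb{L}_f\simeq\Sigma^\infty(\cO_A\coprod_{f^*\cO_B}\cO_A)$ computed in $\Str^\loc_{\Man_k}(A)_{\cO_A//\cO_A}$, and on the algebraic side the analogous cotangent complex $\bb{L}_{f^\alg}\simeq\Sigma^\infty(\cO_A^\alg\coprod_{f^*\cO_B^\alg}\cO_A^\alg)$ computed in $\Str^\loc_{\cP_k}(A)_{\cO_A^\alg//\cO_A^\alg}$. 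The heart of the argument is to show that the noncommuting square displayed just before the statement \emph{does} commute when restricted to the relevant coproduct object, i.e. that $\psi$ carries $\cO_A\coprod_{f^*\cO_B}\cO_A$ to $\cO_A^\alg\coprod_{f^*\cO_B^\alg}\cO_A^\alg$.

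The key step is the following. Because $f$ is a closed immersion, $f^\sharp$ is an effective epimorphism of $k$-manifold structures, so the diagram
\[
  B\xra{f} A \xla{g} A
\]
(where one leg is $f$ and we take the self-coproduct) falls under the hypothesis of Proposition~\ref{preservespullbacks}. Dualizing, that proposition tells us the algebraic model functor $-^\alg$ preserves the \emph{pushout} $\cO_A\coprod_{f^*\cO_B}\cO_A$, sending it to $\cO_A^\alg\coprod_{f^*\cO_B^\alg}\cO_A^\alg$. Under the equivalence $\psi$ of Proposition~\ref{modequivalence}, which by construction intertwines the $\Omega^\infty/\Sigma^\infty$ adjunctions with passage to algebraic models, applying $\Sigma^\infty$ before or after $\psi$ then agrees on this object. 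Concretely I would check that for any $\cO_A^\alg$-module $P$ there is a natural chain of equivalences
\begin{align*}
  \Map_{\bMod(\cO_A^\alg)}(\psi(\bb{L}_f),P)
    &\simeq \tu{Der}_{f^*\cO_B}(\cO_A,\Omega^\infty P) \\
    &\simeq \tu{Der}_{f^*\cO_B^\alg}(\cO_A^\alg,\Omega^\infty P)
    \simeq \Map_{\bMod(\cO_A^\alg)}(\bb{L}_{f^\alg},P),
\end{align*}
where the middle equivalence is exactly the statement that $k$-smooth $f^*\cO_B$-derivations of $\cO_A$ valued in an algebraic module coincide with ordinary algebraic $f^*\cO_B^\alg$-derivations of $\cO_A^\alg$ once the source is an effective epimorphic image. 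The conclusion then follows by the Yoneda lemma.

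The main obstacle I expect is justifying that middle derivation-space equivalence, i.e.\ that the smooth and algebraic theories of derivations genuinely agree along a closed immersion. This is where the geometry enters: a general smooth derivation need not be algebraic, which is why the square before the statement fails to commute in general. The point to nail down is that for a closed immersion the relative cotangent complex is supported in a range where the $k$-smooth structure contributes nothing beyond its underlying algebraic data — intuitively, the ``extra'' smooth derivations all come from the tangent directions of the ambient smooth factor, and these are killed upon passing to the relative situation over $f^*\cO_B$ when $f^\sharp$ is surjective on $\pi_0$. I would make this precise by invoking the unramifiedness of the map $\cP_k\ra\Man_k$ (Proposition~11.10 of \cite{LIX}, already used in Proposition~\ref{preservespullbacks}), which controls exactly the discrepancy between smooth and algebraic relative cotangent complexes and shows it vanishes along admissible, hence along effective epimorphic, morphisms.
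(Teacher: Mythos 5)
Your proposal is a genuine argument, whereas the paper does not actually prove this statement internally: its entire proof is a pointer to Corollary~5.2.7 of \cite{Sp1} (and Corollary~1.31 of \cite{Po3}). So you are taking a different route simply by virtue of supplying one, and the route you choose is the natural reconstruction from the paper's own ingredients: the pushout description $\bb{L}_f\simeq\Sigma^\infty(\cO_A\coprod_{f^*\cO_B}\cO_A)$ from Proposition~\ref{cca}(1), the effective-epimorphism hypothesis built into closed immersions, and the unramifiedness of $\cP_k\ra\Man_k$ that already powers Proposition~\ref{preservespullbacks}. This is essentially what the cited external results do, and your reduction to a comparison of derivation spaces $\tu{Der}_{f^*\cO_B}(\cO_A,-)\simeq\tu{Der}_{f^*\cO_B^\alg}(\cO_A^\alg,-)$ correctly isolates where the geometry enters. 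Two steps deserve more care than you give them. First, Proposition~\ref{preservespullbacks} is a statement about pullbacks of \emph{structured spaces}; to extract from it the claim that $-^\alg$ preserves the pushout $\cO_A\coprod_{f^*\cO_B}\cO_A$ of structure sheaves on the fixed space $|A|$, you must identify the structure sheaf of the fiber product $A\times_BA$ with that pushout in $\Str^\loc_{\Man_k}(A)$ on the smooth side and with the corresponding tensor product of $\bcdga_k$-valued sheaves on the algebraic side; this uses that $|A|\times_{|B|}|A|\simeq|A|$ for a closed immersion and is true but not free. Second, the assertion that $\psi$ ``by construction intertwines the $\Sigma^\infty$'s'' on this object is not an independent input: unwinding the adjunctions, it is \emph{equivalent} to the derivation-space comparison you defer to the end, so the argument really rests entirely on that middle equivalence, which in turn is exactly the content of the unramifiedness statement (or of Spivak's Corollary~5.2.7). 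What your approach buys is a self-contained proof sketch that makes visible why the hypothesis ``closed immersion'' is needed, namely that the square preceding the statement commutes precisely on coproducts along effective epimorphisms; what the paper's citation buys is a rigorous treatment of that last equivalence, which your sketch invokes but does not establish.
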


\begin{proof}
We refer the reader to Corollary~5.2.7 of \cite{Sp1} (or to Corollary~1.31 of \cite{Po3} where similar notation is utilized).
\end{proof}

Using this result, many properties of the relative cotangent complex can be transported to the algebraic setting where existing results abound.  One example is the following.

\begin{prop}
Let $f:A\ra B$ be a closed immersion between derived $k$-manifolds such that
\[    \pi_0f:\pi_0A\ra\pi_0B  \]
is an equivalence of sheaves.  Then $f$ is an equivalence if and only if $\bb{L}_{f}$ vanishes.  
\end{prop}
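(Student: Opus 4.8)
The plan is to prove the two implications separately, the forward one being formal and the reverse one requiring a reduction to the algebraic model followed by a Postnikov-tower induction. For the easy direction, suppose $f$ is an equivalence. Then $f^\sharp:f^*\cO_B\ra\cO_A$ is an equivalence of local $k$-manifold structures, so the pushout $\cO_A\coprod_{f^*\cO_B}\cO_A$ is equivalent to $\cO_A$, which is the zero object of $\Str^\loc_{\Man_k}(A)_{\cO_A//\cO_A}$. Applying Proposition~\ref{cca}(1) gives $\bb{L}_f\simeq\Sigma^\infty(\cO_A)\simeq 0$, as desired.

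For the reverse implication I would first reduce to the underlying algebraic model. Recall that $\rho^*$ is conservative, so $f$ is an equivalence if and only if its algebraic model $f^\alg$ is an equivalence of dg-ringed topological spaces. Since $f$ is a closed immersion the underlying map $|A|\ra|B|$ is a homeomorphism onto a closed subset, and the hypothesis that $\pi_0 f$ is an equivalence forces this subset to be all of $|B|$; identifying $|A|=|B|$, it remains to prove that the induced map $\phi:f^*\cO_B^\alg\ra\cO_A^\alg$ of $\bcdga_k$-valued sheaves is an equivalence. Because $f$ is a closed immersion, the preceding proposition identifies $\bb{L}_{f^\alg}\simeq\psi(\bb{L}_f)$, and $\psi$ is an equivalence by Proposition~\ref{modequivalence}; hence $\bb{L}_\phi=0$. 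The hypothesis on $\pi_0 f$ moreover gives that $\pi_0(\phi)$ is an isomorphism of sheaves of ordinary $k$-algebras.

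It then suffices to show that a map $\phi$ of sheaves of coconnective commutative dg-algebras which is an isomorphism on $\pi_0$ and has vanishing relative cotangent complex is an equivalence. Since $\bb{L}_\phi$ and the homotopy sheaves $\pi_i$ are computed stalkwise, I would verify this on stalks, reducing to a map of connective cdgas; as $k$ has characteristic zero these are modeled by simplicial commutative $k$-algebras, so the usual deformation theory applies. The argument is then a Postnikov-tower induction anchored at $\pi_0$: assuming the $n$-truncations agree, the Hurewicz-type connectivity estimate for the relative cotangent complex (see \cite{LXIV}, and \cite{L2}) identifies the first nonvanishing homotopy group of the fiber of $\phi$ with the corresponding homotopy group of $\bb{L}_\phi$, so the vanishing of $\bb{L}_\phi$ propagates the agreement one degree higher. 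As a connective algebra is the limit of its Postnikov tower, $\phi$ is an equivalence on each stalk, hence an equivalence of sheaves, and by conservativity of $\rho^*$ so is $f$.

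The main obstacle is this last induction: one must have the precise connectivity bound relating $\bb{L}_\phi$ to the fiber of $\phi$ and check that it passes to the stalks of sheaves of cdgas. The hypothesis that $\pi_0 f$ is an equivalence is exactly what anchors the induction and rules out the standard counterexamples, such as \'etale maps, which have vanishing relative cotangent complex without being equivalences.
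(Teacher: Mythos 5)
Your proposal is correct and follows essentially the same route as the paper: reduce to the algebraic model via conservativity of $\rho^*$, use the closed-immersion hypothesis to identify $\bb{L}_{f^\alg}$ with $\psi(\bb{L}_f)$, pass to stalks, and conclude from the $\pi_0$-isomorphism together with the vanishing cotangent complex. The only difference is that where you carry out the Postnikov-tower induction by hand, the paper cites Corollary~7.4.3.4 and Proposition~7.1.4.11 of \cite{L2}, which encapsulate exactly that argument.
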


\begin{proof}
Assume $\bb{L}_f$ vanishes.  It is enough to check the condition on stalks.  In this case, for a point $p:*\ra|A|$, the map $p^*(\pi_0f)$ induces an isomophism $\pi_0(p^*(f^*\cO^\alg_B))\ra\pi_0(p^*\cO^\alg_A)$ of commutative $k$-algebras.  Since $\bb{L}_{f^\alg}$ vanishes, by Corollary~7.4.3.4 and Proposition~7.1.4.11 of \cite{L2}, the map $p^*(f^*\cO^\alg_B))\ra p^*\cO^\alg_A$ is an equivalence of coconnective commutative dg-algebras over $k$.  Thus the map $f^\alg$ is an equivalence and since the algebraic model functor is conservative, the map $f$ is an equivalence.  Following this same argument in the opposite direction we obtain the result.
\end{proof}

\section{Shifted preplectic stacks}\label{spds}

In this section we define the notion of $n$-shifted $p$-forms and $n$-shifted $p$-preplectic forms on a derived $k$-smooth stack.  The algebraic theory was introduced in \cite{PTVV} (see also \cite{BZN} and \cite{S1eq} for further background).  A more general approach which can be utilized in other contexts, and which we largely follow here, is contained in \cite{CPTVV}.  

We begin by giving a definition of the space of $n$-shifted $p$-forms and $n$-shifted closed $p$-forms on an affine derived $k$-smooth stack.  We obtain the corresponding definitions on derived $k$-smooth stacks by gluing the structures on affine objects.  

Let $\sM$ be a combinatorial symmetric monoidal model category enriched over $\dg_k$ and satisfying the monoid axiom.  As a result, the model category $\sM$ is tensored and cotensored over $\dg_k$ and we denote by
\[  \epsilon-\sM^{gr}:=\Comod_{k[t,t^{-1}]\otimes_k k[\epsilon]}(\sM)    \]
the symmetric monoidal model category of comodules over the commutative and cocommutative Hopf dg-algebra $k[t,t^{-1}]\otimes_k k[\epsilon]$.  The symmetric monoidal model structure is defined through the forgetful functor $\epsilon-\sM^{gr}\ra\sM^{gr}:=\sM^{\bb{Z}}$.  Given a graded object $F=\oplus_pF(p)$ in $\sM$, we will refer to $F(p)$ as the \textit{weight} $p$-piece of $F$.  

Let $A=(|A|,\cO_A)$ be an affine derived $k$-manifold.  The symmetric monoidal model category $\Sh_{\dg_k}(A)$ of $\dg_k$-valued sheaves on $|A|$ is tensored and cotensored over the category $\dg_k$ of dg-modules over $k$ in the obvious way.  We denote by
\[  \epsilon-\bMod(\cO_A^\alg)^{gr}:=L(\epsilon-\Mod(\cO_A^\alg)^{gr})  \]
the $\infty$-category of graded mixed $\cO_A^\alg$-modules and
\[  \epsilon-\bSh_{\bdg_k}(A)^{gr}:=L(\epsilon-\Sh_{\dg_k}(A)^{gr})    \]
the $\infty$-category of graded mixed sheaves of dg-modules on $|A|$.  We define the $\infty$-category of algebras of graded mixed $\cO_A^\alg$-modules by
\[   \epsilon-\bCAlg(\cO_A^\alg)^{gr}:=L(\CMon(\epsilon-\Mod(\cO_A^\alg)^{gr}))  \]
and the $\infty$-category of algebras of graded mixed sheaves of dg-modules on $|A|$ by
\[   \epsilon-\bSh_{\bcdga_k}(A)^{gr}:=L(\CMon(\epsilon-\Sh_{\dg_k}(A)^{gr})).   \]

Note that if we define the $\infty$-category of graded mixed complexes by the localization 
\[    \epsilon-\bdg_k^{gr}:=L(\epsilon-\dg_k^{gr})   \]
of the symmetric monoidal model category of graded mixed complexes then we obtain a chain of equivalences
\[    \epsilon-\bSh_{\bdg_k}(A)^{gr}\xras L(\Sh_{\epsilon-\dg_k^{gr}}(A))\xras\bSh_{\epsilon-\bdg_k^{gr}}(A)   \]
of $\infty$-categories.  Likewise, we define the $\infty$-category of graded mixed commutative dg-algebras by 
\[      \epsilon-\bcdga_k^{gr}:=L(\CMon(\epsilon-\dg_k^{gr}))     \]
and there exists a chain of equivalences
\[    \epsilon-\bSh_{\bcdga_k}(A)^{gr}\xras L(\CMon(\Sh_{\epsilon-\dg_k^{gr}}(A)))\xras L(\Sh_{\CMon(\epsilon-\dg_k^{gr})}(A))\xras\bSh_{\epsilon-\bcdga_k^{gr}}(A)   \]
of $\infty$-categories.  

Consider the chain of equivalences
\[  \Map(\bb{L}_A,\bb{L}_A)\simeq\Map((\bb{L}_A)^\alg,(\bb{L}_A)^\alg)\simeq\Map(\cO_A,\cO_A\oplus\bb{L}_A)\simeq\Map(\cO_A^\alg,(\cO_A\oplus\bb{L}_A)^\alg)  \]
\[   \simeq\Map(\cO_A^\alg,\cO_A^\alg\oplus(\bb{L}_A)^\alg)\simeq\Map(\cO_A^\alg,(\bb{L}_A)^\alg)  \]
of mapping spaces.  The identity map in $\Map(\bb{L}_A,\bb{L}_A)$ induces the universal derivation $\delta$ in $\Map(\cO_A^\alg,(\bb{L}_A)^\alg)$ from this chain of equivalences.  Identifying $\delta$ with $\epsilon$, then
\[    \delta:\cO^\alg_A\ra (\bb{L}_A)^\alg   \]
is an element in $\epsilon-\bMod(\cO_A^\alg)^{gr}$.  

We apply the mixed graded symmetric algebra functor
\[    \epsilon-\Sym^{gr}:\epsilon-\bMod(\cO_A^\alg)^{gr}\ra\epsilon-\bCAlg(\cO_A^\alg)^{gr}   \]
to the map $\delta$ followed by the forgetful functor
\[  g:\epsilon-\bCAlg(\cO_A^\alg)^{gr}\ra\epsilon-\bSh_{\bcdga}(A)^{gr}  \]
induced from the forgetful functor $f:\epsilon-\Mod(\cO_A^\alg)^{gr}\ra\epsilon-\Sh_{\dg}(A)^{gr}$ to obtain the de~Rham algebra of $A$.

\begin{dfn}\label{derhamalg}
Let $A=(|A|,\cO_A)$ be an affine derived $k$-manifold.  The \textit{de Rham algebra} of $A$ is given by
\[    \cD\cR(A):=g\circ\epsilon-\Sym^{gr}(\delta)  \]  
and is a sheaf of graded mixed dg-algebras over $k$ on $|A|$.
\end{dfn}

Note that the underlying graded object of $\cD\cR(A)$ is $\Sym(\bb{L}_A[1])$ with weight $p$-piece given by $\cD\cR(p)=\wedge^p\bb{L}_A[p]$.  Here, for any $U\subseteq |A|$, $\wedge^p_U$ is the levelwise derived $p$th exterior power of the dg-module $\bb{L}_{A}(U)$ over $k$.  The extra mixed structure 
\[   \delta:\cD\cR(p)\ra\cD\cR(p+1)  \]
will be called the \textit{de~Rham differential}.

This construction of the de Rham algebra is functorial in $A$ as follows.  Consider the functor
\[    \Sh_{\CMon(\epsilon-\dg_k^{gr})}:\Top^\circ\ra\PC(\sSet)   \]
sending $|A|$ to $\Sh_{\CMon(\epsilon-\dg_k^{gr})}(A)$.  We define 
\[    \textbf{gmTop}_k:=\bFun(*,\int_{\Top}\Sh_{\CMon(\epsilon-\dg_k^{gr})})\times_{\bFun(*,\bTop)}\bTop   \]
and consider the functor 
\[     \cD\cR:(\bdAff_k)^\circ\ra\textbf{gmTop}_k    \]
sending $A$ to $\cD\cR(A)$.  

Using the de Rham algebra in Definition~\ref{derhamalg} we can define the spaces of shifted (closed) forms on an affine derived $k$-manifold.

\begin{dfn}\label{pforms}
Let $A=(|A|,\cO_A)$ be a affine derived $k$-manifold, $p\geq 0$ and $n\in\bb{Z}$.  Then
\[    \cF^p(A,n):=\Map_{\bSh_{\bdg_k}(A)}(1[-n],\cD\cR(A)(p))   \] 
is called the space of \textit{$n$-shifted $p$-forms} on the derived $k$-smooth stack $\Spec A$.  The space 
\[   \cF^{p,\cl}(A,n):=\Map_{\epsilon-\bSh_{\bdg_k}(A)^{gr}}(1(p)[-p-n],\cD\cR(A))   \] 
is called the space of \textit{$n$-shifted closed $p$-forms} on the derived $k$-smooth stack $\Spec A$.  
\end{dfn}

We will denote by
\[   \cF^p(-,n):(\bdAff_k)^\circ\ra\textbf{S}  \] 
the presheaf which sends $A$ to the space $\cF^p(A,n)$ of $n$-shifted $p$-forms on $A$ and by
\[   \cF^{p,\cl}(-,n):(\bdAff_k)^\circ\ra\textbf{S}  \] 
the presheaf which sends $A$ to the space $\cF^{p,\cl}(A,n)$ of $n$-shifted closed $p$-forms on $A$.

\begin{prop}\label{formsarestacks}
The presheaves $\cF^p(-,n)$ and $\cF^{p,\cl}(-,n)$ are sheaves on the $\infty$-site of affine derived $k$-manifolds with respect to the \'etale topology.
\end{prop}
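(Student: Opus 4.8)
The plan is to reduce the sheaf condition for both presheaves to \'etale descent for the de~Rham construction, and then to invoke the general principle that mapping spaces out of a fixed object into a descent-compatible family of targets compute a sheaf. Fix an affine derived $k$-manifold $A=(|A|,\cO_A)$ and an \'etale covering $\{B_i\ra A\}$ with \v{C}ech nerve $B_\bullet$. By topological descent for sheaves of complexes along the local homeomorphisms underlying the $B_i\ra A$ (and by Proposition~\ref{modisasheaf} on the module level, which controls the cotangent complex below) the assignments $A\mapsto\bSh_{\bdg_k}(A)$ and $A\mapsto\epsilon-\bSh_{\bdg_k}(A)^{gr}$ are sheaves of $\infty$-categories for the \'etale topology, so that $\bSh_{\bdg_k}(A)\xras\lim_\Delta\bSh_{\bdg_k}(B_\bullet)$ and likewise in the graded mixed case.

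First I would establish that the weight pieces $A\mapsto\cD\cR(A)(p)$ assemble into a Cartesian section of $A\mapsto\bSh_{\bdg_k}(A)$, i.e. that for each \'etale $u:B\ra A$ there is a natural equivalence $u^*\cD\cR(A)(p)\xras\cD\cR(B)(p)$. Since $u$ is \'etale the map $u^*\cO_A\ra\cO_B$ is an equivalence, so the relative cotangent complex $\bb{L}_u$ vanishes; the cofiber sequence of Proposition~\ref{cca} then yields $u^*\bb{L}_A\xras\bb{L}_B$. As $\cD\cR(A)(p)=\wedge^p\bb{L}_A[p]$ and the pullback $u^*$ is symmetric monoidal (hence commutes with derived exterior powers and shifts), this produces the required equivalence $u^*\cD\cR(A)(p)\simeq\cD\cR(B)(p)$. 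The unit $1[-n]$ is a Cartesian section as well, its pullback being again the shifted unit.

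The open case then follows from the general fact that, for a sheaf of $\infty$-categories $\cC$ and two Cartesian sections $X,Y$, the presheaf $A\mapsto\Map_{\cC(A)}(X_A,Y_A)$ is a sheaf: under $\cC(A)\xras\lim_\Delta\cC(B_\bullet)$ the sections correspond to the compatible families $\{X_{B_\bullet}\},\{Y_{B_\bullet}\}$, and mapping spaces commute with limits of $\infty$-categories, so
\[ \cF^p(A,n)=\Map_{\bSh_{\bdg_k}(A)}(1[-n],\cD\cR(A)(p))\xras\lim_\Delta\Map_{\bSh_{\bdg_k}(B_\bullet)}(1[-n],\cD\cR(B_\bullet)(p))=\lim_\Delta\cF^p(B_\bullet,n), \]
which is precisely the sheaf condition for $\cF^p(-,n)$.

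For the closed forms the argument is identical once one knows that the \emph{entire} graded mixed algebra $A\mapsto\cD\cR(A)$, and not merely its weight pieces, is a Cartesian section of $A\mapsto\epsilon-\bSh_{\bdg_k}(A)^{gr}$; the source $1(p)[-p-n]$ is again manifestly Cartesian. This is where the main work lies: one must verify that the de~Rham differential $\delta$ of Definition~\ref{derhamalg} is compatible with \'etale pullback. This should follow by tracing the functoriality of the construction, since the universal derivation $\delta:\cO_A^\alg\ra(\bb{L}_A)^\alg$ and the mixed graded symmetric algebra functor $\epsilon-\Sym^{gr}$ are natural in $A$ and are matched under $u^*$ with the corresponding data for $B$ by the cotangent complex base change above, giving $u^*\cD\cR(A)\xras\cD\cR(B)$ as graded mixed dg-algebras. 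Granting this, the same mapping-space-into-a-limit computation gives $\cF^{p,\cl}(A,n)\xras\lim_\Delta\cF^{p,\cl}(B_\bullet,n)$. The principal obstacle is thus not the descent of the underlying graded object, which is controlled cleanly by Proposition~\ref{cca}, but the verification that the mixed structure $\delta$ descends; once $\cD\cR(-)$ is exhibited as a Cartesian section, both statements are formal consequences of descent for the ambient sheaves of $\infty$-categories.
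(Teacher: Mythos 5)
Your proof is correct and follows essentially the same route as the paper: both reduce the statement to \'etale descent for the de~Rham construction and conclude by invoking Proposition~\ref{modisasheaf}. Your version is simply more detailed --- in particular you verify the Cartesian-section property via the vanishing of $\bb{L}_u$ for \'etale $u$ and the base-change statement of Proposition~\ref{cca}, and you explicitly flag the descent of the mixed structure $\delta$, points which the paper's two-line proof leaves implicit.
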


\begin{proof}
By definition, it will suffice to prove that the functor
\[   \cD\cR(p):(\bdAff_k)^\circ\ra\textbf{gmTop}_k   \]
sending $A$ to $\cD\cR(A)(p)$ satisfies \'etale descent.  It suffices to consider this functor taking values in the $\infty$-category $\textbf{dgTop}_k$, or simply $\int_{\bdAff_k}\cM^\alg$ where $\cM^\alg:=\cM\circ-^\alg$, whose objects are pairs $(A,M_A)$ consisting of an affine derived $k$-manifold together with a module $M_A$ in $\bMod(\cO_A^\alg)$.  The result now follows from Proposition~\ref{modisasheaf}.
\end{proof}

In view of Proposition~\ref{formsarestacks} we make the following definition. 

\begin{dfn}
Let $X$ be a derived $k$-smooth stack.  Then
\[   \cF^p(X,n):=\Map_{\dSmSt_k}(X,\cF^p(-,n))  \] 
is called the space of \textit{$n$-shifted $p$-forms} on $X$ and 
\[  \cF^{p,\cl}(X,n):=\Map_{\dSmSt_k}(X,\cF^{p,\cl}(-,n))   \] 
the space of \textit{$n$-shifted closed $p$-forms} on $X$.
\end{dfn}

Consider the functor 
\[  |-|:\epsilon-\bSh_{\bdg_k}(A)^{gr}\ra\epsilon-\bdg_k^{gr}  \] 
sending a mixed graded object $F$ to $|F|:=\Mor(1,F)$.  Here $\Mor$ is the natural morphism object using the $\epsilon-\bdg_k^{gr}$ enrichment of $\epsilon-\bSh_{\bdg_k}(A)^{gr}$.  Following \cite{CPTVV} we call this the realization functor.  Composition of $|-|$ with the functor 
\[  \prod_{p\leq i\leq q}(-)(i):\epsilon-\bdg_k^{gr}\ra\bdg_k  \] 
sending a graded mixed commutative dg-algebra $\cA$ to the dg-module $\prod_{p\leq i\leq q}\cA(i)$ endowed with the total differential (consisting of a sum of the mixed differential and the internal differential) will be denoted
\[   \tu{tot}_{[p,q]}:\epsilon-\bSh_{\bcdga_k}(A)^{gr}\ra\bdg_k.   \]
Likewise, $\tu{tot}_{[p,q)}$ (resp. $\tu{tot}_{(p,q]}$ and $\tu{tot}_{(p,q)}$) will denote the functors on the half closed intervals where the product above runs over $p\leq i<q$ (resp. $p<i\leq q$ and $p<i<q$).

Let $A$ be an affine derived $k$-manifold.  The derived $k$-smooth prestack $\DR_{\geq p}[n]$ sending $A$ to
\[  \DR_{\geq p}(A)[n]:=\Map_{\bdg_k}(k[-n],\tu{tot}_{[p,\infty)}(\cD\cR(A)))  \]
with total differential consisting of the de~Rham differential $\delta$ and internal differential $\tu{d}$ is a derived $k$-smooth stack from Proposition~\ref{formsarestacks}.  

\begin{lem}\label{fpcldr}
There exists an equivalence
\[    \cF^{p,\cl}(-,n)\ra\DR_{\geq p}[n]   \]
of derived $k$-smooth stacks.
\end{lem}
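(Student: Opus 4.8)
The plan is to unwind both sides of the claimed equivalence to their defining mapping spaces and exhibit a chain of natural equivalences between them, working first at the level of affine derived $k$-manifolds and then invoking the sheaf property established in Proposition~\ref{formsarestacks}. Since both $\cF^{p,\cl}(-,n)$ and $\DR_{\geq p}[n]$ are derived $k$-smooth stacks (the former by Proposition~\ref{formsarestacks}, the latter by construction), it suffices to produce a natural equivalence of the presheaves they restrict to on $(\bdAff_k)^\circ$; a morphism of presheaves inducing a levelwise equivalence on affines automatically glues to an equivalence of the associated stacks.

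First I would write out the left-hand side. By Definition~\ref{pforms}, for an affine derived $k$-manifold $A$ we have
\[
\cF^{p,\cl}(A,n)=\Map_{\epsilon-\bSh_{\bdg_k}(A)^{gr}}\!\bigl(1(p)[-p-n],\cD\cR(A)\bigr).
\]
The key observation is that mapping out of the free graded mixed object $1(p)[-p-n]$ in the graded mixed category computes precisely the totalization of $\cD\cR(A)$ over the weights $\geq p$, with the total differential assembled from the internal differential $\tu{d}$ and the de~Rham (mixed) differential $\delta$. Concretely, a map from $1(p)$ in the graded category selects the weight-$\geq p$ part, and equipping the source with its mixed structure forces compatibility with $\delta$, which is exactly the passage from the naive graded product to the totalization $\tu{tot}_{[p,\infty)}(\cD\cR(A))$. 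I would make this precise using the adjunction between the forgetful functor $\epsilon-\bdg_k^{gr}\ra\bdg_k^{gr}$ and the cofree mixed structure, together with the realization functor $|-|$ and the identification of $\Map$ in the mixed category with $\Map_{\bdg_k}$ out of $k[-n]$ into the totalization; this yields
\[
\cF^{p,\cl}(A,n)\simeq\Map_{\bdg_k}\!\bigl(k[-n],\tu{tot}_{[p,\infty)}(\cD\cR(A))\bigr)=\DR_{\geq p}(A)[n].
\]

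The heart of the argument is thus the identity $\Map_{\epsilon-\bdg_k^{gr}}(1(p)[-p-n],F)\simeq\Map_{\bdg_k}(k[-n],\tu{tot}_{[p,\infty)}(F))$ for a graded mixed complex $F$, which is a formal but essential manipulation of the Hopf-algebra comodule structure over $k[t,t^{-1}]\otimes_k k[\epsilon]$: mapping out of the generator in weight $p$ picks up all weights $\geq p$ because of the $\epsilon$-action raising weight by one, and the degree shift $[-p-n]$ together with $[p]$ in the definition $\cD\cR(p)=\wedge^p\bb{L}_A[p]$ accounts for the single overall shift $[-n]$ on the totalization side. I expect this comodule computation to be the main obstacle, since one must track the interaction of the weight grading, the cohomological shift, and the mixed differential carefully enough to see that the cofree object $1(p)[-p-n]$ corepresents exactly the closed-form totalization and not a truncated or differently shifted variant. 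Once this is settled levelwise and checked to be natural in $A$ (which follows from the functoriality of $\cD\cR$ established after Definition~\ref{derhamalg} and the naturality of all the adjunction units involved), the equivalence of stacks follows by descent.
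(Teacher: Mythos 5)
Your proposal is correct and follows essentially the same route as the paper: both reduce to identifying, via the realization functor $|-|=\bb{R}\uHom(1,-)$ and its adjunction, the mapping space $\Map_{\epsilon-\bdg_k^{gr}}(k(p)[-p-n],|\cD\cR(A)|)$ with $\Map_{\bdg_k}(k[-n],\tu{tot}_{[p,\infty)}(\cD\cR(A)))$, and then conclude by functoriality in $A$. The only difference is that the paper outsources the key comodule computation (that mapping out of the weight-$p$ generator produces the totalization over weights $\geq p$ with total differential $\tu{d}+\delta$) to Section~1.4 of \cite{CPTVV}, whereas you propose to carry it out directly; your sketch of that computation is the standard and correct one.
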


\begin{proof}
The realization functor is explicitly given as follows.  Using the Quillen adjunction
\[   -\otimes 1:\dg_k\rightleftarrows\epsilon-\Sh_{\dg_k}(A)^{gr} :\uHom(1,-) \]
of model categories, the realization functor is simply $\bb{R}\uHom(1,-)$.  Therefore, there exists an equivalence
\[    \cF^{p,\cl}(A,n)\xras\Map_{\epsilon-\bdg_k^{gr}}(k(p)[-p-n],|\cD\cR(A)|)   \]
of spaces.  The mapping space on the right hand side is then computed in Section~1.4 of \cite{CPTVV} to coincide with $\DR_{\geq p}(A)[n]$.  By functoriality, the result follows.
\end{proof}

The truncation of the complex $\DR_{\geq p}(A)[n]$ at $p$ induces a map 
\[     k:\cF^{p,\cl}(A,n)\ra\cF^p(A,n)      \]
of spaces.  The map $\pi_0k$ is surjective but not injective and so a number of different $n$-shifted closed $p$-forms may have the same underlying $n$-shifted $p$-form.  

Let $X$ be a derived $k$-smooth stack.  Using Lemma~\ref{fpcldr} one may define the space of $n$-shifted closed $p$-forms on $X$ by
\[   \DR_{\geq p}(X):=\Map_{\dSmSt_k}(X,\DR_{\geq p})   \]
and an inclusion
\[    i:\DR_{\geq q}(X)\ra\DR_{\geq p}(X)   \]
for any $q>p$.

\begin{ex}\label{preplecman}
To confirm to the reader that our notion of $n$-shifted $p$-preplectic derived $k$-smooth Artin stack is reasonable, consider the simple example when $M$ is a smooth manifold over $k$.  Let 
\[  A=\Spec^{\Man_k}(M).  \]
By Lemma~\ref{usual}, the cotangent complex $\bb{L}_A$ is simply the sheaf $\Omega_M^1$ of one-forms on $M$ and we have an equivalence
\[   \cF^p(A,n)\ra |\Gamma(A,\Omega_A^p)[n]|  \]
of spaces.  Furthermore, a short calculation determines the space of $n$-shifted closed $p$-forms on $A$ to be
\[   \cF^{p,\cl}(A,n)^{cl}\simeq\Map(k,\Omega _M^{\geq p}[n])  \]
where $\Omega_M^{\geq p}=\{\Omega_M^p\ra\Omega_M^{p+1}\ra\ldots\}$ is the truncated de Rham complex with $\Omega_M^p$ in degree zero.  

Looking at the homotopy groups of the space of $n$-shifted closed $p$-forms on $A$ we have that $\pi_n(\cF^{p,\cl}(A,n))$ is equivalent to the usual set $\Gamma(M,\Omega_M^{p,\cl})$ of closed $p$-forms on $A$.  In particular, a $0$-shifted closed $p$-form on $A$ in our sense is a usual closed $p$-form on $M$.  For $0\leq i\leq n-1$, $\pi_i(\cF^p(A,n))$ is equivalent to the de Rham cohomology group $H_{dR}^{p+n-i}(M)$ .  The space is empty for $n<0$.  
\end{ex}

\section{The derived Weil-Kostant integrality theorem}\label{dwkit}

In this section we state and prove the Weil-Kostant theorem in the derived context.  For the related theorem in the non-derived setting see \cite{Ga} which extends work of \cite{Br}.  We first define what it means for a $n$-shifted complex closed $p$-form on a derived smooth Artin stack to be integral.  

To simplify the presentation, we will call a (affine) derived $\bb{R}$-smooth manifold a \textit{(affine) derived manifold} and a (affine) derived $\bb{C}$-smooth manifold a \textit{(affine) derived complex manifold}.  Similarly, a derived $\bb{R}$-smooth (Artin) stack will be called a \textit{derived smooth (Artin) stack} and a derived $\bb{C}$-smooth (Artin) stack a \textit{derived complex (Artin) stack}.

Let $A=(|A|,\cO_A)$ be an affine derived manifold, $p\geq 0$ and $n\in\bb{Z}$.  Then we denote by 
\[    \cF^p_{\bb{C}}(A,n):=\Map_{\bSh_{\bdg_{\bb{C}}}(A)}(1[-n],(\cD\cR(A)\otimes\bb{C})(p))   \] 
the space of \textit{complex valued $n$-shifted $p$-forms} on the derived smooth stack $\Spec A$.  Similarly, the space 
\[   \cF^{p,\cl}_{\bb{C}}(A,n):=\Map_{\epsilon-\bSh_{\bdg_{\bb{C}}}(A)^{gr}}(1(p)[-p-n],\cD\cR(A)\otimes\bb{C})   \] 
is called the space of \textit{complex valued $n$-shifted closed $p$-forms} on $\Spec A$.  

Let $X$ be derived smooth stack.  All the results of Section~\ref{spds} carry over and therefore we have the stacks $\cF^p_{\bb{C}}(-,n)$ and $\cF^{p,\cl}_{\bb{C}}(-,n)$ on the $\infty$-site of affine derived manifolds with respect to the \'etale topology and thus the space
\[   \cF^p_{\bb{C}}(X,n):=\Map_{\dSmSt_{\bb{R}}}(X,\cF_{\bb{C}}^p(-,n))  \] 
of \textit{complex valued $n$-shifted $p$-forms} on $X$ and the space
\[  \cF^{p,\cl}_{\bb{C}}(X,n):=\Map_{\dSmSt_{\bb{R}}}(X,\cF^{p,\cl}_{\bb{C}}(-,n))   \] 
of \textit{complex valued $n$-shifted closed $p$-forms} on $X$.  

\begin{dfn}
A derived smooth Artin stack endowed with an $n$-shifted complex closed $(p+1)$-form will be called a \textit{$n$-shifted $p$-preplectic derived smooth Artin stack}. 
\end{dfn}
 
For any $p\leq q$, consider the truncated $n$-shifted complexified de Rham stack
\[     \DR_{[p,q]}^{\bb{C}}[n]:(\bdAff_\bb{R})^\circ\ra\textbf{S}   \]
sending $A$ to 
\[  \DR_{[p,q]}^{\bb{C}}(A)[n]:=\Map_{\bdg_\bb{C}}(\bb{C}[-n],\tu{tot}_{[p,q]}(\cD\cR(A))\otimes\bb{C}).  \] 
Similarly, we write $\DR_{\geq p}^{\bb{C}}[n]$ for the stack indexed by the half closed interval $[p,\infty)$ and by $\DR_{<p}^{\bb{C}}[n]$ for the stack indexed by the open interval $(\infty,p)$.  

A complex valued closed 2-form on a smooth manifold has integral periods if it lies in an integral cohomology class.  The generalization to complex valued closed 2-forms on a derived smooth stack is given by the fiber product of the inclusion $i:\DR_{\geq 2}^{\bb{C}}(X)[2]\ra\DR^{\bb{C}}(X)[2]$ along the constant functions map $2\pi i$.  More generally, we have the following~:

\begin{dfn}
Let $X$ be a derived smooth stack.  The space of \textit{integral} $n$-shifted closed $p$-forms on $X$ is given by the pullback
\begin{diagram}
\cF_{\bb{C}}^{p,\cl,\tu{in}}(X,n)  &\rTo  &\DR_{\geq p}^{\bb{C}}(X)[p+n] \\
\dTo     &        &\dTo_i\\
\bb{Z}[p+n]  &\rTo^{2\pi i}  &\DR^{\bb{C}}(X)[p+n]
\end{diagram}
of spaces.  
\end{dfn}

We will refer to a derived smooth stack endowed with a integral $n$-shifted complex closed $(p+1)$-form as an \textit{integral $n$-shifted $p$-preplectic} derived smooth stack.  

The prestack 
\[  \cF_{\bb{C}}^{p,\cl,\tu{in}}(-,n):(\bdAff_{\bb{R}})^\circ\ra\textbf{S}  \]
sending an affine derived manifold $A$ to $\cF^{p,\cl,\tu{in}}_{\bb{C}}(A,n)$ is a stack on the $\infty$-site of affine derived manifolds with respect to the \'etale topology.  There exists an equivalence 
\[     \cF_{\bb{C}}^{p,\cl,\tu{in}}(X,n)\ra\Map_{\dSmSt_{\bb{R}}}(X,\cF_{\bb{C}}^{p,\cl,\tu{in}}(-,n))  \]
of spaces.  When $n<0$, a $n$-shifted $p$-preplectic form on a derived smooth stack is automatically integral.

Let $A=(|A|,\cO_A)$ be an affine derived manifold.  The exponential map $\exp:\bb{C}\ra\bb{C}^*$ induces a map
\[     \log:\cO_A^*\ra\cO_A^\alg\otimes\bb{C}   \]
where $\cO_A^*$ is the sheaf of complex valued invertible elements on $|A|$.  We have an equivalence
\[    |\cO^*_A(|A|)|\ra\cO_A(\bb{C}^*)(|A|)  \]
of spaces where we have abused notation by also using $|-|$ on the left hand side to denote the result of applying the Dold-Kan functor to the $0$-truncation of the complex.

We define a stack
\[    \cO^*:(\bdAff_{\bb{R}})^\circ\ra\textbf{S}  \]
sending $A$ to $\Map_{\bSh_{\bcdga_{\bb{C}}}(A)}(1,\cO_A^*)$ and consider the map 
\[    \delta\log_{[1,p]}:\cO^*\ra\DR_{[1,p]}^{\bb{C}}[1]   \]
of stacks sending $\cO^*$ to the shifted $[1,p]$ truncation.  We make the following definition for the cofiber of the corresponding $n$-shifted map
\[   \delta\log_{[1,p]}[n]:\cO^*[n]\ra\tu{DR}_{[1,p]}^{\bb{C}}[1+n]   \] 
of derived smooth stacks.

\begin{dfn}\label{kpgerbe}
Let $A$ be an affine derived manifold and $n\in\bb{Z}$, $p>0$.  We call 
\[ \Ger^{(n,p)}(A):=\tu{cofib}(\delta\log_{[1,p]}(A)[n])    \] 
the space of \textit{$(n,p)$-gerbes} on $A$.  An $(n,p)$-gerbe will also be called a $n$-gerbe with \textit{$p$-connection data}.
\end{dfn}

\begin{rmk}
The terminology of $p$-connection \textit{data} comes from the fact that for gerbes, one can have a collection of $i$-forms up to level $i=p$.  Note also that Definition~\ref{kpgerbe} still makes sense for $n<0$ giving meaning to (flat) $n$-gerbes with connection for any $n\in\bb{Z}$.
\end{rmk}

The construction of the space of $(n,p)$-gerbes is functorial in $A$ and defines a prestack
\[   \Ger^{(n,p)}:(\bdAff_\bb{R})^\circ\ra\textbf{S}     \]
on the $\infty$-category of affine derived manifolds.

\begin{prop}\label{stackofgerbes}
The prestack $\Ger^{(n,p)}$ is a stack on the $\infty$-site of affine derived manifolds with respect to the \'etale topology.
\end{prop}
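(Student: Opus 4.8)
The plan is to exhibit $\Ger^{(n,p)}$ as a cofiber formed inside a \emph{stable} $\infty$-category of sheaves, where cofibers coincide with shifted fibers and therefore commute with the limits encoding descent. First I would record that both the source and the target of $\delta\log_{[1,p]}[n]$ are already stacks. The target $\DR_{[1,p]}^{\bb{C}}[1+n]$ is a stack by Proposition~\ref{formsarestacks} together with Lemma~\ref{fpcldr}: restriction to the finite window $[1,p]$ and the shift by $1+n$ are reindexing and suspension operations internal to the stable $\infty$-category of sheaves of complexes, and hence preserve descent. For the source $\cO^*[n]$, the functor $A\mapsto\Map_{\bSh_{\bcdga_{\bb{C}}}(A)}(1,\cO_A^*)$ is a stack because $\cO_A^*$ is the sheaf of invertible elements of the structure sheaf and descent for the associated mapping object follows from subcanonicity of the \'etale topology (Proposition~\ref{subcanonical}); the shift $[n]$ again preserves the stack property.

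The essential observation is that $\delta\log_{[1,p]}[n]$ is a morphism not merely of space-valued prestacks but of $\bdg_{\bb{C}}$-valued presheaves, and the bracket notation $[n]$ is precisely the shift in this stable setting. I would therefore work throughout in $\bSh_{\bdg_{\bb{C}}}(\bdAff_{\bb{R}},et)$, the $\infty$-category of sheaves of complexes, which is a stable presentable $\infty$-category realized as an exact reflective localization of the stable presheaf category $\bPr_{\bdg_{\bb{C}}}(\bdAff_{\bb{R}})$. By definition $\Ger^{(n,p)}$ is the objectwise (i.e. presheaf-level) cofiber of $\delta\log_{[1,p]}[n]$ computed in this presheaf category.

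The key step is then to show that this objectwise cofiber is already a sheaf. In any stable $\infty$-category a cofiber sequence is a fiber sequence, so the cofiber of a map is the suspension of its fiber; since the inclusion $\bSh_{\bdg_{\bb{C}}}(\bdAff_{\bb{R}},et)\hookrightarrow\bPr_{\bdg_{\bb{C}}}(\bdAff_{\bb{R}})$ is a right adjoint it preserves fibers, and by stability it consequently preserves cofiber sequences. Thus the presheaf cofiber of a map between the sheaves $\cO^*[n]$ and $\DR_{[1,p]}^{\bb{C}}[1+n]$ agrees with the cofiber formed in $\bSh_{\bdg_{\bb{C}}}(\bdAff_{\bb{R}},et)$, which is a sheaf. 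Concretely, for the \v{C}ech nerve $A_\bullet\ra A$ of an \'etale covering one checks
\[  \Ger^{(n,p)}(A)\simeq\tu{cofib}\big(\lim_\Delta\cO^*(A_\bullet)[n]\ra\lim_\Delta\DR_{[1,p]}^{\bb{C}}(A_\bullet)[1+n]\big)\simeq\lim_\Delta\Ger^{(n,p)}(A_\bullet),  \]
the last equivalence using that the cofiber, being a shifted fiber and hence a finite limit, commutes with the totalization $\lim_\Delta$.

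I expect the main obstacle to be exactly this interchange of the cofiber with the descent limit, and the care it requires lies in insisting that the whole construction take place in the stable category of sheaves of complexes (equivalently, sheaves of spectra) rather than in spaces. Were one to form the cofiber of the map of underlying spaces, it would not be computed objectwise and descent could genuinely fail; stability is precisely what guarantees that the finite colimit defining $\Ger^{(n,p)}$ is simultaneously a finite limit, and so is transparent to the limit expressing the sheaf condition.
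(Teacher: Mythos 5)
Your proposal is correct and follows essentially the same route as the paper: both arguments reduce the claim to the fact that the source and target of $\delta\log_{[1,p]}[n]$ are stacks (the target via Proposition~\ref{formsarestacks}) and that the cofiber of a map of stacks is again a stack. The paper compresses the second step into the remark that $\dSmSt_{\bb{R}}$ is closed under colimits; you make the genuinely relevant point explicit, namely that the \emph{objectwise} cofiber needs no sheafification because, in the stable setting of sheaves of complexes, a cofiber is a shifted fiber and hence a finite limit commuting with the totalization $\lim_\Delta$ --- a worthwhile clarification, since closure under colimits alone would only give the sheafified cofiber. The only soft spot is your justification that $\cO^*[n]$ is a stack: subcanonicity concerns representable presheaves, whereas descent for $\cO^*$ is more directly obtained from Proposition~\ref{modisasheaf} (descent for the structure sheaf) together with the locality of invertibility; this is a minor repair, not a gap in the strategy.
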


\begin{proof}
We know from Proposition~\ref{formsarestacks} that $\DR\otimes\bb{C}$ is a stack and therefore $\DR_{[p,q]}\otimes\bb{C}$ is a stack.  The result now follows from the fact that the $\infty$-category $\dSmSt_{\bb{R}}$ of derived smooth stacks is closed under colimits.
\end{proof}

We will now introduce the derived smooth stack of flat $(n,p)$-gerbes.  Informally, the complex underlying the space of flat $(n,p)$-gerbes on an affine derived manifold $A$ corresponds to the space of $(n,p)$-gerbes on $A$ whose top forms (in degree $p$) are closed.  Note there is a natural map
\[    \Ger^{(n,p)}(A)\ra\cF_{\bb{C}}^p(A,n)   \]
projecting the complex underlying $\Ger^{(n,p)}(A)$ to its $p$-th factor.

\begin{dfn}\label{flatkpgerbe}
Let $A$ be an affine derived manifold.  The pullback 
\[     \o{\Ger}^{(n,p)}(A):= \Ger^{(n,p)}(A)\times_{\cF_{\bb{C}}^p(A,n)} \cF_{\bb{C}}^{p,\cl}(A,n)  \]     
will be called the space of \textit{flat $(n,p)$-gerbes} on $A$.  A flat $(n,p)$-gerbe will also be called a flat $n$-gerbe with $p$-connection.
\end{dfn}

Again, the construction of the space of flat $(n,p)$-gerbes is functorial in $A$ and so defines a stack
\[   \o\Ger^{(n,p)}:(\bdAff_\bb{R})^\circ\ra\textbf{S}     \]
on the $\infty$-category of affine derived manifolds with respect to the \'etale topology.  We use this property together with Proposition~\ref{stackofgerbes} as follows to obtain a global definition of a $(n,p)$-gerbe on any derived smooth stack.

\begin{dfn}\label{gerbeonstack}
Let $X$ be a derived smooth stack.  The space of \textit{$(n,p)$-gerbes} on $X$ is given by
\[    \Ger^{(n,p)}(X):=\Map_{\dSmSt_\bb{R}}(X,\Ger^{(n,p)}).  \]
The space of \textit{flat $(n,p)$-gerbes} on $X$ is given by
\[    \o{\Ger}^{(n,p)}(X):=\Map_{\dSmSt_\bb{R}}(X,\o{\Ger}^{(n,p)}).  \]
\end{dfn}

\begin{ex}
Let $A$ be an affine derived manifold and consider the case $n=0$.  A complex line bundle on $A$ is a sheaf $F$ in $\bMod(\cO_A^\alg\otimes\bb{C})$ which is locally equivalent to $\cO_A^*$.  In Section~\ref{cg} we will show that $F$ is equivalent to a $0$-gerbe (without connection) in the sense of Definition~\ref{kpgerbe}.  Therefore a $(0,1)$-gerbe is simply a complex line bundle with connection on $A$.   A flat $(0,1)$-gerbe is a line bundle with flat connection.  A similar statement can be made for $(n,p)$-gerbes in general.
\end{ex}

The following is the first main result of this paper, the derived Weil-Kostant integrality theorem.

\begin{thm}\label{mainthm}
Let $p\geq 1$, $n\in\bb{Z}$ and $(p+n)>0$.  Let $(X,\omega)$ be a $n$-shifted $p$-preplectic derived smooth Artin stack.  
\begin{enumerate}
\item There exists a $(p+n-1,p)$-gerbe on $X$ with curvature $\omega$ if and only if $\omega$ is integral.
\item The space of $(p+n-1,p)$-gerbes on $X$ with curvature $\omega'$ is parametrized by the space of flat $(p+n-1,p)$-gerbes.
\end{enumerate}
\end{thm}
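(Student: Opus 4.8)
The plan is to produce, at the level of stacks on the site $(\bdAff_{\bb{R}},et)$, a single fiber sequence of grouplike abelian stacks
\[ \o{\Ger}^{(p+n-1,p)}\ra\Ger^{(p+n-1,p)}\xra{\tu{curv}}\cF_{\bb{C}}^{p+1,\cl,\tu{in}}(-,n), \]
and then to apply $\Map_{\dSmSt_{\bb{R}}}(X,-)$, which preserves limits, so that both assertions can be read off from the resulting fiber sequence of spaces (this is where we use that $X$ is Artin, so that the forms, and hence all four stacks, are defined). I would first build the curvature map. By Definition~\ref{kpgerbe}, $\Ger^{(p+n-1,p)}=\tu{cofib}(\cO^{*}[p+n-1]\xra{\delta\log_{[1,p]}}\DR_{[1,p]}^{\bb{C}}[p+n])$, and the filtration of the de~Rham complex by form degree supplies a cofiber sequence
\[ \DR_{\geq p+1}^{\bb{C}}\ra\DR_{\geq 1}^{\bb{C}}\xra{q}\DR_{[1,p]}^{\bb{C}}\xra{\partial}\DR_{\geq p+1}^{\bb{C}}[1] \]
whose boundary $\partial$ differentiates the top $p$-form into a closed $(p+1)$-form. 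Since $\delta\log_{[1,p]}$ is induced by the de~Rham differential on the quotient $\cO^{*}\simeq(\cO^\alg\otimes\bb{C})/\bb{Z}(1)$ furnished by the exponential map, $d\log f$ is already a closed $1$-form; thus $\delta\log_{[1,p]}$ factors through $q$, whence $\partial\circ\delta\log_{[1,p]}\simeq 0$ and $\partial$ descends to the cofiber. After the shift by $[p+n-1]$ this produces a map $\tu{curv}\colon\Ger^{(p+n-1,p)}\ra\cF_{\bb{C}}^{p+1,\cl}(-,n)$, using the identification of Lemma~\ref{fpcldr}.

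Next I would refine the target and identify the fiber. The lattice $\bb{Z}(1)=2\pi i\,\bb{Z}$, arising as the fiber of the exponential sequence $\bb{Z}(1)\ra\cO^\alg\otimes\bb{C}\ra\cO^{*}$, provides a canonical integral class together with a homotopy identifying its image in $\DR^{\bb{C}}$ with the de~Rham class of $\tu{curv}$; unwinding the pullback defining $\cF_{\bb{C}}^{p+1,\cl,\tu{in}}(-,n)$, this is exactly the data of a lift $\tu{curv}\colon\Ger^{(p+n-1,p)}\ra\cF_{\bb{C}}^{p+1,\cl,\tu{in}}(-,n)$. To compute its fiber I would assemble the three cofiber sequences---the exponential sequence for $\cO^{*}$, the de~Rham filtration, and the defining sequence of $\Ger^{(p+n-1,p)}$---into one $3\times 3$ diagram and read off the fiber sequence displayed above. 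Over the zero section the fiber is the locus of gerbes whose top $p$-form is closed; since closed forms are the fiber of the de~Rham differential (the cofiber sequence $\DR_{\geq p+1}^{\bb{C}}\ra\DR_{\geq p}^{\bb{C}}\ra\cD\cR(p)$ exhibits $\DR_{\geq p}^{\bb{C}}$ as $\tu{fib}(\cD\cR(p)\ra\DR_{\geq p+1}^{\bb{C}}[1])$), this locus is precisely $\o{\Ger}^{(p+n-1,p)}$ of Definition~\ref{flatkpgerbe}. That all four stacks are sheaves, by Propositions~\ref{formsarestacks} and~\ref{stackofgerbes}, guarantees the sequence is a genuine fiber sequence of stacks.

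Applying $\Map_{\dSmSt_{\bb{R}}}(X,-)$ then yields a fiber sequence of spaces from which both claims follow. For part (2), the fibre of $\tu{curv}$ over a form $\omega'$ is, by the fiber-sequence structure and the fact that every stack in sight is grouplike, a torsor under the fibre over the zero section, namely the space $\o{\Ger}^{(p+n-1,p)}(X)$ of flat $(p+n-1,p)$-gerbes; this is the asserted parametrization. For part (1), $\omega$ is the curvature of some $(p+n-1,p)$-gerbe exactly when it lifts along $\tu{curv}$, and since $\tu{curv}$ surjects onto $\cF_{\bb{C}}^{p+1,\cl,\tu{in}}(X,n)$ its image is precisely the integral closed $(p+1)$-forms, so such a lift exists if and only if $\omega$ is integral.

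The main obstacle is the surjectivity of $\tu{curv}$ onto the integral closed forms---the derived avatar of the classical existence of a bundle (or gerbe) realizing a prescribed integral curvature. In the fiber sequence this amounts to the vanishing of the connecting map $\pi_{0}\cF_{\bb{C}}^{p+1,\cl,\tu{in}}(X,n)\ra\pi_{-1}\o{\Ger}^{(p+n-1,p)}(X)$, an obstruction living in a negative-degree homotopy group of the flat gerbes. This is exactly where the hypothesis $(p+n)>0$ enters: it forces the gerbe degree $p+n-1$ to be non-negative, so that after the shift by $[p+n-1]$ the truncated de~Rham complex underlying the flat gerbes is connective enough for this group to vanish. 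The descent properties established earlier are what let this connectivity computation, carried out on affines via the exponential sequence, be promoted to the conclusion over an arbitrary derived smooth Artin stack $X$.
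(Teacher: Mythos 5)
Your overall strategy---assembling the exponential sequence, the de~Rham filtration and the defining cofiber sequence of $\Ger^{(p+n-1,p)}$ into a fiber sequence of grouplike stacks and then applying $\Map_{\dSmSt_{\bb{R}}}(X,-)$---is close in spirit to the paper's, but the identification of the fiber is wrong, and the step you yourself flag as the main obstacle is resolved by an argument that does not work. The fiber of the curvature map is $\o{\Ger}^{(p+n-1,p)}$ only when the target is the full stack $\cF_{\bb{C}}^{p+1,\cl}(-,n)$ of closed forms: by Definition~\ref{flatkpgerbe} a flat gerbe is a gerbe whose top $p$-form is closed, and the paper's proof of (2) obtains the cofiber sequence $\o{\Ger}^{(p+n-1,p)}\ra\Ger^{(p+n-1,p)}\ra\cF_{\bb{C}}^{p+1,\cl}(-,p+n-1)$ by pasting the defining pullback of $\o{\Ger}^{(p+n-1,p)}$ onto the cofiber sequence $\cF_{\bb{C}}^{p,\cl}\ra\cF_{\bb{C}}^{p}\xra{\delta}\cF_{\bb{C}}^{p+1,\cl}$. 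Your displayed sequence instead has target $\cF_{\bb{C}}^{p+1,\cl,\tu{in}}(-,n)$. But the lifted curvature map $\Ger^{(p+n-1,p)}\ra\cF_{\bb{C}}^{p+1,\cl,\tu{in}}(-,n)$ is an \emph{equivalence} (this is what the paper proves, and what you too need for part (1)), so its fiber is trivial; if your sequence were a fiber sequence it would force $\o{\Ger}^{(p+n-1,p)}\simeq 0$, which is absurd. The $3\times 3$ diagram you describe cannot output the sequence you wrote: you need one statement with target the integral closed forms (for (1)) and a separate one with target all closed forms (for (2)).

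Second, your surjectivity argument for (1) is not valid. You reduce it to the vanishing of a connecting map into $\pi_{-1}$ of the flat gerbes on $X$, and claim $(p+n)>0$ forces this by connectivity. For a general derived smooth Artin stack that group receives contributions such as $\pi_0\Map_{\dSmSt_{\bb{R}}}(X,K(\bb{G}_m,p+n))$, and since $X$ may have arbitrary cohomological amplitude the hypothesis $(p+n)>0$ does not make these vanish. No such obstruction analysis is needed: the paper writes $\cF_{\bb{C}}^{p+1,\cl,\tu{in}}(-,n)$ as the pullback of $\DR_{\geq p}^{\bb{C}}(-)[p+1+n]\ra\DR^{\bb{C}}(-)[p+1+n]\la\bb{Z}(1)[p+1+n]$, uses the formal fact that in a stable setting the two parallel maps in a pullback square have equivalent cofibers to identify $\cF_{\bb{C}}^{p+1,\cl,\tu{in}}(-,n)$ with $\tu{cofib}(c[p+n])$ for the constant-functions map $c$, and then applies the exponential sequence to identify this cofiber with $\tu{cofib}(\delta\log_{[1,p]}[p+n-1])=\Ger^{(p+n-1,p)}$. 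This yields an equivalence of sheaves, so (1) follows upon mapping in any $X$ with no surjectivity or connectivity input at all. Your construction of the curvature map by descending the de~Rham boundary to the cofiber is a sensible complement to this, but it cannot substitute for the equivalence.
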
 

\begin{proof}
By definition, one can realize the derived smooth stack of integral $n$-shifted closed $p$-forms as the pullback diagram
\begin{diagram}
\cF_{\bb{C}}^{p+1,\cl,\tu{in}}(-,n)  &\rTo  &\DR_{\geq p}^{\bb{C}}(-)[p+1+n]\\
\dTo_g     &        &\dTo_f\\
\bb{Z}(1)[p+1+n]  &\rTo^{}  &\DR^{\bb{C}}(-)[p+1+n]
\end{diagram}
in the $\infty$-category of sheaves of complex valued graded mixed cdga's where $\bb{Z}(1):=2\pi\sqrt{-1}\cdot\bb{Z}$.  Now observe that 
\[  \tu{cofib}~f\simeq\DR_{< p}(-)[p+1+n]\otimes\bb{C}  \]
and since formally (in any stable $\infty$-category)
\[   \tu{cofib}~g\simeq\tu{cofib}~f  \]
there exists a cofiber sequence
\begin{diagram}
\cF_{\bb{C}}^{p+1,\cl,\tu{in}}(-,n)  &\rTo  & \bb{Z}(1)[p+1+n]  \\
\dTo     &        &\dTo_{c}\\
0              &\rTo & \DR_{<p}^{\bb{C}}(-)[p+1+n]
\end{diagram}
of sheaves where $c$ is the constant functions map.  

We compose this cofiber sequence with the cofiber sequence of $c$ to give a diagram
\begin{diagram}
\cF_{\bb{C}}^{p+1,\cl,\tu{in}}(-,n)  &\rTo  &  \bb{Z}(1)[p+1+n]  &\rTo     & 0     \\
\dTo     &        &\dTo_{c[p+1+n]}   &        &\dTo\\
0              &\rTo & \DR_{\leq p}^{\bb{C}}(-)[p+1+n]  &\rTo  &\tu{cofib}(c[p+1+n])
\end{diagram}
of sheaves.  Again, cofiber sequences coincide with fiber sequences in a stable $\infty$-category and so since the two inner squares in the diagram above are pullbacks the outer diagram is a pullback leading to the equivalence
\[      \cF_{\bb{C}}^{p+1,\cl,\tu{in}}(-,n)\xras\tu{cofib}(c[p+n])   \]
of sheaves.

The exponential exact sequence $\bb{Z}(1)\ra\bb{C}\xra{\exp}\bb{C}^*$ induces a cofiber sequence
\begin{diagram}
\bb{Z}(1)  &\rTo  & \cO\otimes\bb{C}  \\
\dTo     &        &\dTo_{\exp}\\
0              &\rTo & \cO^*
\end{diagram}
and therefore   
\[   \tu{cofib}(c)\xras\tu{cofib}(\delta\log_{(1,p)}[-1])    \]
is an equivalence of sheaves.  As a result, for any derived smooth Artin stack $X$, there exists an equivalence
\[   \cF_{\bb{C}}^{p+1,\cl,\tu{in}}(X,n)\xras\Ger^{(p+n-1,p)}(X)  \]
of spaces.  This proves (1).

For (2), we must show that the following sequence
\begin{diagram}
\o\Ger^{(p+n-1,p)}(X)    &\rTo  &\Ger^{(p+n-1,p)}(X)  \\
\dTo  &                                                &\dTo\\
0       &\rTo                                    &  \cF_{\bb{C}}^{p+1,\cl}(X,n)   
\end{diagram}
is a cofiber sequence.  This follows from the diagram
\begin{diagram}
\o\Ger^{(p+n-1,p)}(-)    &\rTo  &\Ger^{(p+n-1,p)}(-)  \\
\dTo  &                                                &\dTo\\
\cF_{\bb{C}}^{p,\cl}(-,p+n-1)  &\rTo     &  \cF_{\bb{C}}^{p}(-,p+n-1) \\ 
\dTo  &                                                &\dTo_{\delta}\\
0       &\rTo                                    &  \cF_{\bb{C}}^{p+1,\cl}(-,p+n-1)   
\end{diagram}
of sheaves.  The top square is a pullback by definition and therefore a pushout since we are in the stable setting.  The bottom square is a pushout and therefore the composite square is a pushout.  
Therefore, for any derived smooth Artin stack $X$, we have the stated cofiber sequence.
\end{proof}

\begin{ex}
When our $n$-shifted $p$-preplectic derived smooth stack is simply a $p$-preplectic smooth manifold, then Theorem~\ref{mainthm} reduces to the classical Weil-Kostant integrality theorem stated in the introduction.  From Example~\ref{preplecman} we deduce that an integral zero-shifted $p$-preplectic derived smooth Artin stack of the form $X=\Spec(\Spec^{\Man_{\bb{R}}}M)$ for a smooth manifold $M$ is simply $M$ endowed with a complex closed $(p+1)$-form $\omega$ with integral periods.  The proof of Theorem~\ref{mainthm} then states that there exists a $(p-1,p)$-gerbe on $M$ whose curvature is $\omega$.  For the case $p=1$, the $(0,1)$-gerbe is a line bundle with connection.  For the case $p=2$, the $(1,2)$-gerbe is precisely a gerbe with connection and curving in the language of \cite{Br}.
\end{ex}

\section{Categorical gerbes}\label{cg}

In this section we introduce a complementary notion of $n$-gerbe based on a sheaf of linear $(\infty,n)$-categories.  We call these $n$-gerbes categorical $n$-gerbes to distinguish them from the $n$-gerbes introduced in Section~\ref{dwkit}.  Every $n$-gerbe admits a presentation as a categorical $n$-gerbe.

There exists a number of approaches to defining $(\infty,n)$-categories for $n\geq 0$.  We will utilize the theory of Segal $n$-categories contained in \cite{Si}.  However our results do not depend on the particular model and hold in any equivalent theory of $\inftyn$-categories.  For a review using our notation see Appendix~A.  

The cartesian closed model category of $(\infty,n)$-precategories is denoted $\PC^n(\sSet)$ where $\sSet$ is the category of simplicial sets endowed with the Kan model structure.  It is defined inductively by setting $\PC^0(\sSet):=\sSet$.  The objects of $\PC^n(\sSet)$ for $n>0$ are then pairs $(S,C)$ consisting of a set $S$ of \textit{objects} and a functor 
\[    C:\Delta^\circ_{S}\ra\PC^{n-1}(\sSet),    \] 
where $\DS$ is the category of sequences of objects in $S$ such that $C(x)$ is a final object for all $x\in S$.  An $\inftyn$-precategory $(S,C)$ is said to be an \textit{$\inftyn$-category} if it satisfies the Segal condition and for any sequence of objects $(x_0,\ldots,x_m)$ in $S$, the $\inftynmo$-precategory $C(x_0,\ldots,x_m)$ is an $\inftynmo$-category.  We will refer to the pair $(S,C)$ as simply $C$.

For any $\PC^{n}(\sSet)$-enriched model category $\sM$ with weak equivalences $W$, we let $\u{L}(\sM)$ denote the \textit{enriched localization} of $\sM$ with respect to $W$, ie. there exists an $\inftynpo$-category $\u{L}(\sM)$ and a map $l:\sM\ra\u{L}(\sM)$ satsifying the following universal property~: for any $\inftynpo$-category $C$, the induced map 
\[   \bb{R}\uHom(\u{L}(\sM),C)\ra\bb{R}\uHom(\sM,C)  \]
is fully faithful and its essential image consists of functors $f:\sM\ra C$ sending arrows in $W$ to equivalences in $C$.  This construction is described in Appendix~A.

The $\infty$-category of $\inftyn$-categories will be denoted 
\[   \bCatinfn:=L(\PC^n(\sSet))  \] 
and the $\inftynpo$-category of $\inftyn$-categories by 
\[    \uCatinfnpo:=\u{L}(\PC^n(\sSet)) \] 
using the enriched localization.  In Appendix~B we provide a definition of a symmetric monoidal $\inftyn$-category and we denote by 
\[   \bCatinfn^\otimes:=L(\SeMon(\PC^n(\sSet))_{\s{S}})  \] 
the $\infty$-category of symmetric monoidal $\inftyn$-categories and
\[   \uCatinfn^\otimes:=\u{L}(\SeMon(\PC^n(\sSet))_{\s{S}})  \] 
the $\inftynpo$-category of symmetric monoidal $\inftyn$-categories.  Here $\SeMon(\PC^n(\sSet))_{\s{S}}$ is the (enriched) model category of Segal monoid objects in $\PC^n(\sSet)$ with respect to the \textit{special} model structure.

Given any commutative monoid object $R$ in the enriched model category $\PC^n(\sSet)$, the model category $\Mod_R(\PC^n(\sSet))$ of $R$-modules is a $\PC^n(\sSet)$-enriched model category (see Appendix~B for more details).  Since for any symmetric monoidal $\PC^{n}(\sSet)$-enriched model category $\sM$ with weak equivalences $W$ the $\inftynpo$-category $\u{L}^\otimes(\sM)$ given by the enriched monoidal localization of $\sM$ is symmetric monoidal, the $\inftynpo$-category 
\[   \u\Mod_R(\PC^n(\sSet)):=\u{L}^\otimes(\Mod_R(\PC^n(\sSet)))    \]
of $R$-modules is a symmetric monoidal $\inftynpo$-category.

Let $R$ be a commutative dg-algebra over $\bb{R}$.  We will define the $(\infty,n+1)$-category of complex $R$-linear $(\infty,n)$-categories by induction.  Let 
\[    {\Lin}_{\bb{C}}^0(R):=\Mod(R\otimes\bb{C})  \]
be the symmetric monoidal model category of complex $R$-modules and for all $n>0$, 
\[   {\Lin}_{\bb{C}}^n(R):=\Mod_{\Lin_{\bb{C}}^{n-1}(R)}(\PC^n(\sSet))  \]
the symmetric monoidal ($\PC^n(\sSet)$-enriched) model category of complex $R$-linear $\inftyn$-categories.  

\begin{dfn}
Let $R$ be a commutative dg-algebra over $\bb{R}$.  A $(\infty,n)$-category is said to be \textit{complex $R$-linear} if it is endowed with the structure of a $\Lin_{\bb{C}}^{n-1}(R)$-module object in the $\PC^n(\sSet)$-enriched symmetric monoidal model category $\PC^n(\sSet)$ of $(\infty,n)$-categories.  
\end{dfn}

For $n\geq 0$, let
\[   \bLin_{\bb{C}}^n(R):=L^\otimes({\Lin}_{\bb{C}}^n(R))   \]
be the symmetric monoidal $\infty$-category of \textit{complex $R$-linear $(\infty,n)$-categories}.  Using the enriched monoidal localization described in Appendix~B, then
\[   \u{\Lin}_{\bb{C}}^n(R):=\u{L}^\otimes({\Lin}_{\bb{C}}^n(R))   \]
is the symmetric monoidal $(\infty,n+1)$-category of \textit{complex $R$-linear $(\infty,n)$-categories}.

\begin{dfn}
Let $R$ be a commutative dg-algebra over $\bb{R}$.  A commutative monoid object in the model category of complex $R$-linear $\inftyn$-categories is called a \textit{complex $n$-algebra} over $R$.
\end{dfn}

Let
\[      \textbf{CAlg}_{\bb{C}}^n(R):=L(\CMon(\Lin_{\bb{C}}^{n}(R))).  \]
denote the $\infty$-category of complex $n$-algebras over $R$.  There exists an equivalence equivalence
\[  \textbf{CAlg}_{\bb{C}}^n(R)\xras \bCMon(\bLin_{\bb{C}}^n(R))\xras (\bCat_{(\infty,n)}^{\otimes})_{\u{\Lin}_{\bb{C}}^{n-1}(R)/}    \]
of $\infty$-categories where $\bCMon(C)$ denotes the $\infty$-category of commutative monoid objects in an $\infty$-category (see Appendix~B).  For any complex $n$-algebra $S$ over $R$, we consider the $\inftynpo$-category 
\[   \u\Mod^n(S):=\u{L}(\Mod_{S}(\Lin_{\bb{C}}^n(R)))   \]
of modules over $S$.  

Let $A=(|A|,\cO_A)$ be an affine derived manifold and $U\subseteq|A|$.  We will define the $\infty$-category of sheaves of complex valued $A$-linear $\inftyn$-categories also by induction.  Let 
\[   A-{\Lin}_{\bb{C}}^0(U):=\Mod(\cO_A^\alg(U)\otimes\bb{C})  \]
be the symmetric monoidal model category of $\cO_A^\alg(U)\otimes\bb{C}$-modules, for the constant sheaf $\bb{C}$, and for all $n>0$, 
\[   A-{\Lin}_{\bb{C}}^n(U):=\Mod_{A-{\Lin}_{\bb{C}}^{n-1}(U)}(\PC^n(\sSet))  \]
the symmetric monoidal ($\PC^n(\sSet)$-enriched) model category of complex valued $\cO_A^\alg(U)\otimes\bb{C}$-linear $\inftyn$-categories.  For all $n\geq 0$ we let
\[  A-\u\Lin_{\bb{C}}^n:O(A)^\circ\ra\bCat_{\inftynpo}^\otimes   \]
denote the sheaf of symmetric monoidal $\inftynpo$-categories sending $U$ to $\u{L}^\otimes(A-{\Lin}_{\bb{C}}^n(U))$.

\begin{dfn}
Let $A=(|A|,\cO_A)$ be an affine derived manifold and $n\geq 0$.  A  $(A-\u\Lin_{\bb{C}}^{n})$-module object in the model category of sheaves of $\inftynpo$-categories on $|A|$ will be called a \textit{sheaf of complex $A$-linear $\inftyn$-categories}.
\end{dfn}

It will be useful to use the following notation.  Let
\[  \Lin\Sh_{\bb{C}}^0(A):=\Mod(\cO_A^\alg\otimes\bb{C}) \] 
denote the symmetric monoidal model category of complex $\cO_A^\alg$-modules and
\[  \bLin\bSh_{\bb{C}}^0(A):=L^\otimes(\Lin\Sh_{\bb{C}}^0(A)) \] 
the symmetric monoidal $\infty$-category of complex valued $\cO_A^\alg$-modules.  For all $n>0$, let 
\[   \Lin\Sh_{\bb{C}}^n(A):=\Mod_{A-\u\Lin_{\bb{C}}^{n-1}}(\Sh_{\Catinfn}(A))   \]
denote the symmetric monoidal model category of sheaves of complex valued $A$-linear $\inftyn$-categories and
\[   \bLin\bSh_{\bb{C}}^n(A):=L^\otimes(\Lin\Sh_{\bb{C}}^n(A))   \]
the symmetric monoidal $\infty$-category of sheaves of complex valued $A$-linear $\inftyn$-categories.

We will refine this definition somewhat by only allowing $\inftyn$-categories that are \textit{locally presentable}.

\begin{dfn}
Let $C$ be a $\inftyn$-category and $n>0$.  Then $C$ is said to be \textit{locally presentable} if there exists a combinatorial $\PC^{n-1}(\sSet)$-enriched model category $\sN$ and an equivalence
\[    C\simeq\u{L}(\sN)   \]
of $\inftyn$-categories.
\end{dfn}

We will denote by $\bCatinfn^{\tu{lp}}$ the subcategory of $\bCatinfn$ spanned by locally presentable $\inftyn$-categories and continuous functors (see \cite{Si} for more details on colimits in $\inftyn$-categories).  The $\infty$-category of sheaves of complex valued $A$-linear locally presentable $\inftyn$-categories will be denoted by $\bLin\bSh_{\bb{C}}^n(A)^{\tu{lp}}$.

\begin{dfn}
Let $A$ be an affine derived manifold.  A commutative monoid object in the model category of complex $\cO_A^\alg$-modules is called a \textit{sheaf of complex $0$-algebras} over $A$.  For all $n>0$, a commutative monoid object in the model category of sheaves of complex $A$-linear $\inftyn$-categories is called a \textit{sheaf of complex $n$-algebras} over $A$.
\end{dfn}

For all $n\geq 0$, let
\[   \textup{CAlgSh}_{\bb{C}}^n(A):=\CMon(\Lin\Sh_{\bb{C}}^n(A))  \]
denote the model category of sheaves of complex $n$-algebras over $A$ and
\[   \textbf{CAlg}\bSh_{\bb{C}}^n(A):=L(\textup{CAlgSh}_{\bb{C}}^n(A))   \]
the $\infty$-category of sheaves of complex $n$-algebras over $A$.

These definitions lead to the chain of equivalences
\[    \textbf{CAlg}\bSh_{\bb{C}}^n(A)\xras\bCMon(\bLin\bSh_{\bb{C}}^n(A))\xras\bSh_{\bCatinfn^\otimes}(A)_{A-\u\Lin_{\bb{C}}^{n-1}/}  \]
of $\infty$-categories for all $n\geq 0$.

Modules over sheaves of complex $n$-algebras are now defined in the obvious way.  Let 
\[   \Mod^n(R_A):=\Mod_{R_A}(\Lin\Sh_{\bb{C}}^n(A))  \]
denote the model category of modules over a sheaf $R_A$ of complex $n$-algebras over $A$ and
\[    \bMod^n(R_A):=L(\Mod^n(R_A))  \]
the $\infty$-category of $R_A$-modules.

We can impose a finiteness condition on all of the above definitions.  Recall that an object in a symmetric monoidal $\infty$-category is said to be \textit{rigid} or \textit{dualizable} if it admits a dual in its homotopy category \cite{rig}.  In higher categorical dimension, we will need the notion of \textit{fully dualizable} as contained in \cite{Lu7}.  We briefly recall the definition.  

Let $C$ be a $\inftyn$-category for $n\geq 2$ and denote by $\h_2(C)$ its homotopy 2-category.  Then $C$ is said to admit adjoints for $1$-morphisms if the $2$-category $\h_2(C)$ admits adjoints for $1$-morphisms.  Let $1<k<n$.  Then $C$ is said to admit adjoints for $k$-morphisms if the $\inftynmo$-category of morphisms between any two objects admits adjoints for $(k-1)$-morphisms.  An $\inftyn$-category is said to have \textit{adjoints} if $C$ admits adjoints for $k$-morphisms for all $0<k<n$.

When $C$ is endowed with a symmetric monoidal structure, its homotopy category $\h(C)$ is symmetric monoidal.  In this case, let $B_2(\h(C))$ be the $2$-category with a single object and whose category of $1$-morphisms is $\h(C)$ with composition is given by the tensor product in $C$.  

\begin{dfn}
Let $C$ be a symmetric monoidal $\inftyn$-category for $n\geq 2$.  Then $C$ is said to \textit{have duals} if 
\begin{enumerate}
\item The $2$-category $B_2(\h(C))$ has adjoints for $1$-morphisms.
\item The $\inftyn$-category $C$ has adjoints for $(k-1)$-morphisms.  
\end{enumerate}
\end{dfn}

We denote by $C^{\fd}$ the symmetric monoidal $\inftyn$-category with duals satisfying the following universal property~: for any symmetric monoidal $\inftyn$-category $D$ admitting duals, there exists a symmetric monoidal functor $i:C^{\fd}\ra C$ such that composition with $i$ induces an equivalence
\[    \u{\tu{Fun}}^\otimes(D,C^{\fd})\ra\u{\tu{Fun}}^\otimes(D,C)  \]
of $\inftyn$-categories.  
For $n=1$, fully dualizable will mean \textit{dualizable} as is standard in the setting of $\infty$-categories.

Since the collection of fully dualizable objects is stable with respect to the symmetric monoidal structure, we have an endofunctor on the $\infty$-category $\bCatinfn^\otimes$ of symmetric monoidal $\inftyn$-categories sending a symmetric monoidal $\inftyn$-category $C$ to its symmetric monoidal $\inftyn$-category $C^{\fd}$ of fully dualizable objects.  We extend this construction levelwise to obtain a functor
\[    (-)^{\tu{fd}}_n:\bSh_{\bCatinfn^\otimes}(A)\ra\bSh_{\bCatinfn^\otimes}(A)   \]
between $\infty$-categories for any topological space $|A|$. 

Using these definitions, for $n\geq 0$, we denote the image of $A-\u\Lin_{\bb{C}}^{n}$ under the functor $(-)^\fd_{n+1}$ by $A-\u\Lin_{\bb{C}}^{n,\fd}$.  Explicitly, 
\[   A-\u\Lin_{\bb{C}}^{n,\fd}:O(A)^\circ\ra\bCat_{\inftynpo}^\otimes   \]
is the sheaf sending $U$ to the symmetric monoidal $\inftynpo$-category $\u\Lin^{n}_\bb{C}(A)(U)^{\fd}$ of $\bb{C}$-valued fully dualizable $\cO_A^\alg(U)$-linear $\inftyn$-categories.  

Likewise, for $n\geq 0$ and some sheaf $R_A$ of fully dualizable complex $n$-algebras over $A$, we denote by $R_A-\u\Lin_{\bb{C}}^{n,\fd}$ the image of $R_A-\u\Lin_{\bb{C}}^{n}\in\bMod^{n+1}(R_A)$ under the functor $(-)^{\fd}_{n+1}$.  Explicitly, 
\[  R_A-\u\Lin_{\bb{C}}^{n,\fd}:O(A)^\circ\ra\bCat_{\inftynpo}^\otimes    \]
is the sheaf of symmetric monoidal $\inftynpo$-categories sending $U$ to the symmetric monoidal $\inftynpo$-category $(\u{L}^\otimes(\Mod_{R_A(U)}(\u\Lin^{n}_\bb{C}(A)(U))))^{\fd}$ of fully dualizable $R_A(U)$-modules.  This is an $A$-linear $\inftynpo$-category and therefore defines an object in $\bLin\bSh_{\bb{C}}^{n+1}(A)$.  

\begin{dfn}
Let $A$ be an affine derived manifold and $n\geq 0$.  A \textit{perfect $n$-sheaf} over $A$ is an object in $\bLin\bSh_{\bb{C}}^n(A)^{\lp}$ locally, for the \'etale topology, of the form $R_A-\u\Lin_{\bb{C}}^{n-1,\fd}$ where $R_A$ is a sheaf of complex fully dualizable $(n-1)$-algebras over $A$.
\end{dfn}

We denote the full subcategory of $\bLin\bSh_{\bb{C}}^{n}(A)^{\lp}$ spanned by perfect $n$-sheaves by $\textbf{PerSh}_{\bb{C}}^n(A)$.  

Let us now restrict the $\infty$-category of perfect $n$-sheaves to those which satisfy a local triviality assumption.  This definition should eventually be replaced by a theorem, that is, all perfect $n$-sheaves over $A$ are expected to be locally trivial under some mild assumptions.

\begin{dfn}\label{lt}
Let $A=(|A|,\cO_A)$ be an affine derived manifold.  The $\infty$-category of \textit{locally trivial} perfect $n$-sheaves over $A$ is the full subcategory of $\textbf{PerSh}_{\bb{C}}^n(A)$ satisfying the following conditions~:
\begin{enumerate}
\item Let $0\leq m\leq n$.  For any sheaf $R_A$ of fully dualizable $m$-algebra's over $A$, there exists an equivalence
\[       R_A\simeq B^m\cO_A^\alg    \] 
of sheaves.
\item Let $0\leq m\leq n$ and $R$ be a commutative dg-algebra over $\bb{C}$.  For any fully dualizable $R$-linear $\inftyn$-category $C$, there exists an equivalence
\[       C\simeq\u{\Mod}^{n-1}(S)    \]
of $\inftyn$-categories for some fully dualizable $(n-1)$-algebra $S$ over $R$.
\end{enumerate}
\end{dfn}

We denote the $\infty$-category of locally trivial perfect $n$-sheaves by $\textbf{PerSh}_{\bb{C}}^n(A)^{\tu{lt}}$.

The $\infty$-category of locally trivial perfect $n$-sheaves on $A$ is a symmetric monoidal $\infty$-category using the pointwise monoidal structure.  We can then consider the $\infty$-category of \textit{invertible} objects in $\textbf{PerSh}_{\bb{C}}^n(A)^{\tu{lt}}$, ie. objects $F_A$ for which there exists another object $F_A'$ such that
\[  F_A\otimes F_A'\ra 1\]
is an equivalence in $\textbf{PerSh}_{\bb{C}}^n(A)^{\tu{lt}}$ where the right hand side denotes the unit object.   More generally, for any symmetric monoidal $\inftyn$-category $C$, we will denote the full subcategory of $C$ spanned by invertible objects by $C^{\tu{inv}}$.

\begin{dfn}
Let $A$ be an affine derived manifold.  A \textit{categorical $n$-gerbe} on $A$ is an invertible object in $\textbf{PerSh}_{\bb{C}}^n(A)^{\tu{lt}}$.
\end{dfn}

Let $\textbf{CatGer}^n(A)$ denote the full subcategory of $\textbf{PerSh}_{\bb{C}}^n(A)$ spanned by categorical $n$-gerbes on $A$.  

The $\infty$-category of categorical $n$-gerbes on $A$ is a $\infty$-groupoid.  The construction of the $\infty$-category of categorical $n$-gerbes is functorial in $A$ and thus we have a presheaf of spaces on the $\infty$-site $(\bdAff_{\bb{R}},et)$ of affine derived manifolds sending $A$ to $\textbf{CatGer}^n(A)$.  We denote by 
\[      \tu{CatGer}^n:(\bdAff_{\bb{R}})^\circ\ra\textbf{S}   \]
the stack \textit{associated} to the prestack sending $A$ to $\textbf{CatGer}^n(A)$.

We can now define the notion of a categorical $n$-gerbe on an arbitrary derived smooth stack.

\begin{dfn}
Let $X$ be a derived smooth stack.  Then
\[    \tu{CatGer}^n(X):=\Map_{\dSmSt_{\bb{R}}}(X,\tu{CatGer}^n).     \] 
is the space of \textit{categorical $n$-gerbes} on $X$.
\end{dfn}

Let $A=(|A|,\cO_A)$ be an affine derived manifold.  Recall that we have the group stack
\[   \bb{G}_m(A):O(A)^\circ\ra\textbf{Gp}(\textbf{S})   \]
on $A$ sending $U$ to $|\cO^*_A(U)|$ (see Example~\ref{gmex}).  The main result in this section enabling a link to the results of Section~\ref{dwkit} is the following~:  

\begin{prop}\label{gerandk}
Let $n\geq 0$.  There exists an equivalence
\[   \tu{CatGer}^n\ra K(K(\bb{G}_m,1)\rtimes\bb{Z},n)  \]
of derived smooth stacks.
\end{prop}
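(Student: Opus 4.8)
The plan is to argue by induction on $n$, reducing the statement to a single delooping equivalence at each stage. Write $H:=K(\bb{G}_m,1)\rtimes\bb{Z}$. Since the iterated classifying construction satisfies $K(H,n)\simeq\B K(H,n-1)$ by its very inductive definition (Example~\ref{gmex}), it suffices to produce a base case $\tu{CatGer}^0\simeq H$ together with a natural delooping equivalence $\tu{CatGer}^n\simeq\B\,\tu{CatGer}^{n-1}$ for each $n\geq 1$. Combining these, an easy induction then gives $\tu{CatGer}^n\simeq\B^n H\simeq K(H,n)$.

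For the base case, recall that a categorical $0$-gerbe on $A$ is an invertible object of $\textbf{PerSh}_{\bb{C}}^0(A)^{\tu{lt}}$, i.e. a complex $\cO_A^\alg$-module which is \'etale-locally equivalent to the unit $\cO_A^\alg\otimes\bb{C}$ (this is the line bundle picture already indicated in the example following Definition~\ref{gerbeonstack}). I would compute the associated Picard stack directly: the automorphisms of the unit are exactly the invertible sections $\cO_A^*$, so that locally trivial invertible modules in a fixed cohomological degree are classified by $\B\bb{G}_m=K(\bb{G}_m,1)$, while the cohomological shift of an invertible complex contributes a free factor of $\bb{Z}$. Using that the \'etale topology is subcanonical (Proposition~\ref{subcanonical}) to glue, one obtains $\tu{CatGer}^0\simeq K(\bb{G}_m,1)\rtimes\bb{Z}=H$, the semidirect product recording the action of the cohomological shift on the grading.

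For the inductive step the key is the local triviality hypothesis of Definition~\ref{lt}. By construction every categorical $n$-gerbe is a locally trivial perfect $n$-sheaf, hence \'etale-locally equivalent to the unit object $A-\u\Lin_{\bb{C}}^{n-1,\fd}=\u\Mod^{n-1}(\cO_A^\alg)$ of the symmetric monoidal $(\infty,n)$-category of $A$-linear $(\infty,n-1)$-categories. Consequently the substack of invertible objects is a torsor under the Picard group stack $\u{\tu{Aut}}(\mathbf{1})$ of self-equivalences of this unit. The essential computation is then to identify $\u{\tu{Aut}}(\mathbf{1})$: an invertible symmetric monoidal endofunctor of $\u\Mod^{n-1}(\cO_A^\alg)$ is given by tensoring against an invertible $\cO_A^\alg$-linear $(\infty,n-2)$-category, so $\u{\tu{Aut}}(\mathbf{1})\simeq\tu{CatGer}^{n-1}$. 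Since locally trivial torsors under a group stack $G$ are classified by $\B G$, and since $\B$ is defined precisely through the same sheafification used for $\tu{CatGer}^n$ (Example~\ref{gmex}), the associated stack of invertible objects is $\B\,\tu{CatGer}^{n-1}$, giving the required delooping.

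The hard part will be this inductive step, specifically the Morita-theoretic identification $\u{\tu{Aut}}(\mathbf{1})\simeq\tu{CatGer}^{n-1}$ together with the torsor classification, carried out coherently in the sheaf of $(\infty,n)$-categories. One must check that an invertible module over the unit is genuinely a torsor (which is exactly where Definition~\ref{lt} is used), that its automorphism group stack really is the Picard stack of the previous categorical level, and that forming $\B$ commutes with passage to the associated stack so that the pointwise torsor classification sheafifies to the stated equivalence. Each of these is routine in the $1$-categorical or spectral setting but requires the formalism of symmetric monoidal $\inftyn$-categories and their invertible objects recalled in Appendix~B; this higher-categorical bookkeeping, rather than any new geometric input, is the principal obstacle.
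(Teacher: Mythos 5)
Your proposal is correct and follows essentially the same route as the paper: an induction whose base case identifies $\tu{CatGer}^0$ with the Picard groupoid of shifted line bundles (objects $\bb{Z}$, automorphisms $\bb{G}_m$, symmetry $(-1)^{nm}$, hence $K(\bb{G}_m,1)\rtimes\bb{Z}$), and whose inductive step uses the local triviality conditions of Definition~\ref{lt} to realize $\tu{CatGer}^{n}$ as the delooping of the automorphism group stack of the unit, identified via tensoring-with-invertibles as the previous categorical level. The paper packages the delooping as $\tu{CatGer}^{n+1}\simeq K(G^{(n)},1)$ with $G^{(n)}=\u\Aut(A-\u\Lin^{n,\fd})\simeq A-\u\Lin^{n,\tu{inv}}$ rather than writing $\tu{CatGer}^{n}\simeq\B\,\tu{CatGer}^{n-1}$ explicitly, but this is the same argument.
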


\begin{proof}
Let $A=(|A|,\cO_A)$ be an affine derived manifold and $\{U_i\}$ be a cover of $|A|$.  The proof will be by induction.  

Let $F_A$ be a categorical $0$-gerbe on $A$.  Then, by definition, $F_A$ is an invertible object in the $\infty$-category $\bMod(\cO_A^\alg\otimes\bb{C})^{\fd}$ of dualizable complex $\cO_A^\alg$-modules, ie. $F_A$ is a (shifted) line bundle on $A$.  Therefore, there exists an equivalence
\[  \tu{CatGer}^{0}\simeq G^{(0)}   \]
where $G^{(0)}$ is the group stack on the $\infty$-category of derived manifolds sending $A$ to the grouplike commutative monoid object in $\bSh(A)$ with the following explicit description.  It is the sheaf of symmetric monoidal groupoids on $|A|$ whose levelwise objects at $U$ are integers $n\in\bb{Z}$ and whose non-trivial morphisms for any object $n$ is given by $\Aut(n)=\bb{G}_m(A)(U)=|\cO_A^*(U)|$.  The tensor product $\otimes:\bb{Z}\times\bb{Z}\ra\bb{Z}$ is given by addition and the symmetry structure is 
\[   (-1)^{nm}:n\otimes m\simeq m\otimes n   \]
to take into account the non-commutativity of shifts.  It follows that 
\[    G^{(0)}\simeq K(\bb{G}_m,1)\rtimes\bb{Z}  \] 
where $\bb{Z}$ is the constant stack.

Set $G:=K(\bb{G}_m,1)\rtimes\bb{Z}$.  We now assume that there exists an equivalence
\[   \tu{CatGer}^n\xras K(G,n)  \]
of stacks.  By definition, for any categorical $n$-gerbe $F_A$ on $A$, there exists an equivalence
\[     F_A\simeq R_A-\u\Lin_{\bb{C}}^{n-1,\fd}  \]
in $\bLin\bSh_{\bb{C}}^n(A)^\lp$ for some sheaf $R_{A}$ of complex fully dualizable $(n-1)$-algebras on $A$.  Due to the local triviality condition (i) of Definition~\ref{lt}, there exists an equivalence $R_A\simeq B^{n-1}\cO_A^\alg$ of sheaves.  Therefore, any object $F_A$ in $ \tu{CatGer}^{n}(A)$ is locally, for the \'etale topology, equivalent to $A-\u\Lin_{\bb{C}}^{n-1,\fd}$.  As a result, using the fact that we only consider invertible objects, we have an equivalence
\[     \tu{CatGer}^{n}\xras K(G^{(n-1)},1)   \]
where $G^{(n-1)}$ is the group stack sending $A$ to the stack $\u\Aut(A-\u\Lin^{n-1,\fd})$ of automorphisms of $A-\u\Lin^{n-1,\fd}$ (here $A-\u\Lin^{n-1,\fd}$ is regarded as a module over itself and $\u\Aut$ denotes the internal hom in the $\infty$-category $\bLin\bSh_{\bb{C}}^n(A)^\lp$).  

Now let $F_A$ be a categorical $(n+1)$-gerbe on $A$.  In analogy to above, there exists an equivalence
\[   F_A\simeq A-\u\Lin_{\bb{C}}^{n,\fd}  \]
by the local triviality condition (i) in Definition~\ref{lt} and thus an equivalence
\[     \tu{CatGer}^{n+1}\xras K(G^{(n)},1)   \]
where $G^{(n)}$ is the group stack sending $A$ to the stack $\u\Aut(A-\u\Lin^{n,\fd})$ of automorphisms of $A-\u\Lin^{n,\fd}$.  However, there exists an equivalence
\[   \u\Aut(A-\u\Lin^{n,\fd})\xras A-\u\Lin^{n,\tu{inv}} \]
of stacks since every linear automorphism is given by tensor product with an invertible object (here the stack $A-\u\Lin^{n,\tu{inv}}$ sends $U$ to $A-\u\Lin^{n}(U)^{\tu{inv}}$ where $U\in\{U_i\}$).  

Every invertible object is fully dualizable.  Therefore, by the local triviality condition (ii) of Definition~\ref{lt}, every object in $A-\u\Lin_{\bb{C}}^{n}(U)^{\tu{inv}}$ is equivalent to $\u\Mod^{n-1}(S(U))$ for some complex $(n-1)$-algebra $S(U)$ over $\cO_A^\alg(U)$.  But again by the local triviality condition (i), there exists an equivalence $S(U)\simeq B^{n-1}\cO_A^\alg(U)$ of sheaves.  Therefore we have an equivalence 
\[     \tu{CatGer}^{n+1}\xras K(G^{(n-1)},2)  \]
of stacks.  By assumption, $G^{(n-1)}\simeq K(G,n-1)$ and therefore
\[   \tu{CatGer}^{n+1}\xras K(G,n+1)  \]
as required.
\end{proof}

It follows from the proof of Proposition~\ref{gerandk} that there exists an equivalence
\[    \Omega(\tu{CatGer}^{n+1}(A))\xras\tu{CatGer}^n(A)     \]
of spaces.  This is functorial in $A$ which leads to an equivalence 
\[  \Omega(\tu{CatGer}^{n+1}(X))\xras\tu{CatGer}^n(X)  \] 
for any derived smooth stack $X$.

In Section~\ref{dwkit} we introduced the notion of a $(n,p)$-gerbe on an affine derived manifold.  This is a $n$-gerbe with connections up to level $p$.  When we forget the connection data, a \textit{$n$-gerbe} on a derived affine manifold $A$ is a stack $F$ on $|A|$ locally equivalent to $\B^{n+1}\cO_A^*$.  The full subcategory of $\bSh(A)$ spanned by $n$-gerbes on $A$ will be denoted $\bGer^n(A)$.  In the notation of Section~\ref{dwkit}, the space $\bGer^n(A)$ is simply $\bGer^{(n,0)}(A)$.  We denote by 
\[    \Ger^n:(\bdAff_{\bb{R}})^\circ\ra\textbf{S} \]
the stack associated to the prestack sending $A$ to the space of $n$-gerbes $\bGer^n(A)$ on $A$.    

\begin{dfn}
Let $X$ be a derived smooth stack.  Then 
\[     \Ger^n(X):=\Map_{\dSmSt_{\bb{R}}}(X,\Ger^n).     \] 
is the space of \textit{$n$-gerbes} on $X$.
\end{dfn}

A $n$-gerbe is classifed by $K(\bb{G}_m,n+1)$, ie. for any derived smooth stack $X$, there exists an equivalence
\[     \Ger^n(X)\xras\Map_{\dSmSt_{\bb{R}}}(X,\B^{n+1}\bb{G}_m)   \]
of spaces.  Therefore, the relationship between $n$-gerbes and categorical $n$-gerbes that we need for our main result in the following section is that there exists a canonical morphism
\[    \chi:\Ger^{n}\ra\tu{CatGer}^n    \]
of stacks sending $n$-gerbes to categorical $n$-gerbes.

\section{Prequantum categories}\label{pqc}

In this section we provide the second main result of this paper which constructs a canonical functor associating to any integral $n$-shifted $p$-preplectic derived smooth Artin stack, a $\bb{C}$-linear $(\infty,p+n-1)$-category.  

Forming vector spaces of sections of complex line bundles over moduli spaces has a long history in geometric (pre)quantization of classical mechanical systems.  Hence we will use the term \textit{prequantum category} to describe the higher linear categories arising from taking sections of gerbes over derived moduli problems.  

Let $X$ be a derived smooth stack.  In Section~\ref{spds} we defined the space 
\[   \cF_{\bb{C}}^{p,\cl,\tu{in}}(X,n):=\Map_{\dSmSt_{\bb{R}}}(X,\cF_{\bb{C}}^{p,\cl,\tu{in}}(-,n))  \] 
of integral $n$-shifted complex closed $p$-forms on $X$.  By abuse of notation, we let 
\[  \cF_{\bb{C}}^{p,\cl,\tu{in}}(-,n):(\dSmSt_{\bb{R}})^\circ\ra\S  \]
be the prestack on the $\infty$-category of derived smooth stacks sending $X$ to $\cF_{\bb{C}}^{p,\cl,\tu{in}}(X,n)$.  

By the Grothendieck construction over a base $\infty$-category (see Chapter~3 of \cite{L1}), this functor corresponds to a fibered space
\[   p:\int_{\dSmSt_{\bb{R}}}\cF_{\bb{C}}^{p,\cl,\tu{in}}(-,n)\ra\dSmSt_{\bb{R}}   \]
over the $\infty$-category of derived smooth stacks.  We define
\[   p\mbox{-}\PrPlSt_{n}^{\tu{in}}:=\bFun(*,\int_{\dSmSt_{\bb{R}}}\cF_{\bb{C}}^{p+1,\cl,\tu{in}}(-,n))\times_{\bFun(*,\dSmSt_{\bb{R}})}\dSmSt_{\bb{R}}   \]
to be the $\infty$-category of \textit{integral $n$-shifted $p$-preplectic derived smooth stacks}.  It is the $\infty$-category of pairs $(X,\omega)$ where $X$ is a derived smooth stack and $\omega$ is an integral $n$-shifted $p$-preplectic structure on $X$, ie. a $n$-shifted complex closed $(p+1)$-form.  

Likewise, we let
\[   \tu{CatGer}^n:(\dSmSt_{\bb{R}})^\circ\ra\S   \]
be the prestack on the $\infty$-category of derived smooth stacks sending $X$ to $\tu{CatGer}^n(X)$.  This functor corresponds to a fibered space
\[   q:\int_{\dSmSt_{\bb{R}}}\tu{CatGer}^n\ra\dSmSt_{\bb{R}}   \]
over the $\infty$-category of derived smooth stacks.  We define
\[  n\mbox{-}\textbf{GerSt}:=\bFun(*,\int_{\dSmSt_{\bb{R}}} \tu{CatGer}^n)\times_{\bFun(*,\dSmSt_{\bb{R}})}\dSmSt_{\bb{R}}  \]
to be the $\infty$-category of derived smooth stacks endowed with a categorical $n$-gerbe. 

Finally, recall that to any $\inftyn$-category we can associate its underlying space by discarding non-invertible morphisms.  This arises through a functor 
\[    \kappa:\bCatinfn\ra\S \]
sending an $\inftyn$-category to its interior (see Section~21 of \cite{Si} for the definition of this functor).  We define a presheaf
\[    \Lin_{\inftyn}:\bdAff^\circ\ra\S   \]
of spaces sending $A$ to the space $\kappa(\bLin^n_{\bb{C}}(A))$ of complex valued $A$-linear $\inftyn$-categories.  We abuse notation by writing $\Lin_{\inftyn}$ for both its associated stack and  
\[   \Lin_{\inftyn}:(\dSmSt_{\bb{R}})^\circ\ra\S  \]
for the prestack on the $\infty$-category of derived smooth stacks sending $X$ to the space $\Lin_{\inftyn}(X)=\Map(X,\Lin_{\inftyn})$ of $\cO_X\otimes\bb{C}$-linear $\inftyn$-categories.  This functor corresponds to a fibered space
\[  r:\int_{\dSmSt_{\bb{R}}}\Lin_{\inftyn}\ra\dSmSt_{\bb{R}}   \]
over the category of derived smooth stacks.  We define
\[  n\mbox{-}\textbf{LinSt}:=\bFun(*, \int_{\dSmSt_{\bb{R}}}\Lin_{\inftyn})\times_{\bFun(*,\dSmSt_{\bb{R}})}\dSmSt_{\bb{R}}  \]
to be the $\infty$-category of derived smooth stacks $X$ together with a complex valued $\cO_X\otimes\bb{C}$-linear $\inftyn$-category.  

We have inclusions
\[  p\mbox{-}\PrPlAr_{n}^{\tu{in}}\subset p\mbox{-}\PrPlSt_{n}^{\tu{in}}, \hspace{6mm} n\mbox{-}\textbf{GerAr}\subset n\mbox{-}\textbf{GerSt}, \hspace{6mm} n\mbox{-}\textbf{LinAr}\subset n\mbox{-}\textbf{LinSt},
\]
of $\infty$-categories spanned by objects whose derived smooth stacks are Artin. 

\begin{thm}\label{mainthm2}
Let $p>0$, $n\in\bb{Z}$ and $(p+n)>0$.  There exists a canonical functor
\[    \sN_n^p:p\mbox{-}\tu{\PrPlAr}_{n}^{\tu{in}}\ra (p+n-1)\mbox{-}\textbf{\tu{LinAr}}  \]
between $\infty$-categories.
\end{thm}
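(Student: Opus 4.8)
The plan is to realize $\sN_n^p$ as the functor induced on total spaces of Grothendieck constructions by a single morphism of stacks on the site $(\bdAff_\bb{R},et)$, obtained by stringing together the constructions of the previous sections. Concretely, I would build the composite
\[ \Phi:\cF_{\bb{C}}^{p+1,\cl,\tu{in}}(-,n)\xras\Ger^{(p+n-1,p)}\ra\Ger^{p+n-1}\xra{\chi}\tu{CatGer}^{p+n-1}\ra\Lin_{(\infty,p+n-1)} \]
of morphisms in $\dSmSt_\bb{R}$. The first arrow is precisely the equivalence of stacks established inside the proof of Theorem~\ref{mainthm}, identifying integral $n$-shifted closed $(p+1)$-forms with $(p+n-1,p)$-gerbes; the hypotheses $p>0$ and $(p+n)>0$ are exactly those of that theorem and moreover guarantee $p+n-1\geq 0$, so the targets below are defined. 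The second arrow forgets the $p$-connection data, i.e. projects a $(p+n-1,p)$-gerbe onto its underlying gerbe $\Ger^{p+n-1}=\Ger^{(p+n-1,0)}$ in the sense of Definition~\ref{kpgerbe}. The third is the canonical morphism $\chi$ of Section~\ref{cg}. The last arrow sends a categorical $(p+n-1)$-gerbe, which by definition is an invertible locally trivial object of $\bLin\bSh_{\bb{C}}^{p+n-1}(A)^{\lp}$, to the underlying complex $A$-linear $(\infty,p+n-1)$-category, i.e. an object of $\kappa(\bLin^{p+n-1}_\bb{C}(A))=\Lin_{(\infty,p+n-1)}(A)$.

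Given $\Phi$, the functor itself is produced formally. Since both the source and target presheaves are stacks on $(\bdAff_\bb{R},et)$, one has $\cF_{\bb{C}}^{p+1,\cl,\tu{in}}(X,n)=\Map_{\dSmSt_\bb{R}}(X,\cF_{\bb{C}}^{p+1,\cl,\tu{in}}(-,n))$ and $\Lin_{(\infty,p+n-1)}(X)=\Map_{\dSmSt_\bb{R}}(X,\Lin_{(\infty,p+n-1)})$ for every derived smooth stack $X$, so post-composition with $\Phi$ defines a natural transformation of the two prestacks on $(\dSmSt_\bb{R})^\circ$ that appear in the definitions of $p\mbox{-}\PrPlSt_n^{\tu{in}}$ and $(p+n-1)\mbox{-}\textbf{LinSt}$. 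Unstraightening this natural transformation over the base $\dSmSt_\bb{R}$ (the Grothendieck construction of Chapter~3 of \cite{L1}) yields a functor $p\mbox{-}\PrPlSt_n^{\tu{in}}\ra(p+n-1)\mbox{-}\textbf{LinSt}$ lying over $\mathrm{id}_{\dSmSt_\bb{R}}$: it leaves $X$ unchanged and sends the preplectic form $\omega$ to the linear $(\infty,p+n-1)$-category $\Phi(\omega)$. Because the underlying stack is preserved, Artin objects go to Artin objects, and restricting along the inclusions $p\mbox{-}\PrPlAr_n^{\tu{in}}\subset p\mbox{-}\PrPlSt_n^{\tu{in}}$ and $(p+n-1)\mbox{-}\textbf{LinAr}\subset(p+n-1)\mbox{-}\textbf{LinSt}$ gives the desired $\sN_n^p$.

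The main obstacle is the final arrow $\tu{CatGer}^{p+n-1}\ra\Lin_{(\infty,p+n-1)}$. The other three morphisms are already available as maps of stacks (the first from the naturality of the Weil-Kostant equivalence of Theorem~\ref{mainthm}, the second as the tautological forgetful projection, the third being $\chi$), so the real work is to produce, coherently and functorially in $A$, a single $A$-linear $(\infty,p+n-1)$-category out of a locally trivial invertible \emph{sheaf} of such. This is the passage from $\bLin\bSh_{\bb{C}}^{p+n-1}(A)$ to $\bLin^{p+n-1}_\bb{C}(A)$ by global sections, which must be checked to respect the \'etale descent defining the associated stacks and to carry invertible objects into the interior $\kappa$. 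I expect the explicit local trivialization $R_A\simeq B^{p+n-1}\cO_A^\alg$ of Definition~\ref{lt} together with the classification $\tu{CatGer}^{p+n-1}\simeq K(K(\bb{G}_m,1)\rtimes\bb{Z},p+n-1)$ of Proposition~\ref{gerandk} to furnish exactly the control needed; once this morphism and the naturality of Theorem~\ref{mainthm} are in place, the remainder is the formal unstraightening above.
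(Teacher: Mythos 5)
Your proposal matches the paper's own proof: the paper likewise composes the Weil--Kostant equivalence of Theorem~\ref{mainthm} with the morphism $\chi$ to categorical gerbes and then a sections functor landing in complex $\cO_X$-linear $(\infty,p+n-1)$-categories, assembling everything into a functor via the Grothendieck construction over $\dSmSt_{\bb{R}}$ and restricting to Artin objects at the end. The only cosmetic differences are that you package the whole chain as a single morphism of stacks on $(\bdAff_{\bb{R}},et)$ and unstraighten once, and that you make explicit both the forgetful step $\Ger^{(p+n-1,p)}\ra\Ger^{(p+n-1,0)}$ needed before applying $\chi$ and the descent issue in the final arrow, both of which the paper leaves implicit.
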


\begin{proof}
Let $(X,\omega)$ be a derived smooth Artin stack endowed with an integral $n$-shifted complex closed $(p+1)$-form.  Then the derived Weil-Kostant integrality Theorem~\ref{mainthm} states that there exists a $(p+n-1)$-gerbe $G$ on $X$.  By Section~\ref{cg}, there exists a morphism $\chi$ associating to $G$ a categorical $(p+n-1)$-gerbe $H:=\chi(G)$.  Therefore there exists a map $\phi$
\begin{diagram}
\int_{\dSmSt_{\bb{R}}}\cF_{\bb{C}}^{p,\cl,\tu{in}}(-,n) &&\rTo^{\phi} &&\int_{\dSmSt_{\bb{R}}}\tu{CatGer}^{(p+n-1)} \\
        &\rdTo^{p}&         &\ldTo_{q}&\\
      && \dSmSt_{\bb{R}} &&
\end{diagram}
between spaces which preserves cartesian morphisms sending an element $(X,\omega)$ to $(X,H)$.  This induces a functor 
\[   \phi_*:p\mbox{-}\PrPlAr_{n}^{\tu{in}}\ra (p+n-1)\mbox{-}\textbf{GerAr}  \]
between $\infty$-categories.  

A categorical $(p+n-1)$-gerbe is a quasi-coherent sheaf of complex valued $\cO_X$-linear $(\infty,p+n-1)$-categories and therefore its collection of sections is a complex valued $\cO_X$-linear $(\infty,p+n-1)$-category.  Therefore we compose $\phi$ with the map
\begin{diagram}
\int_{\dSmSt_{\bb{R}}}\tu{CatGer}^{(p+n-1)} &&\rTo^{\psi} &&\int_{\dSmSt_{\bb{R}}}\Lin_{(\infty,p+n-1)} \\
        &\rdTo^{q}&         &\ldTo_{r}&\\
      && \dSmSt_{\bb{R}} &&
\end{diagram}
sending a pair $(X,H)$ to $(X,\Gamma(H))$ which preserves cartesian morphisms where $\Gamma(H)$ is the complex valued $\cO_X$-linear $(\infty,p+n-1)$-category of sections of $H$.  This induces a functor
\[   \psi_*:(p+n-1)\mbox{-}\textbf{GerAr}\ra (p+n-1)\mbox{-}\textbf{LinAr}    \]
between $\infty$-categories.  We set 
$\sN_n^p(X,\omega):=\psi_*\circ\phi_*(X,\omega)=(X,\Gamma(H))$ with the obvious notion of morphism. 
\end{proof}

Let $(X,\omega)$ be a $n$-shifted $p$-preplectic smooth Artin stack.  Let $n\mbox{-}\bLin$ denote the $\infty$-category of $\bb{C}$-linear $\inftyn$-categories and  
\[    f:(p+n-1)\mbox{-}\textbf{LinAr}\ra (p+n-1)\mbox{-}\bLin   \]
the forgetful functor sending a pair $(X,C_X)$, consisting of a derived smooth Artin stack $X$ together with a complex $\cO_X$-linear $(\infty,p+n-1)$-category $C_X$, to $C_{\bb{C}}$ where $C_{\bb{C}}$ is the $\bb{C}$-linear $(\infty,p+n-1)$-category associated to $C_X$ induced from the natural map $X\ra *$.  Using the notation in the proof of Theorem~\ref{mainthm2}, we can define a $\bb{C}$-linear $(\infty,p+n-1)$-category by setting
\[   \sP^p_n(X,\omega):=f\circ\sN^p_n(X,\omega) \]
which one may call the prequantum functor on the $\infty$-category of integral $n$-shifted $p$-preplectic derived smooth Artin stacks.

\begin{cor}\label{maincor}
Let $p\geq 2$, $n\in\bb{Z}$ and $(p+n)>0$.  There exists a canonical functor
\[    \sP_n^p:p\mbox{-}\tu{\PrPlAr}_{n}^{\tu{in}}\ra (p+n-1)\mbox{-}\bLin  \]
of complex linear $(\infty,p+n-1)$-categories.
\end{cor}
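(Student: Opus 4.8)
The plan is to obtain $\sP_n^p$ as the composite $f\circ\sN_n^p$, where $\sN_n^p$ is the functor supplied by Theorem~\ref{mainthm2} and $f$ is the forgetful functor introduced above; thus the entire content of the corollary reduces to verifying that $f$ is a well-defined functor of $\infty$-categories and that the composite lands in $(p+n-1)\mbox{-}\bLin$. Since the hypotheses $p>0$ and $(p+n)>0$ of Theorem~\ref{mainthm2} are weaker than $p\geq 2$ and $(p+n)>0$, that theorem applies and produces, for each integral $n$-shifted $p$-preplectic derived smooth Artin stack $(X,\omega)$, the pair $\sN_n^p(X,\omega)=(X,\Gamma(H))$ consisting of $X$ together with the complex $\cO_X$-linear $(\infty,p+n-1)$-category $\Gamma(H)$ of sections of the categorical $(p+n-1)$-gerbe $H=\chi(G)$.

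First I would construct $f$ as the pushforward along the structure map $\pi\colon X\ra *$. For the terminal derived smooth stack one has $\cO_*\otimes\bb{C}\simeq\bb{C}$, so that a complex $\cO_*$-linear $(\infty,p+n-1)$-category is exactly an object of $(p+n-1)\mbox{-}\bLin$; the functor $f$ then carries $(X,C_X)$ to $\pi_*C_X$, which is computed as the global sections $\Gamma(X,C_X)$ regarded as $\bb{C}$-linear through the unit $\bb{C}\ra\Gamma(X,\cO_X\otimes\bb{C})$. One checks that this preserves complex linearity and local presentability, since the $(\infty,p+n-1)$-categorical global sections are a limit in the $\infty$-category of sheaves of complex linear $(\infty,p+n-1)$-categories and such limits of locally presentable objects remain locally presentable.

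To upgrade $f$ to an $\infty$-functor I would exploit the fibered descriptions of $(p+n-1)\mbox{-}\textbf{LinAr}$ and $(p+n-1)\mbox{-}\bLin$ used in the construction preceding Theorem~\ref{mainthm2}: the pushforward $\pi_*$ is natural in $X$ by functoriality, so a morphism of preplectic stacks, which after applying $\sN_n^p$ yields a compatible morphism of the associated sheaves of linear categories, is sent to the induced map on global sections. As the maps $\phi$ and $\psi$ appearing in Theorem~\ref{mainthm2} preserve cartesian edges, composing them with this global-sections functor produces $\sP_n^p=f\circ\sN_n^p$ as a functor of $\infty$-categories valued in complex linear $(\infty,p+n-1)$-categories. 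The main obstacle is precisely this last functoriality at the level of $(\infty,p+n-1)$-categories: one must ensure that global sections commute suitably with the straightening of the Grothendieck fibrations defining these $\infty$-categories and carry the locally trivial perfect sheaves underlying categorical gerbes to genuine objects of $(p+n-1)\mbox{-}\bLin$. Here the local triviality of Definition~\ref{lt} together with the identification $\tu{CatGer}^{p+n-1}\simeq K(K(\bb{G}_m,1)\rtimes\bb{Z},p+n-1)$ of Proposition~\ref{gerandk} does the real work, since it forces the sheaf of linear categories being pushed forward to be controlled \'etale-locally by $\cO_X^\alg$, so that its sections remain a well-behaved complex linear $(\infty,p+n-1)$-category.
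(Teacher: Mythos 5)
Your proposal matches the paper's argument: the corollary is obtained exactly as $\sP_n^p = f\circ\sN_n^p$, where $f$ is the forgetful functor defined just before the corollary via the structure map $X\ra *$, and Theorem~\ref{mainthm2} supplies $\sN_n^p$. The additional functoriality and descent checks you sketch are elaborations of what the paper leaves implicit, but the route is the same.
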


\appendix
\numberwithin{equation}{section}

\section{$(\infty,n)$-categories}\label{inftyncategories}

This work makes extensive use of the theory of higher categories.  We refer the reader to \cite{L1} for a self contained exposition of the theory of $(\infty,1)$-categories and to \cite{Si} for the foundational results of $\inftyn$-categories.  In this appendix we simply state our definition of $(\infty,n)$-category following \cite{Si} and discuss the process of obtaining a $(\infty,n)$-category from an enriched model category.  We include the $(\infty,n)$-Yoneda lemma as an illustration of these techniques.  Other equivalent approaches to the theory of $\inftyn$-categories exist in the rapidly growing literature, see for example \cite{Be}, and we leave it to the reader to make the often straightforward adjustments for their preferred model.

An $\inftyn$-category is a category with $k$-morphisms for all $k\in[1,\infty]$, invertible for $k>n$.  The simplest way to formulate a definition of $\inftyn$-category is by induction.  We begin, more generally, by defining the notion of a weak $\sM$-category for $\sM$ an arbitrary model category.

\begin{notn}\label{deltas}
Let $S$ be a set.  We denote by $\DS$ the category consisting of~:      
\begin{itemize}
\item An object of $\DS$ is a pair $([n],c)$ where $[n]\in\Delta$ and $c:[n]\ra S$ is an arbitrary map taking values in the set $S$.  These objects will be written as strings of elements $(x_0=c(0),...,x_n=c(n))$ of $S$.
\item Let $([n],c)$ and $([m],d)$ be two objects of $\DS$.  An arrow from $([n],c)$ to $([m],d)$ is an element of the set $\DS(([n],c),([m],d))=\{u\in\Delta([n],[m]): c=d\circ u\}$.
\end{itemize}
\end{notn}

\begin{dfn}\label{mprecategory}
Let $\sM$ be a model category.  A \textit{$\sM$-precategory} is a pair $(S,C)$ where $S$ is a set of \textit{objects} and
\[    C:\Delta^\circ_{S}\ra\sM    \] 
is a functor such that $C(x)$ is a final object of $\sM$ for all $x\in S$.  
\end{dfn}

A map $(S,C)\ra (T,D)$ of $\sM$-precategories is a pair $(f,F)$ where $f:S\ra T$ is a map of sets and 
\[   F:C\ra D\circ f_{*}   \] 
is a natural transformation where $f_{*}:\DS^\circ\ra\Delta^\circ_{T}$ denotes the natural map.   Let $\PC(\sM)$ denote the category of $\sM$-precategories.  We will commonly abuse notation by referring to a $\sM$-precategory $(S,C)$ as simply $C$ and a map $(\alpha,F):(S,A)\ra(T,B)$ as simply $F:A\ra B$.  Thus $x,y\in C$ will mean $x,y\in S$ for a $\sM$-precategory $(S,C)$.  For two objects $x,y\in C$, we will also utilise the notation $\Map_{C}(x,y)$ for the object $C(x,y)$ in $\sM$ or simply $\Map(x,y)$ if the $\sM$-precategory $C$ is clear from the context.   

\begin{rmk}
One of the main reasons for imposing the condition $C(x)=*$ in Definition~\ref{mprecategory} is to obtain a cartesian structure on the model category of $\sM$-precategories.  It effectively amounts to requiring strict units.  See Section~19.3 of \cite{Si} for further discussion.
\end{rmk}

\begin{ex}\label{cattoprecat}
Every $\sM$-enriched category $D$ is a $\sM$-precategory $(S,C)$ setting $S=\Ob(D)$ and 
\[   C(x_{0},\ldots,x_{n})=D(x_{0},x_{1})\times\ldots\times D(x_{n-1},x_{n})   \]
for all $x_{i}\in D$.  This induces a fully faithful functor
\[      p:\Cat(\sM)\ra\PC(\sM)      \]
where $\Cat(\sM)$ is the category of $\sM$-enriched categories.  We will very often consider a $\sM$-enriched category $D$ as a $\sM$-precategory by identifying $D$ with $p(D)$.
\end{ex}

\begin{dfn}\label{segalcondition}
Let $\sM$ be a model category whose weak equivalences are stable under finite products.  A $\sM$-precategory $(S,C)$ is said to satisfy the \textit{Segal condition} if for all $n\geq 2$ the map
\[   C([n],c)\ra\displaystyle\prod_{1\leq i\leq n} C([1],c_i)  \]
is a weak equivalence in $\sM$ where $c_i([1]):=c(\{i-1,i\})$.
\end{dfn}

A $\sM$-precategory satisfying the Segal condition will be called a \textit{$\sM$-category}.  To avoid confusion, we will also refer to a $\sM$-enriched category as a \textit{strict} $\sM$-category.

Recall that a combinatorial model category $\sM$ is a model category which is locally presentable and cofibrantly generated.  It is moreover \textit{tractable} if the generating (trivial) cofibrations have cofibrant domains.  It is \textit{left proper} if in any pushout diagram $w:=x\coprod_z y$ in $\sM$ such that $w\ra y$ is a cofibration and $w\ra x$ is a weak equivalence, then $y\ra z$ is also a weak equivalence.

A combinatorial model category is said to be \textit{cartesian} if the cartesian product is a Quillen bifunctor and preserves colimits and the terminal object is cofibrant.  In this case the internal Hom is denoted $\uHom$.  If $\sM$ is endowed with a compatible model structure then the homotopy category $\h\sM$ is cartesian closed.  The internal Hom objects of $\h\sM$ will be denoted $\RHom(x,y)$.  

The key result for constructing the internal model category of $\inftyn$-categories is the following theorem due to Carlos Simpson (Theorem~19.3.2 of \cite{Si}).  

\begin{thm}[\cite{Si}]\label{simpsonmodel}
Let $\sM$ be a left proper tractable cartesian model category.  There exists a left proper tractable cartesian model structure on the category $\PC(\sM)$ of $\sM$-precategories in which
\begin{itemize}
\item[$(\sC)$] The cofibrations are the Reedy cofibrations.
\item[$(\sW)$] The weak equivalences are the global weak equivalences (see Definition~12.4.3 of \cite{Si}).
\end{itemize}
The fibrant objects are those $\sM$-precategories $(S,C)$ that are Reedy fibrant and satisfy the Segal condition.  When $\sM$ is a presheaf category and the cofibrations are the monomorphisms, this model structure coincides with the injective model structure on $\PC(\sM)$.
\end{thm}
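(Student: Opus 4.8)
This statement is Theorem~19.3.2 of \cite{Si}, and the plan is to reproduce the argument of \textit{loc.\ cit.}, organised around the two genuinely non-formal points: the correct definition of the global weak equivalences, and the verification that the resulting structure is cartesian. First I would record that $\PC(\sM)$ is combinatorial. The functor $\PC(\sM)\ra\mathbf{Set}$ sending $(S,C)$ to its object set $S$ is a bifibration whose fibre over $S$ is the accessibly-embedded full subcategory of $\bFun(\DS^\circ,\sM)$ cut out by the unitality conditions $C(x)=*$ for $x\in S$; since $\sM$ is locally presentable each fibre is again locally presentable, and a standard Grothendieck-construction argument (with the size conventions of the Notation section to handle the varying $S$) shows that $\PC(\sM)$ is locally presentable, complete and cocomplete. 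Tractability and the generating cofibrations will come from the Reedy theory below.

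Next I would assemble the ingredients of the model structure. For fixed $S$ the indexing category $\DS$ is a direct (Reedy) category, so $\bFun(\DS^\circ,\sM)$ carries its Reedy model structure; passing to the unital subcategory and letting $S$ vary produces the class of Reedy cofibrations together with an explicit generating set of Reedy boundary inclusions built from the generating (trivial) cofibrations of $\sM$, whose domains are cofibrant because $\sM$ is tractable. I would then define the global weak equivalences of Definition~12.4.3 of \cite{Si} intrinsically: a map $(S,C)\ra(T,D)$ is a global weak equivalence if, after a functorial Reedy-fibrant replacement satisfying the Segal condition, it induces weak equivalences $\Map(x,y)\xras\Map(fx,fy)$ in $\sM$ for all $x,y$ (full faithfulness) and is essentially surjective on objects, the latter detected via the interval $\overline{I}$ that encodes the homotopy relation on objects. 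The essential point of this step is that essential surjectivity permits the object set to change, so these equivalences are strictly finer than the naive levelwise ones; one must check the class is accessible, accessibly embedded, and closed under $2$-out-of-$3$ and retracts.

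With cofibrations and weak equivalences in hand I would invoke Jeff Smith's recognition theorem for combinatorial model categories, whose two substantive hypotheses are that the global trivial cofibrations are closed under pushout and transfinite composition, and that every map with the right lifting property against all Reedy cofibrations is a global weak equivalence. Both follow from Simpson's explicit analysis of pushouts along the Segal (spine) inclusions and along the interval-contraction maps, which simultaneously identifies the fibrant objects as precisely the Reedy fibrant $\sM$-precategories satisfying the Segal condition. Left properness is then inherited from that of $\sM$, since cofibrations are levelwise cofibrations and global weak equivalences are stable under pushout along them.

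The hard part, and the step I would treat most carefully, is the \textbf{cartesian} property: that the pushout-product of two Reedy cofibrations is a Reedy cofibration, and a global trivial cofibration whenever one factor is. A left Bousfield localization of a cartesian model category need not stay cartesian, so this cannot be black-boxed; following \cite{Si} one reduces to computing pushout-products of the generating Reedy boundary inclusions against the Segal and interval maps and verifying directly, using that $\sM$ is cartesian and that $\uHom$ preserves the Segal condition, that the result is again inverted by the localization. Cofibrancy of the terminal object then follows from cartesianness of $\sM$. Finally, when $\sM$ is a presheaf category with monomorphisms as cofibrations, the Reedy cofibrations coincide with the monomorphisms of $\PC(\sM)$, identifying the structure with the injective one and completing the proof.
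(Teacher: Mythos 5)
The paper offers no proof of this statement: it is imported verbatim as Theorem~19.3.2 of \cite{Si}, so the only meaningful comparison is with Simpson's own argument, and your outline tracks it faithfully. You correctly isolate the two genuinely non-formal points — the definition of global weak equivalences via Segal-fibrant replacement (full faithfulness on mapping objects plus essential surjectivity, exactly Definition~12.4.3 of \cite{Si}) and the cartesian/pushout-product property, which you rightly refuse to black-box since left Bousfield localization does not preserve cartesianness in general; the reduction to pushout-products of generating Reedy cofibrations against the Segal spine and interval maps is indeed how Simpson proceeds. Two caveats on fidelity to the source. First, $\DS$ is a Reedy category but not a direct one (it has degeneracies inherited from $\Delta$); this is harmless for the Reedy model structure on $\bFun(\DS^\circ,\sM)$, but the unitality constraint $C(x)=*$ means one cannot simply restrict that structure to the unital subcategory — Simpson has to rework the Reedy theory to accommodate it. Second, Simpson does not literally run Jeff Smith's recognition theorem on top of a standard localization: precisely because unitality and cartesianness break the off-the-shelf frameworks, he builds his own machinery of direct localizing systems and pseudo-generating sets over Chapters~14--19, of which your Smith-theorem step is a compressed but reasonable stand-in. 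With those adjustments, your sketch is an accurate summary of the cited proof, including the final observation that for a presheaf category with monomorphism cofibrations the Reedy cofibrations are the monomorphisms, identifying the structure with the injective one.
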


The model structure supplied by Theorem~\ref{simpsonmodel} will always be assumed when referring to the model category $\PC(\sM)$ unless otherwise stated.

We are now in a position to define an $\inftyn$-category by induction.  Let $\PC^{0}(\sM):=\sM$ and for any $n\geq 1$ 
\[    \PC^{n}(\sM):=\PC(\PC^{n-1}(\sM)).     \]
Note that since for a fibrant object $C$ in $\PC^{n}(\sM)$ the $\PC^{n-1}(\sM)$-precategory $C(x_{0},\ldots,x_{n})$ is fibrant in $\PC^{n-1}(\sM)$ for any collection of objects $x_{0},\ldots x_{n}$ in $C$, the object $C$ satisfies the Segal condition iteratively on each sub-mapping space for all $1\leq i\leq n$.

The category ${\sSet}$ of simplicial sets with the Kan model structure is a left proper tractable cartesian model category so we define ${\sSet}$ to be the model category of $\inftyz$-categories.  Thus the model category of $\inftyn$-categories is given by $\PC^{n}({\sSet})$ with the model structure of Theorem~\ref{simpsonmodel}.  The model category $\PC^n(\sSet)$ will always be regarded as a $\PC^n(\sSet)$-enriched category unless otherwise stated.  An $(\infty,n)$-precategory will refer to an arbitrary object of $\PC^n(\sSet)$.  

A fibrant $\inftyn$-precategory is a $\PC^{n-1}(\sSet)$-precategory satisfying the Segal conditions and a Reedy condition.  This Reedy condition can often be ignored in applications since every fibrant $\inftyn$-precategory is equivalent to a locally fibrant $\PC^{n-1}(\sSet)$-precategory by the equivalence to an object in the projective model structure on $\PC(\sM)$.  

In particular we have a chain of Quillen equivalences (see Theorem~2.2.16 of \cite{Lu6} and Proposition~15.7.2 of \cite{Si})
\[  \Cat(\sM)\xleftarrow{q}\PC(\sM)_{\s{P}}\xra{\id}\PC(\sM)_{\s{R}}\xra{\id}\PC(\sM)_{\s{I}}   \]
where $q$ is the left adjoint to the inclusion functor $p$ and the subscripts refer to the projective, Reedy and injective model structures respectively. 

\begin{dfn}
Let $n\geq 1$.  An $\inftyn$-precategory $C$ is said to be an \textit{$\inftyn$-category} if it is a $\PC^{n-1}(\sSet)$-precategory satisfying the Segal condition such that for any object $x$ in $C$, the $\inftynmo$-precategory $C(x)$ is an $\inftynmo$-category.
\end{dfn}   

When $\sM$ is a model category we will let $L(\sM)$ be the localisation of $\sM$ along the set of weak equivalences $W$ of $\sM$.  Thus $\h(L(\sM))\ra\h(\sM)$ is an equivalence of categories.  

Let $\sM$ be a left proper tractable cartesian model category.  Then a $\sM$-enriched model category is a $\sM$-enriched category $\sN$ endowed with a model structure such that $\sN$ is tensored and cotensored over $\sM$ and the product $\otimes:\sN\times\sM\ra\sN$ is a left Quillen bifunctor.

When $\sN$ a $\sM$-enriched model category, we will write $\u{L}(\sN)$ for the localization of $\sN$ with respect to its set of weak equivalences $W$ in the sense of enriched category theory, ie. the localization is a pair $(\u{L}(\sN),l)$ where $\u{L}(\sN)$ is a $\sM$-enriched category and $l:\sN\ra\u{L}(\sN)$ is a $\sM$-enriched functor such that for any $\sM$-enriched category $C$, the induced map
\[   \Hom_{\h(\Cat(\sM))}(\u{L}(\sN),C)\ra\Hom_{\h(\Cat(\sM))}(\sN,C)  \]
is fully faithful and its essential image consists of those $\sM$-enriched functors which send each arrow in $W$ to an equivalence in $C$.  

Let $\sM$ be a left proper tractable cartesian model category and $\sN$ an $\sM$-enriched model category.  We will denote by $\sN^{f/c}$ the full subcategory of $\sN$ spanned by the fibrant-cofibrant objects.  The following is the very useful \textit{strictification theorem}.  

\begin{prop}\label{strictificationthm}
Let $\sM$ be a left proper tractable model category and $\sN$ a combinatorial $\sM$-enriched model category.  Let $C$ be a $\sM$-enriched category and $\sN^{C}$ be endowed with the projective model structure.  Then there exists an equivalence
\[  \u{L}(\sN^{C})\ra\RHom(C,\u{L}(\sN))    \]
of $\sM$-enriched categories.  
\end{prop}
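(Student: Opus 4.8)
The plan is to construct a comparison functor $\Phi:\u{L}(\sN^{C})\ra\RHom(C,\u{L}(\sN))$ from the universal property of the enriched localization, and then to prove it is an equivalence by establishing full faithfulness and essential surjectivity separately. Both sides are $\sM$-enriched categories: the right hand side is the derived internal Hom computed in the cartesian closed homotopy category $\h(\PC(\sM))$ supplied by Theorem~\ref{simpsonmodel}, and it is well defined because $C$ may be taken Reedy cofibrant while $\u{L}(\sN)$ is fibrant. The localization map $l:\sN\ra\u{L}(\sN)$ of $\sM$-enriched categories induces, by post-composition, a functor $\sN^{C}\ra\RHom(C,\u{L}(\sN))$. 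Since a projective (objectwise) weak equivalence of diagrams is carried by $l$ to an objectwise equivalence, this functor sends the weak equivalences of the projective model structure to equivalences; hence by the universal property characterizing $\u{L}(\sN^{C})$ it factors, uniquely up to equivalence, through the desired $\Phi$.

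First I would prove full faithfulness by computing the mapping objects on both sides and identifying them with the same derived end over $C$. On the left, for two projectively fibrant-cofibrant diagrams $F,G$, the enriched mapping object $\Map_{\u{L}(\sN^{C})}(F,G)$ is computed by the homotopy end $\int_{c\in C}\Map_{\u{L}(\sN)}(Fc,Gc)$, using that projective cofibrancy of $F$ together with fibrancy of $G$ forces the strict end to compute the derived one. On the right, the internal Hom adjunction in the cartesian closed category $\h(\PC(\sM))$ identifies $\Map_{\RHom(C,\u{L}(\sN))}(\Phi F,\Phi G)$ with the same end $\int_{c\in C}\Map_{\u{L}(\sN)}(Fc,Gc)$. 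Comparing the two, $\Phi$ induces an equivalence on all mapping objects and is therefore fully faithful.

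The essential surjectivity is the rectification step and is where the real work lies. An object of $\RHom(C,\u{L}(\sN))$ is a map $g:C\ra\u{L}(\sN)$ in $\h(\PC(\sM))$, that is, a weak (homotopy-coherent) $\sM$-functor, and I must produce a strict functor $C\ra\sN$ whose image under $\Phi$ is equivalent to $g$. The plan is to exploit the chain of Quillen equivalences
\[  \Cat(\sM)\xleftarrow{q}\PC(\sM)_{\s{P}}\xra{\id}\PC(\sM)_{\s{R}}\xra{\id}\PC(\sM)_{\s{I}}  \]
together with the combinatoriality of $\sN$: replacing $C$ by a cofibrant resolution $Q(C)\ra C$ in $\Cat(\sM)$ rigidifies coherent functors into strict ones, so that $g$ is represented by a strict functor $Q(C)\ra\sN$, which one then transports back along $Q(C)\ra C$ to a projectively cofibrant diagram on $C$, using that restriction along a cofibrant resolution of the indexing category is itself a Quillen equivalence on projective diagram categories. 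Checking that this transport is compatible with $l$, so that the resulting strict functor indeed maps to $g$, completes the argument.

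I expect the rectification step to be the main obstacle: matching mapping objects is essentially formal once the end formula is available, whereas producing a strict representative of an arbitrary coherent functor demands the resolution of the indexing $\sM$-category and the invariance of the localized projective diagram category under such resolutions. Granting these, $\Phi$ is fully faithful and essentially surjective, hence an equivalence of $\sM$-enriched categories.
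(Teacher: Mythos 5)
Your proposal is correct in outline but takes a genuinely different route from the paper. The paper never constructs a comparison functor nor checks full faithfulness and essential surjectivity separately: it quotes Lemma~6.2 of \cite{htdgc} (transported from complexes of $k$-modules to a general left proper tractable cartesian base), which says that $[C,\sN^{f/c}]\ra\h(\sN^{C})^{\tu{iso}}$ is a bijection, applies this with $C$ replaced by $C\times D$ for an arbitrary $\sM$-enriched category $D$, and then invokes the Yoneda lemma in $\h(\Cat(\sM))$ to conclude that $\RHom(C,\sN^{f/c})$ and $(\sN^{C})^{f/c}$ corepresent the same functor $D\mapsto[D,-]$; the identification $\u{L}(\sN)\simeq\u{L}(\sN^{c})\simeq\sN^{f/c}$ then finishes the argument. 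What that route buys is that a single rectification statement at the level of isomorphism classes of objects delivers both essential surjectivity (the case $D=*$) and the comparison of mapping objects (by varying $D$), with no need to compute homotopy ends or to resolve the indexing category. Your route is the more hands-on one and would also work, but note that the two inputs you are ``granting'' --- the homotopy-end formula for mapping objects in $\RHom(C,\u{L}(\sN))$, and the facts that coherent functors out of a cofibrant resolution $Q(C)$ rectify to strict ones and that restriction along $Q(C)\ra C$ is a Quillen equivalence of projective diagram categories --- are theorems of essentially the same depth as the lemma the paper cites, so a complete write-up along your lines would need to prove or reference the Segal-category analogues of the standard simplicial rectification results, at which point your argument is no shorter than the paper's.
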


\begin{proof}
Let $\h(\sN^{C})^{\tu{iso}}$ be the set of isomorphism classes of objects of $\h(\sN^{C})$.  By Lemma 6.2 of \cite{htdgc} (after making the admissible replacement of the monoidal model category of complexes of $k$-modules by an arbitrary left proper tractable cartesian model category), the map 
\[  [C,\sN^{f/c}]\ra\h(\sN^{C})^{\tu{iso}}  \] 
is an isomorphism where $[-,-]$ denotes the set of morphisms in the category $\h(\Cat(\sM))$.  Thus we have the following chain of isomorphisms
\[  [D,\RHom(C,\sN^{f/c})]\simeq[D\times C,\sN^{f/c}]\simeq(\h(\sN^{C\times D}))^{\tu{iso}}\simeq(\h((\sN^{C})^{D}))^{\tu{iso}}\simeq[D,(\sN^{C})^{f/c}] \]
for any $\sM$-enriched category $D$.  Since the construction is functorial in $D$, it results that there exists an equivalence
\[    \RHom(C,\sN^{f/c})\ra (\sN^{C})^{f/c}   \]
of $\sM$-enriched categories.  Let $\sN^{c}$ be the subcategory of $\sN$ spanned by the cofibrant objects.  The natural equivalences $Q:\sN\ra\sN^{c}$ and $R:\sN^{c}\ra\sN^{f/c}$ induce a chain of equivalences 
\[  \u{L}(\sN)\simeq\u{L}(\sN^{c})\simeq\u{L}(\sN^{f/c})\simeq\sN^{f/c}  \] 
between $\sM$-enriched categories and the result follows.
\end{proof} 

In Section~\ref{cg} we defined the $\inftynpo$-category 
\[    \uCatinfn:=\u{L}(\PC^n(\sSet))   \] 
of $\inftyn$-categories using the the localisation functor with respect to the $\PC^n(\sSet)$-enriched model structure on $\PC^n(\sSet)$.  The $\inftynpo$-category $\uCatinfn$ is equivalent to $\PC^n(\sSet)^{f/c}$.  The $\infty$-category $\uCat_{(\infty,0)}$ of $(\infty,0)$-categories was denoted $\S$.  The \textit{$\infty$-category of $\inftyn$-categories} is given by the localization 
\[  \bCat_{\inftyn}:=L(\PC^n(\sSet))    \] 
of the $\sSet$-enriched model category $\PC^n(\sSet)$.  

\begin{ex}\label{strictification}
Let $C$ be an $\inftyn$-precategory and $D$ a $\PC^{n-1}(\sSet)$-enriched category.  Assume that we are given an equivalence $q(C)\ra D$.  Then the induced map
\[   \u{L}(\PC^n(\sSet)^{D})\ra\RHom(C,\uCatinfn)  \]
is an equivalence of $\inftyn$-categories.
\end{ex}

The theory of $\inftyn$-categories behaves much like the theory of $\infty$-categories and with some care, one can develop all the tools available in that simpler setting.  However, apart from the definitions above, we will utilize only a small part of the theory.  Already from the definitions of this section though, we can deduce fundamental results like the $\inftyn$-Yoneda lemma with which we will conclude this section.

Let $X$ be an $\inftyn$-category and consider the functor
\[   \Pr_X:\PC^n(\sSet)^\circ\ra\Catinfn     \]               
sending an $\inftyn$-precategory $C$ to $\Pr_X(C)=\RHom(C^\circ,X)$.  The $\inftyn$-category $\Pr_{X}(C)$ will be called the $\inftyn$-category of \textit{$X$-valued prestacks} on $C$.  When $C$ is an $\inftyn$-precategory and $X$ is the $\inftyn$-category $\uCat_{(\infty,n-1)}$ of $(\infty,n-1)$-categories, we write $\Pr(C)$ for $\Pr_{\uCat_{(\infty,n-1)}}(C)$ and refer to $\Pr(C)$ as the $\inftyn$-category of \textit{prestacks} on $C$.  

Let $C$ be an $\inftyn$-precategory.  Then we can replace $C$ by a strict $\PC^{n-1}(\sSet)$-enriched category $D:=q(C)$.  Let $D^\circ\times D\ra\PC^{n-1}(\sSet)$ be the natural $\PC^{n-1}(\sSet)$-enriched bifunctor.  By adjunction this gives a map $D\ra\PC^{n-1}(\sSet)^{D^\circ}$ where the right hand side is equivalent to $\Pr(C)$ by the strictification theorem.  We will refer to the composition
\[    C\xras D\ra\PC^{n-1}(\sSet)^{D^\circ}\xras\Pr(C),  \]
which is well defined in $\h(\PC^n(\sSet))$, as the Yoneda embedding.  We conclude this section with the $\inftyn$-Yoneda Lemma.

\begin{prop}\label{yoneda}
Let $C$ be an $\inftyn$-precategory.  Then the Yoneda embedding 
\[   C\ra\Pr(C)   \] 
is fully faithful.
\end{prop}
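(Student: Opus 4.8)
The plan is to reduce Proposition~\ref{yoneda} to the classical enriched Yoneda lemma for strict $\PC^{n-1}(\sSet)$-enriched categories and then verify that passing to the enriched localization does not disturb the mapping objects between representables. Since $\Cat(\PC^{n-1}(\sSet))\xleftarrow{q}\PC^{n}(\sSet)_{\s{P}}$ is the left adjoint in a Quillen equivalence, the canonical map $C\to D:=q(C)$ is an equivalence of $\inftyn$-categories, and as full faithfulness is invariant under an equivalence of the source it suffices to treat the strict enriched category $D$. Applying the strictification theorem (Proposition~\ref{strictificationthm}) with $\sN=\PC^{n-1}(\sSet)$ and enriched category $D^\circ$ I would identify
\[ \Pr(C)=\RHom(D^\circ,\uCat_{(\infty,n-1)})\simeq\u{L}\big(\PC^{n-1}(\sSet)^{D^\circ}\big), \]
the enriched localization of the projective model structure on $\PC^{n-1}(\sSet)$-valued functors on $D^\circ$. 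Under this identification the Yoneda embedding becomes the localization $l$ applied to the \emph{strict} enriched Yoneda functor $D\to\PC^{n-1}(\sSet)^{D^\circ}$ arising from the bifunctor $D^\circ\times D\to\PC^{n-1}(\sSet)$, which sends an object $x$ to the representable $h_x:=D(-,x)$.

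The core computation is then to show that for objects $x,y$ in $D$ the mapping object $\Map_{\Pr(C)}(h_x,h_y)$ is equivalent to $D(x,y)\simeq\Map_C(x,y)$. Because $\Pr(C)\simeq\u{L}(\PC^{n-1}(\sSet)^{D^\circ})$, this mapping object is the derived enriched natural-transformation object (end), computed as $\uHom_{D^\circ}(h_x^{\mathrm{cof}},h_y^{\mathrm{fib}})$. The strict enriched Yoneda lemma already supplies a natural isomorphism $\uHom_{D^\circ}(h_x,G)\cong\mathrm{ev}_x(G)=G(x)$ for every functor $G$, and I would promote this to the derived statement by checking two points. First, evaluation $\mathrm{ev}_x\colon\PC^{n-1}(\sSet)^{D^\circ}\to\PC^{n-1}(\sSet)$ is right Quillen for the projective model structure and preserves all (objectwise) weak equivalences, so $\uHom_{D^\circ}(h_x,h_y^{\mathrm{fib}})\cong h_y^{\mathrm{fib}}(x)\simeq D(x,y)$. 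Second, the representable $h_x=D(-,x)\otimes *$ is projectively cofibrant: since $\PC^{n-1}(\sSet)$ is a cartesian model category its terminal (hence monoidal unit) object $*$ is cofibrant, so tensoring a corepresentable with it produces a projective cell and no cofibrant replacement of $h_x$ is needed. Combining these gives $\Map_{\Pr(C)}(h_x,h_y)\simeq D(x,y)$, which is exactly full faithfulness.

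The main obstacle I anticipate is precisely this bookkeeping guaranteeing that the strict enriched end computes the homotopy end, namely the cofibrancy of representables and the right Quillen behaviour of evaluation in the projective model structure on $\PC^{n-1}(\sSet)$-enriched functors. This is where the hypothesis that a cartesian model category has cofibrant terminal object is used essentially, since it forces the monoidal unit to be cofibrant and hence the representables to be cofibrant. One must also keep track of the inductive structure, as $\PC^{n-1}(\sSet)$ is itself defined recursively; but the properties invoked (left properness, tractability, cartesianness, and cofibrancy of the unit) are preserved at each stage by Theorem~\ref{simpsonmodel}, so the argument runs uniformly in $n$ and bottoms out at the ordinary simplicial Yoneda lemma when $n=0$.
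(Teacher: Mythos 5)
Your proof is correct and follows essentially the same route as the paper: strictify $C$ to $D=q(C)$, use the strictification theorem to identify $\Pr(C)$ with the enriched localization of the projective model structure on $\PC^{n-1}(\sSet)$-valued enriched functors, and reduce full faithfulness to the classical enriched Yoneda lemma. Your version additionally spells out the homotopical bookkeeping that the paper leaves implicit (projective cofibrancy of the representables via cofibrancy of the terminal object, and evaluation being right Quillen), which is a welcome refinement rather than a different argument.
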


\begin{proof}  
Every $\inftyn$-precategory $C$ can be associated with a $\PC^{n-1}(\sSet)$-enriched category $D:=q(C^\circ)$.  Let $E$ be a fibrant replacement of $D$ and $\PC^{n-1}(\sSet)^{E}$ be endowed with the projective model structure.  The Yoneda embedding can be written as the following composition of maps
\[ C\xra{F}p((\PC^{n-1}(\sSet)^{E})^{f/c})\xra{j}\RHom(C^\circ,p(\PC^{n-1}(\sSet)^{f/c}))\xras\RHom(C^\circ,\uCatinfnmo).  \]
Since $p(\PC^{n-1}(\sSet)^{f/c})$ is an $\inftyn$-category, the object $\RHom(C^\circ,p(\PC^{n-1}(\sSet)^{f/c}))$ can be identified with an exponential object $[p(\PC^{n-1}(\sSet)^{f/c})]^{[C^\circ]}$ in $\h(\PC^n(\sSet))$.  

Using the equivalence 
\[  \h(\PC^n(\sSet))\xras\h(\Cat(\PC^{n-1}(\sSet))), \]
the map $j$ is an equivalence from Example~\ref{strictification}.  We then apply the adjoint to $F$ and factor it as 
\[ q(C)\xras E^\circ\ra (\uCatinfnmo)^{E^\circ}. \]
It remains to show that the second map of $\Catinfnmo$-enriched categories is fully faithful.  This follows from the classical enriched Yoneda lemma.
\end{proof}

\section{Monoidal $\inftyn$-categories}\label{monoidal}

In this section we define the $\inftynpo$-category of symmetric monoidal $\inftyn$-categories.  This notion will be used from Section~\ref{cg} onwards.  To do so, we work in greater generality so that results may be utilized in other contexts by the reader.        

Let $\Gamma$ denote the category of pointed finite ordinals and point preserving maps.  This is equivalent to the category of all linearly ordered finite sets with a distinguished point $*$.  We denote the pointed ordinal $n\coprod \{*\}$ by $[n]$.  The category $\Gamma$ is a monoidal category with monoidal structure $(\Gamma,\vee,[0])$.  Consider the $n$ pointed maps $p_i:[n]\ra [1]$ in $\Gamma$ given by $p_i(j)=\{j\}$ if $i=j$ and $p_i(j)=*$ otherwise.  

\begin{dfn}\label{segalmonoidobject}
Let $\sM$ be a model category.  A \textit{commutative Segal monoid object} in $\sM$ is a functor $A:\Gamma\ra\sM$ such that for each $n\geq 0$, the map
\[   \prod_{1\leq i\leq n}A(p_{i}):A([n])\ra A([1])^n   \] 
is a weak equivalence in $\sM$.  
\end{dfn}

Let $\sM$ be a left proper tractable cartesian model category and $\sN$ a $\sM$-enriched model category.  We would like to construct a $\sM$-enriched model category whose fibrant objects are precisely the commutative Segal monoid objects in $\sN$.  This is a straightforward generalization of the corresponding result in \cite{rig} using the theory of enriched (left) Bousfield localization. 

Enriched Bousfield localization enables us, under some conditions, to deduce a new model structure on an enriched model category with an enlarged collection of weak equivalences whilst retaining the same cofibrations.  More precisely, let $\sM$ be a left proper tractable symmetric monoidal model category, $\sN$ a $\sM$-enriched model category and $S$ a collection of morphisms in $\h(\sN)$.  An object $z$ in $\h(\sN)$ is said to be \textit{$S$-local} if, for every arrow $f:x\ra y$ in $S$, the induced map
\[    \RHom(y,z)\ra\RHom(x,z)    \]
is an isomorphism in $\h(\sM)$.  An object $z$ in $\sN$ is said to be $S$-local if its image in $\h(\sN)$ is $S$-local.  A morphism $f:x\ra y$ in $\h(\sN)$ is said to be a \textit{$S$-equivalence} if, for every $S$-local object $z$ in $\h(\sM)$, the induced map
\[   \RHom(y,z)\ra\RHom(x,z)    \]
is an isomorphism in $\h(\sM)$.  An arrow $f$ in $\sN$ is said to be a $S$-equivalence if its image in $\h(\sN)$ is an $S$-equivalence.

We state the following result in the more general, not necessarily cartesian, setting.  Let $\sM$ be a symmetric monoidal model category.  An $\sM$-enriched category $\sN$ is said to be a \textit{$\sM$-enriched model category} if it is equipped with a model structure such that the category $\sN$ is tensored and cotensored over $\sM$ and the tensor product functor $\otimes:\sM\times\sN\ra\sN$ is a left Quillen bifunctor.  

\begin{thm}[\cite{Ba}]\label{barthm}
Let $\sM$ be a left proper tractable symmetric monoidal model category and $\sN$ a left proper tractable $\sM$-enriched model category.  Let $S$ be a collection of morphisms in $\h(\sN)$.  Then there exists a left-proper tractable $\sM$-enriched model category $\u{L}^b_S(\sN)$ where~:
\begin{itemize}
\item[($\sC$)] The cofibrations in $\u{L}^b_S(\sN)$ coincide with the cofibrations in $\sN$.
\item[($\sW$)] The weak equivalences in $\u{L}^b_S(\sN)$ are the $S$-equivalences.
\end{itemize}
The fibrant objects in $\u{L}^b_S(\sN)$ coincide with objects of $\sN$ which are both $S$-local and fibrant in $\sN$.
\end{thm}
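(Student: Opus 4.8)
The plan is to reduce the enriched statement to the existence of an ordinary combinatorial left Bousfield localization, using the tensor--cotensor adjunction to bridge the two. First I would choose a set $\mathcal{G}$ of cofibrant homotopy generators of $\sM$, that is, a set of cofibrant objects whose derived mapping spaces jointly detect weak equivalences in $\sM$; such a set exists because $\sM$ is combinatorial, hence locally presentable. I then form the set
\[  \Lambda:=\{\, c\otimes f\ :\ c\in\mathcal{G},\ f\in S\,\}  \]
of morphisms of $\sN$, where $\otimes$ denotes the (derived) left Quillen action of $\sM$ on $\sN$. The key observation is that, by the derived adjunction $\Map(c\otimes x,z)\simeq\Map(c,\RHom(x,z))$ together with the fact that $\mathcal{G}$ detects weak equivalences in $\sM$, an object $z$ is $\Lambda$-local in the usual sense if and only if the map $\RHom(y,z)\ra\RHom(x,z)$ is an isomorphism in $\h(\sM)$ for every $f:x\ra y$ in $S$, that is, if and only if $z$ is $S$-local in the sense defined above. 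Consequently the $\Lambda$-local equivalences coincide with the $S$-equivalences.

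Second, I would invoke the recognition theorem of J.~Smith for combinatorial model categories (see \cite{L1}, and \cite{Ba} for the tractable refinement): since $\sN$ is tractable and left proper, the ordinary left Bousfield localization $L_\Lambda\sN$ exists, is again tractable and left proper, has the same cofibrations as $\sN$, has the $\Lambda$-local equivalences as its weak equivalences, and has the $\Lambda$-local fibrant objects as its fibrant objects. Transporting along the identification of the previous paragraph, I obtain a left proper tractable model structure $\u{L}^b_S(\sN)$ whose cofibrations are those of $\sN$, giving $(\sC)$; whose weak equivalences are the $S$-equivalences, giving $(\sW)$; and whose fibrant objects are exactly the objects that are both $S$-local and fibrant in $\sN$, as claimed.

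Third, it remains to verify that $\u{L}^b_S(\sN)$ is an $\sM$-enriched model category, that is, that the action $\otimes\colon\sM\times\u{L}^b_S(\sN)\ra\u{L}^b_S(\sN)$ is still a left Quillen bifunctor. Because the cofibrations are unchanged, the pushout-product of two cofibrations is a cofibration exactly as in $\sN$, so only the case in which one factor is a trivial cofibration is new. By the enriched adjunction this reduces to showing that for every fibrant $S$-local object $z$ and every cofibrant $c\in\sM$ the cotensor $z^{c}$ is again $S$-local. But for $f:x\ra y$ in $S$ one has a natural identification $\RHom(x,z^{c})\simeq\uHom(c,\RHom(x,z))$ in $\h(\sM)$, and since $z$ is $S$-local the map $\RHom(y,z)\ra\RHom(x,z)$ is an isomorphism there; as $\uHom(c,-)$ is right Quillen for $c$ cofibrant, it preserves this isomorphism, whence $z^{c}$ is $S$-local. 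It is precisely the formulation of locality through the derived enriched Hom $\RHom$ that renders this compatibility automatic.

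The main obstacle is concentrated in the second step, namely the verification of Smith's hypotheses for $\Lambda$: one must show that the class of $\Lambda$-local (equivalently $S$-) equivalences is an accessible and accessibly embedded subcategory of the arrow category $\sN^{[1]}$ satisfying the two-out-of-three property, and that left properness of $\sN$ ensures its intersection with the cofibrations is closed under pushout and transfinite composition. The accessibility argument --- establishing that the $S$-local objects form an accessible, accessibly embedded full subcategory, using that each $\RHom(x,-)$ commutes with sufficiently filtered colimits and that $S$-locality is cut out by accessible conditions --- is the genuine technical heart and is the content carried out in \cite{Ba}; everything else is a formal consequence of the tensor--cotensor adjunction and the standard theory of combinatorial left Bousfield localization.
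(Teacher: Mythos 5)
The paper offers no proof of this statement: it is quoted directly from Barwick \cite{Ba} (his enriched left Bousfield localization theorem), so there is no internal argument to compare against. Your proposal is a correct sketch of essentially the proof Barwick himself gives: generate an ordinary localizing set $\Lambda$ by tensoring $S$ with a set of cofibrant homotopy generators of $\sM$, invoke Smith's recognition theorem for combinatorial left Bousfield localizations to produce the underlying model structure, and verify the pushout-product axiom for the $\sM$-action by showing that cotensors by cofibrant objects preserve $S$-local objects. One ordering point worth fixing: the identification of the $\Lambda$-local equivalences with the $S$-equivalences is not a purely formal consequence of the identification of the local objects, since the two notions are tested against different mapping objects (simplicial mapping spaces versus the $\sM$-valued $\RHom$); the nontrivial direction already uses the fact, which you only establish in your third step, that $z^{c}$ is $S$-local whenever $z$ is $S$-local and $c$ is cofibrant, via $\Map(c\otimes b,z)\simeq\Map(b,z^{c})$. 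Proving that cotensor lemma first removes the circularity, and with that rearrangement the argument is complete modulo the accessibility verification that you rightly identify as the technical core carried out in \cite{Ba}.
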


This result can be used to construct an enriched model category of symmetric monoidal $\inftyn$-categories.  For two morphisms $f:x\ra y$ and $g:z\ra w$ in a symmetric monoidal model category $\sM$, we will denote by
\[      f\square g:=(y\otimes z)\coprod_{x\otimes z}(x\otimes w)\ra y\otimes w  \]
the pushout product of $f$ and $g$.

\begin{prop}\label{smom}
Let $\sM$ be a left proper tractable cartesian model category and $\sN$ a left proper tractable $\sM$-enriched model category.  There exists a left proper tractable $\sM$-enriched model structure on $\sN^{\Gamma}$ satisfying the following~:
\begin{enumerate}
\item The cofibrations are those in the projective model structure on $\sN^{\Gamma}$.
\item The weak equivalences are those morphisms $f:X\ra Y$ for which
\[    \RHom(Y,A)\ra\RHom(X,A)   \]
is an isomorphism in $\h(\sM)$ for any commutative Segal monoid object $A$ in $\sN$.
\item The fibrant objects are commutative Segal monoid objects in $\sN$ which are projectively fibrant.
\end{enumerate}
\end{prop}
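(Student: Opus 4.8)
The plan is to realize the desired model structure as an enriched left Bousfield localization of the projective model structure on $\sN^\Gamma$, localizing at a set of maps that forces the Segal condition of Definition~\ref{segalmonoidobject}. First I would record that the projective model structure on $\sN^\Gamma$ exists and inherits the required properties: since $\sN$ is left proper tractable (hence combinatorial) and $\Gamma$ is a small category, the levelwise (projective) fibrations and weak equivalences determine a left proper tractable model structure on $\sN^\Gamma$, and the objectwise tensor, cotensor and enriched hom make it a $\sM$-enriched model category. This is the input to which Theorem~\ref{barthm} will be applied, with $\sM$ (which is cartesian, hence symmetric monoidal) as the enriching category.

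Next I would construct the localizing set $S$. For each object $c\in\Gamma$ the evaluation functor $\tu{ev}_c:\sN^\Gamma\ra\sN$ admits a left adjoint $F_c$, given on objects by the copower $F_c(X)(d)=\coprod_{\Gamma(c,d)}X$, which preserves cofibrant objects for the projective structure. A morphism $c'\ra c$ in $\Gamma$ induces $F_c\ra F_{c'}$ by precomposition, so the maps $p_i\colon [n]\ra[1]$ assemble into
\[   s_n^X:\coprod_{1\le i\le n}F_{[1]}(X)\ra F_{[n]}(X).   \]
Fixing a set $G$ of cofibrant objects of $\sN$ that detects weak equivalences between fibrant objects (for instance the domains and codomains of a generating set of cofibrations, which are cofibrant by tractability), I would set $S:=\{\,s_n^X : n\ge 0,\ X\in G\,\}$. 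Applying Theorem~\ref{barthm} to $\sN^\Gamma$ and $S$ then produces the model category $\u{L}^b_S(\sN^\Gamma)$ whose cofibrations coincide with the projective cofibrations, establishing item (1), and whose fibrant objects are the projectively fibrant, $S$-local objects.

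It remains to identify the $S$-local objects. By the adjunction $F_c\dashv\tu{ev}_c$ there are natural identifications $\RHom(F_{[n]}(X),A)\simeq\RHom(X,A([n]))$ and $\RHom(\coprod_i F_{[1]}(X),A)\simeq\RHom(X,A([1]))^n$, under which $\RHom(s_n^X,A)$ becomes $\RHom(X,-)$ applied to the Segal map $A([n])\ra A([1])^n$. Hence a (projectively fibrant) object $A$ is $S$-local precisely when, for every $n$ and every $X\in G$, this map is sent to an isomorphism in $\h(\sM)$; since $G$ detects weak equivalences, this is exactly the condition that each Segal map be a weak equivalence, i.e. that $A$ be a commutative Segal monoid object. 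This gives item (3). Item (2) is then immediate: the weak equivalences of $\u{L}^b_S(\sN^\Gamma)$ are by construction the $S$-equivalences, namely those $f:X\ra Y$ for which $\RHom(Y,A)\ra\RHom(X,A)$ is an isomorphism for all $S$-local $A$, which by the previous identification is exactly the class described in (2). The main obstacle I anticipate is the clean verification that the projective functor category is again left proper and tractable as an $\sM$-enriched model category, and, within the identification of $S$-local objects, the point that the enriched mapping objects out of $G$ genuinely detect weak equivalences homotopically; once these are in place the remaining adjunction bookkeeping is routine.
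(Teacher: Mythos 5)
Your overall strategy is the paper's: put the projective $\sM$-enriched model structure on $\sN^{\Gamma}$ and apply the enriched Bousfield localization of Theorem~\ref{barthm} at a set of maps that forces the Segal condition, then identify the local objects. The difference is in the localizing set, and that difference is where your argument has a real gap. The paper takes $S:=\{f\square g_n\}_{f\in I,\,n\ge 0}$, the pushout products of the generating cofibrations of $\sN$ with the maps $g_n:\coprod_{1\le i\le n}h^1\ra h^n$ between corepresentables. Locality against $f\square g_n$ unwinds, by adjunction, to the statement that
\[
\RHom(Y,A([n]))\ra\RHom(Y,A([1])^n)\times_{\RHom(X,A([1])^n)}\RHom(X,A([n]))
\]
is an isomorphism in $\h(\sM)$ for every generating cofibration $f:X\ra Y$, i.e.\ that $A([n])\ra A([1])^n$ has the homotopy right lifting property against all cofibrations; between fibrant objects this characterizes weak equivalences, so the identification of $S$-local fibrant objects with Segal objects is immediate and needs no auxiliary detecting set.

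You instead localize at the copowers $s_n^X=g_n\otimes X$ for $X$ in a set $G$, and your identification of local objects requires that the functors $\RHom(X,-)$, $X\in G$, jointly reflect weak equivalences between fibrant objects of $\sN$. Your proposed choice of $G$ --- the domains and codomains of the generating cofibrations --- is not known to have this property in an arbitrary left proper tractable $\sM$-enriched model category; cofibrancy of these objects (which tractability does give) is not the same as their being homotopy generators, and nothing in the hypotheses supplies such a set. You correctly flagged this as the weak point. It can be repaired either by invoking the existence of a set of cofibrant homotopy generators in any combinatorial model category (e.g.\ via a presentation as a localization of a presheaf category) or, more economically, by replacing your set with the pushout-product set as above, which converts the needed ``detection'' into the standard lifting characterization of weak equivalences and brings your proof into line with the paper's. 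The rest of your argument --- existence and left properness/tractability of the projective structure, preservation of cofibrations under localization, and the adjunction bookkeeping identifying $\RHom(F_{[n]}(X),A)$ with $\RHom(X,A([n]))$ --- is sound and matches what the paper does implicitly.
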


\begin{proof}
It is clear from the proposition that we need to apply a Bousfield localisation to the projective model structure on $\sN^\Gamma$ whose weak equivalences are taken levelwise.  Therefore, the first condition is satisfied.  Since $\sN$ is combinatorial, we have at our disposal a collection $I$ of generating cofibrations.  Denote by $g_n:\coprod_{1\leq i\leq n}h^1\ra h^n$ the map between corepresentable functors induced by the morphisms $p_i:[n]\ra [1]$.  Let $S$ denote the collection of pushout products
\[     S:=\{f\square g_n\}_{f\in I,n\geq 0}     \]
for all generating cofibrations $f$ in $I$ and $n\geq 0$.

By Theorem~\ref{barthm}, there exists a left proper tractable $\sM$-enriched model category $\u{L}^b_S(\sN^\Gamma)$ whose cofibrations are projective cofibrations and whose weak equivalences are $S$-equivalences.  So it remains to show that the $S$-local objects are commutative Segal monoid objects.  For any generating cofibration $f:X\ra Y$ in $I$ we have a diagram
\begin{diagram}
\RHom(Y\otimes h^n,A)  &\rTo &\RHom(Y\otimes\amalg_{1\leq i\leq n}h^1,A)\coprod_{\RHom(X\otimes\amalg_{1\leq i\leq n}h^1,A)} \RHom(X\otimes h^n,A)\\
\dTo_{\sim}    &  &\dTo_{\sim}\\
\RHom(Y,A([n]))  &\rTo  &\RHom(Y,A([1])^n)\times_{\RHom(X,A([1])^n)}\RHom(X,A([n]))  
\end{diagram}
where the top arrow is an equivalence by definition for any $n\geq 0$.  Therefore the bottom arrow is an equivalence and since the arrows in $I$ generate $\sN^\Gamma$ we have an equivalence
\[    A([n])\ra A([1])^n  \]
in $\sN$.  Thus the remaining conditions are satisfied.
\end{proof}

We denote by $\SeMon(\sN)_{\s{S}}$ the $\sM$-enriched model category provided by Proposition~\ref{smom} and call it the \textit{special} model structure following \cite{Sc2}.  

Let $\SeMon(\sN)$ denote the full subcategory of $\RHom(\Gamma,\sN)$ spanned by the commutative Segal monoid objects.  Then from Proposition~\ref{strictificationthm} in  we can deduce equivalences
\[  \u{L}(\SeMon(\sN))_{\s{S}}\simeq(\SeMon(\sN)_{\s{S}})^{f/c}\simeq\SeMon(\sN^{f/c}) \]
in $\Cat(\sM)$.

\begin{dfn}
A \textit{symmetric monoidal $\inftyn$-category} is a commutative Segal monoid object in the model category $\PC^n(\sSet)$ of $\inftyn$-categories.  
\end{dfn}

The underlying $\inftyn$-category of a symmetric monoidal $\inftyn$-category $A$ is given by $A([1])$.  A symmetric monoidal functor between two symmetric monoidal $\inftyn$-categories is simply a natural transformation of functors.    

Since $\PC^n(\sSet)$ is a left proper tractable $\PC^n(\sSet)$-enriched model category, the model category 
\[     \Catinfn^\otimes:=\SeMon(\PC^n(\sSet))_{\s{S}}    \]
is the $\PC^n(\sSet)$-enriched model category of symmetric monoidal $\inftyn$-categories.  We now use the enriched localization functor to define the $\inftynpo$-category
\[   \uCatinfn^\otimes:=\u{L}(\Catinfn^\otimes)  \] 
of symmetric monoidal $\inftyn$-categories.  

Note that, from Proposition~\ref{strictificationthm}, there exists an equivalence
\[      \RHom(\Gamma,\PC^n(\sSet)^{f/c})\ra (\PC^n(\sSet)^{\Gamma})^{f/c}    \]
of $\PC^n(\sSet)$-enriched categories.  This important strictification result enables us to consider the $\inftynpo$-category of symmetric monoidal $\inftyn$-categories as ordinary functors into $\PC^n(\sSet)$ (as in the right hand side) as opposed to the much less explicit description of functors into some fibrant replacement of $\PC^n(\sSet)$.  When we are only interested in the non-invertibility of 1-morphisms, we will use the $\infty$-category
\[   \bCatinfn^\otimes:=L(\Catinfn^\otimes)  \] 
where we consider $\SeMon(\PC^n(\sSet))_{\s{S}}$ with its natural simplicial enrichment.

The model category $\PC^n(\sSet))$ with the Reedy model structure is a combinatorial symmetric monoidal model category which is freely powered (see Definition~4.5.4.2 of \cite{L2}).  Therefore, by Proposition~4.5.4.6 of \textit{loc.cit.}, there exists a combinatorial model structure on the category $\CMon(\PC^n(\sSet))$ of commutative monoid objects in $\PC^n(\sSet)$ where a morphism is a fibration (resp. weak equivalence) if it is a fibration (resp. weak equivalence) in the model category of $\inftyn$-precategories under the forgetful functor 
\[   \theta:\CMon(\PC^n(\sSet))\ra\PC^n(\sSet).   \]
It then follows from Theorem~4.5.4.7 of \textit{loc.cit.} that there exists an equivalence 
\[      {L}(\CMon(\PC^n(\sSet)))\ra\bCMon(\bCatinfn)      \]
of $\infty$-categories where the right hand side denotes the $\infty$-category of commutative monoid objects in the symmetric monoidal $\infty$-category $\bCatinfn$.

We now consider the important case of $\inftyn$-categories of modules.  Many examples of $\inftyn$-categories of modules arise from the localisation of enriched model categories of modules.  More precisely, let $\sM$ be a combinatorial symmetric monoidal model category and $R$ a commutative monoid object of $\sM$.  Assume that $\sM$ satisfies the monoid axiom \cite{SS}.  Then the category $\Mod_{R}(\sM)$ of $R$-modules admits a combinatorial model structure where a map is a fibration if and only if it is a fibration in $\sM$ and a weak equivalence if and only if it is a weak equivalence in $\sM$.  

Let $\sM$ be a symmetric monoidal model category such that the tensor product bifunctor preserves weak equivalences.  Then we can associate to $\sM$ a symmetric monoidal $\infty$-category $L^\otimes(\sM)$ (see \cite{rig} for details).  The underlying $\infty$-category of $L^\otimes(\sM)$ is simply $L(\sM)$.  

We will also need an enriched monoidal localization functor.  Let $C$ be a symmetric monoidal category and $D$ a $C$-enriched category.  A symmetric monoidal structure on $D$ is said to be \textit{weakly compatible} with the $C$-enriched structure on $D$ if the bifunctor $\otimes:D\times D\ra D$ admits the structure of a $C$-enriched functor which is compatible with the commutativity, associativity and unit constraints of $(D,\otimes)$.  

\begin{dfn}
Let $\sM$ be a symmetric monoidal model category.  A symmetric monoidal category $\sN$ is said to be a \textit{symmetric monoidal $\sM$-enriched model category} if $\sN$ is an $\sM$-enriched model category with a weakly compatible closed symmetric monoidal structure such that the natural map
\[     \RHom(Z,X^Y)\ra\RHom(Z\otimes Y, Z)  \]
induced by $ev:X^Y\otimes Y\ra X$ is an isomorphism in $\h(\sM)$ for all $X,Y,Z\in\sN$.
\end{dfn}

Let $\sN$ be a symmetric monoidal $\PC^{n}(\sSet)$-enriched category thought of as a commutative Segal monoid object in the model category $\Cat(\PC^{n}(\sSet))^\Gamma$ with weak equivalences $W$.  We consider this object as a commutative Segal monoid object in the model category $\PC^{n+1}(\sSet)$ using the fully faithful functor
\[    p:\Cat(\PC^n(\sSet))\ra\PC(\PC^{n}(\sSet))  \]
of Example~\ref{cattoprecat}.

Let $\sN^c$ denote the full subcategory of $\sN$ spanned by the cofibrant objects which are stable under the symmetric monoidal structure.  We will denote by 
\[  \coprod_W\Delta:\Gamma\ra\Cat(\sSet)\subset\PC(\PC^{n}(\sSet))  \]
the functor sending $[n]$ to $\coprod_W\Delta^n$.  We will denote by $\u{L}^\otimes\sN^c$ the pushout
\begin{diagram}
\coprod_W\Delta  &\rTo  &\sN^c\\
\dTo         &     &\dTo\\
\coprod_W * &\rTo  &\u{L}_W^\otimes \sN^c
\end{diagram}
in the model category $\Cat_{(\infty,n+1)}^\otimes$ of symmetric monoidal $(\infty,n+1)$-categories.  This $\inftynpo$-category will be denoted
\[    \u{L}^\otimes\sN:=\u{L}_W^\otimes \sN^c.     \]
The underlying $\inftynpo$-category of $\u{L}^\otimes(\sN)$ is the enriched localization $\u{L}(\sN)$.

The enriched symmetric monoidal localization just described satisfies the following universal property.  For any symmetric monoidal $\inftynpo$-category $B$, composition with $\sN\ra\u{L}^\otimes{\sN}$ induces a fully faithful functor
\[  \u{\tu{Fun}}^\otimes(\u{L}^\otimes{\sN},B)\ra\u{\tu{Fun}}^\otimes(\sN,B)   \]
whose essential image consists of those symmetric monoidal functors $F:\sN\ra B$ which send each arrow of $W$ in $\sN([1])$ to an equivalence in $B([1])$.  Here $\u{\tu{Fun}}^\otimes(-,-)$ denotes the internal hom $\RHom(-,-)$ of symmetric monoidal functors.

\begin{prop}\label{symmonmod}
Let $\sM$ be a left proper tractable symmetric monoidal model category and $\sN$ a left proper tractable symmetric monoidal $\sM$-enriched model category which is freely powered.  Let $R$ be a commutative monoid object of $\sN$.  Then $\Mod_R(\sN)$ is endowed with a symmetric monoidal $\sM$-enriched model structure.
\end{prop}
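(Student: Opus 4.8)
The plan is to produce the required structure on $\Mod_R(\sN)$ in three layers: the underlying model structure, the symmetric monoidal structure arising from the commutativity of $R$, and the $\sM$-enrichment together with its weak compatibility. Throughout, the guiding principle is that fibrations and weak equivalences in $\Mod_R(\sN)$ are detected by the forgetful functor $U\colon\Mod_R(\sN)\ra\sN$, so that each homotopical condition can be checked, and reduced to, the corresponding condition in $\sN$.

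First I would produce the underlying model structure. Since $\sN$ is left proper tractable it is combinatorial, and the freely powered hypothesis ensures the monoid axiom of \cite{SS}; hence the module model structure recalled immediately above applies with $\sN$ in place of $\sM$, giving a combinatorial model structure on $\Mod_R(\sN)$ in which a map is a fibration (resp. weak equivalence) precisely when its image under $U$ is one in $\sN$. The generating (trivial) cofibrations are the images $R\otimes f$ of the generating (trivial) cofibrations $f$ of $\sN$ under the left adjoint $R\otimes(-)$ to $U$.

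Next, because $R$ is a commutative monoid, the relative tensor product $\otimes_R$ equips $\Mod_R(\sN)$ with a closed symmetric monoidal structure with unit $R$ and internal hom $\uHom_R$. For the pushout-product axiom I would reduce to $\sN$: for cofibrations $f,g$ of $\sN$ the relative pushout-product $(R\otimes f)\square_R(R\otimes g)$ is naturally isomorphic to $R\otimes(f\square g)$, since $R\otimes(-)$ preserves colimits and $(R\otimes A)\otimes_R(R\otimes B)\simeq R\otimes(A\otimes B)$; as a general cofibration of $\Mod_R(\sN)$ is a retract of a transfinite composite of pushouts of such free-module maps, the axiom follows from the pushout-product axiom in $\sN$. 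The unit axiom is the delicate point, since $R$ need not be cofibrant: one must compare $QR\otimes_R(-)$ with the identity, and here the freely powered hypothesis is invoked to control the cofibrancy of free $R$-modules and to guarantee that $\otimes_R$ computes the derived tensor product on cofibrant objects. This yields the symmetric monoidal model structure.

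Finally I would install the $\sM$-enrichment. As $\sN$ is tensored and cotensored over $\sM$ via a left Quillen bifunctor $\otimes\colon\sM\times\sN\ra\sN$ whose monoidal structure is weakly compatible with the enrichment, for $m\in\sM$ and $P\in\Mod_R(\sN)$ the object $m\otimes P$ acquires a canonical $R$-action (the associativity and symmetry constraints of the weak compatibility identify $R\otimes(m\otimes P)\simeq m\otimes(R\otimes P)$), giving a tensoring of $\Mod_R(\sN)$ over $\sM$, with cotensoring obtained by adjunction; its left Quillen bifunctor property reduces along $U$ to that in $\sN$. Since $\otimes_R$ is a colimit of $\otimes$ and $\sM$-tensoring commutes with that colimit, the $\sM$-enriched structure on $\otimes$ descends to $\otimes_R$, giving weak compatibility, and the adjunction isomorphism $\RHom(Z,\uHom_R(Y,X))\cong\RHom(Z\otimes_R Y,X)$ in $\h(\sM)$ is inherited from $\sN$ by transporting internal homs through $U$ and the $(\otimes_R,\uHom_R)$-adjunction. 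The main obstacle is thus the unit axiom, and more broadly ensuring that the symmetric (not merely monoidal) structure is homotopically well behaved when $R$ is not cofibrant: this is exactly where the freely powered hypothesis does its work, controlling the symmetric-group actions on tensor powers of free $R$-modules so that $\otimes_R$ descends to a genuine symmetric monoidal structure on the homotopy category.
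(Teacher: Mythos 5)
Your proposal is correct and follows essentially the same route as the paper: the module model structure is imported from Theorem~4.1 of Schwede--Shipley, and the $\sM$-enriched (co)tensoring together with its Quillen bifunctor property is verified by reduction along the forgetful functor to $\sN$, which detects fibrations and weak equivalences. You supply more detail on the pushout-product and unit axioms for $\otimes_R$ than the paper does (it leaves these to the citation), and you build the tensoring directly from the canonical $R$-action on $m\otimes P$ where the paper obtains it from the adjoint functor theorem; these are cosmetic differences.
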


\begin{proof}
The model structure can be deduced from Theorem~4.1 of \cite{SS}.  A morphism in $\Mod_R(\sN)$ is a weak equivalence (resp. fibration) if and only if it is a weak equivalence (resp. fibration) in $\sN$.  For the compatible enrichment, we know that the categories $\sN$ and $\Mod_R(\sN)$ are cotensored over $\sM$.  Let 
\[     f:\Mod_R(\sN)\ra\sN  \]
be the forgetful functor.  Then there exist canonical isomorphisms $f(M^X)\simeq f(M)^X$ for $M\in\Mod_R(\sN)$ and $X\in\sM$.  Thus to prove the remainder of the proposition it suffices to verify the following two conditions.
\begin{enumerate}
\item The model category $\Mod_R(\sN)$ is tensored over $\sM$.
\item Given a fibration $i:M\ra N$ in $\Mod_R(\sN)$ and a cofibration $j:X\ra Y$ in $\sM$, the induced map
\[     i\wedge j:M^Y\ra M^X\times_{N^X}N^Y   \]
is a fibration in $\Mod_R(\sN)$.  Moreover, if either $i$ or $j$ is a trivial fibration, then $i\wedge j$ is also.
\end{enumerate}
The first claim follows from the fact that the map from $(-)^X:\Mod_R(\sN)\ra\Mod_R(\sN)$ admits a left adjoint from the adjoint functor theorem.  The second claim follows from the fact that the forgetful functor $f$ detects (trivial) fibrations.
\end{proof}

The example of interest which utilizes Proposition~\ref{symmonmod} is the case where $\sN$ is the monoidal model category $\PC^n(\sSet)$ of $\inftyn$-precategories.  Then given any commutative monoid object $R$ in $\PC^n(\sSet)$, ie. any symmetric monoidal $\inftyn$-precategory, then $\Mod_R(\PC^n(\sSet))$ is a symmetric monoidal $\PC^n(\sSet)$-enriched model category.  Taking the localization of this model category, we obtain the symmetric monoidal $\inftynpo$-category $\u{L}^\otimes(\Mod_R(\PC^n(\sSet)))$ of $R$-modules.

\newpage


\end{document}